\documentclass[10pt]{article}

\usepackage[utf8]{inputenc}

\usepackage{url}

\usepackage[hidelinks,breaklinks]{hyperref}

\usepackage{geometry}
\geometry{a4paper}
\usepackage[english]{babel}
\setlength{\parindent}{1em}

\usepackage[backend=biber,style=alphabetic]{biblatex}
\addbibresource{sources.bib}

\usepackage{mathtools}
\usepackage{amsmath}
\usepackage{amssymb}
\usepackage{mathrsfs}
\usepackage{dsfont}
\usepackage{graphicx}
\usepackage{booktabs}
\usepackage{array}
\usepackage{paralist}
\usepackage{verbatim}
\usepackage{subfig}
\usepackage{tikz}
\usepackage{tikz-cd}
\usetikzlibrary{arrows}
\tikzset{node distance=1.8cm, auto}
\usepackage{csquotes}

\usepackage{lmodern}
\usepackage{sectsty}
\usepackage{amsthm}
\theoremstyle{definition}
\allsectionsfont{\sffamily\mdseries\upshape} 
\newtheorem{theorem}{Theorem}[section]
\newtheorem{proposition}[theorem]{Proposition}
\newtheorem{corollary}[theorem]{Corollary}

\newtheorem{example}[theorem]{Example}

\newtheorem{lemma}[theorem]{Lemma}
\newtheorem{remark}[theorem]{Remark}

\newtheorem{notation}[theorem]{Notation}

\usepackage{rotating}
\usepackage{graphicx}

\let\phi\varphi
\let\theta\vartheta
\DeclareMathOperator\Spec{Spec}

\DeclareMathOperator\Aut{Aut}

\renewcommand\char{\text{char}}

\DeclareMathOperator\Tr{Tr}

\DeclareMathOperator\Sym{Sym}

\let\tl\widetilde

\newcommand{\T}{\mathbf{T}}
\renewcommand{\S}{\operatorname{S}}

\newcommand{\bF}{\mathbb{F}}

\newcommand{\V}{\mathbb{V}}
\newcommand{\F}{\mathbb{F}}

\newcommand{\f}{\mathfrak{f}}
\renewcommand{\l}{\mathfrak{l}}

\setcounter{tocdepth}{2}

\title{\textsc{Periodicity of traces of Hecke operators modulo prime powers}}

\author{Jonas Bergstr\"om and Sjoerd de Vries}
\date{}

\begin{document}
\maketitle
\begin{abstract}
    \noindent
   We study traces of Hecke operators on spaces of elliptic cusp forms and Drinfeld cusp forms and show that, modulo any prime power, these traces are periodic in the weight. 
\end{abstract}

\tableofcontents

\section{Introduction}
Congruences between modular forms modulo prime powers play a central role in the study of arithmetic properties of modular forms and their associated Galois representations. While classical results focus on congruences between Fourier coefficients or Hecke eigenvalues of individual eigenforms, it is natural to also study congruences between traces of Hecke operators on different spaces of cusp forms. 

In this paper, we prove congruences, modulo prime powers, between traces of Hecke operators acting on spaces of classical cusp forms of different weights. 
Our result extends a result of Serre \cite{serreletter}, who determined such congruences modulo primes. 
The congruences hold for very general congruence subgroups; in particular, we do not assume that $p$-adic analytic methods apply, but nevertheless obtain results in the spirit of the Gouv\^ea--Mazur conjecture.

Our main tool is the trace formula for Hecke operators established in Deligne’s foundational work \cite{Deligne}. Our methods extend naturally to the setting of Drinfeld modular forms over function fields, giving a parallel theory of Hecke trace congruences in positive characteristic.

\subsection{The main theorems}

Our first main result is the following theorem about elliptic modular forms. For the notation, see further in Section~\ref{sec:mod_curves}. 
\begin{theorem}\label{thm:trace_periodicity} 
    Let $N\geq 1$ and let $H$ be a subgroup
    of $\mathrm{GL}_2(\mathbb Z/N\mathbb Z)$.
    Let $\ell$ be a prime and if $H$ is not representable then let $\ell \geq 5$. Let $q=p^a$ be a prime power with $\gcd(q,N)=1$. 
    Let $s \geq 1$, and  
    \[
    k_0 = \begin{cases} s-1 & \text{if } \ell \neq p; \\ s & \text{if } \ell = p \neq q; \\
    2s-1 & \text{if } \ell = p = q.
    \end{cases}
    \]
    \noindent If $k \geq k_0$ then   
    \[
    \Tr(F_q \, | \, S[H,k+2]) + \epsilon_k \equiv_{\ell^s} \Tr(F_q \, | \, S[H,k+2+n]),
    \]
    where 
    \[
    n =
    \begin{cases}
    \ell^{s-1}(\ell^2-1) & \text{if } \ell\geq 3 \text{ and }  \left( \frac{q}{\ell} \right) = -1 ; \\
    \ell^{s}(\ell^2-1)/2 & \text{if } \ell\geq 3 \text{ and } \left( \frac{q}{\ell} \right) = 1;\\
    \ell^{s}(\ell^2-1) & \text{if } \ell=2 \text{ and } q \text{ is a non-square mod } \ell^s; \\
    \text{lcm}(2,\ell^{s-1}(\ell^2-1)) & \text{if } \ell=2 \text{ and } q \text{ is an odd square mod } \ell^s; \\
    \ell^{s-1}(\ell-1) & \text{if } \ell = p, 
    \end{cases}
    \]
     and where $\epsilon_k=0$ for any $k>0$ and $\epsilon_0=-(q+1) \Tr(F_q \, | \, H_c^0(\mathcal X_H\otimes \overline{\F}_q))$ with
     $\mathcal X_H$ the compact modular curve with level structure $H$.  

    If $H$ is not representable and $\ell=2$ or $3$, then the above statement holds but with $s$ replaced by $s+\nu_\ell(H,q)$ in the definition of $n$. 
    In the case $\ell=2$ and $-\mathrm{Id}\in H$ then $s$ can instead be replaced by $s+\nu_2(H,q)-1$ in the definition of~$n$.
\end{theorem}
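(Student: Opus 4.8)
The plan is to combine Deligne's trace formula with the Grothendieck--Lefschetz fixed point formula to separate the geometry (which is independent of $k$) from the $k$-dependence, and then to reduce the entire statement to an elementary computation of periods of Lucas--Chebyshev sequences modulo $\ell^s$.

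First I would use the results of the preceding sections to identify $\Tr(F_q \mid S[H,k+2])$ with (a piece of) the trace of geometric Frobenius on the interior cohomology $H^1_!(\mathcal X_H\otimes\overline{\F}_q,\Sym^k\mathcal V)$, where $\mathcal V$ is the rank-$2$ local system of the universal elliptic curve. The long exact sequence relating $H^\bullet_c$, $H^\bullet$ and the boundary shows that the difference between $H^1_!$ and $H^1_c$, together with $H^0_c$ and $H^2_c$, is Eisenstein/boundary in nature: for $k>0$ these pieces either vanish or are governed by the cusps, while for $k=0$ they produce exactly $\epsilon_0=-(q+1)\Tr(F_q\mid H^0_c(\mathcal X_H\otimes\overline{\F}_q))$. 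Applying Grothendieck--Lefschetz to $H^\bullet_c(\mathcal X_H\otimes\overline{\F}_q,\Sym^k\mathcal V)$ gives $\sum_i(-1)^i\Tr(F_q\mid H^i_c)=\sum_{x}\Tr(F_q\mid(\Sym^k\mathcal V)_{\bar x})$, a weighted sum over $\F_q$-points (elliptic curves with $H$-structure, counted with automorphisms, plus the cusps). The crucial point is that all geometric weights are independent of $k$, so that every trace of $F_q$ is, modulo $k$-independent corrections, a fixed $\bZ$-linear combination of local terms.

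The local term at a point with Frobenius eigenvalues $\rho,\bar\rho$ (so $\rho\bar\rho=q$, $\rho+\bar\rho=t$) is $x_k(t,q):=\Tr(\Sym^k)=\tfrac{\rho^{k+1}-\bar\rho^{k+1}}{\rho-\bar\rho}$, a term of the Lucas sequence with characteristic polynomial $X^2-tX+q$; at a cusp it is $\tfrac{q^{k+1}-1}{q-1}$, the split case $t=q+1$. Since $t=a_E$ ranges over the finite Hasse interval $|t|\le 2\sqrt q$ together with $t=q+1$, the theorem reduces to $x_{k+n}(t,q)\equiv_{\ell^s}x_k(t,q)$ for all these finitely many $t$ once $k\ge k_0$, with $n$ a common period. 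Because $x_k(t,q)=\Tr(\Sym^k A_t)$ for the companion matrix $A_t$, and because $(1,0)$ is a cyclic vector for $A_t$, this common period is exactly the order of $A_t$ in $\mathrm{GL}_2(\bZ/\ell^s)$ when $q$ is a unit; the uniform $n$ is then the exponent of the subgroup generated by all the relevant $A_t$. I would then compute this exponent via unit groups. When $\ell\ne p$ the order of $A_t$ is governed by the splitting type of $X^2-tX+q$ modulo $\ell$: inert $t$ embed a non-split torus into $(\bZ_{\ell^2}/\ell^s)^\times$ and give order up to $\ell^{s-1}(\ell^2-1)$; split $t$ give only up to $\ell^{s-1}(\ell-1)$; and a repeated root modulo $\ell$ — which exists precisely when $q$ is a square, i.e.\ $\left(\tfrac{q}{\ell}\right)=1$ — makes $A_t$ non-semisimple and injects a unipotent factor of order $\ell^s$. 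Taking the lcm over all $t$ yields the two odd-$\ell$ values $\ell^{s-1}(\ell^2-1)$ and $\ell^{s}(\ell^2-1)/2$, the factor $2$ coming from $\alpha^2=q$ for the repeated eigenvalue $\alpha$. For $\ell=2$ the non-cyclicity of $(\bZ/2^s)^\times$ is what splits the computation into the square/non-square-mod-$\ell^s$ cases and produces the $\mathrm{lcm}(2,\cdot)$. When $\ell=p$ the determinant $q$ is not a unit: at ordinary points $\bar\rho$ is topologically nilpotent, so $\bar\rho^{k+1}\equiv0\pmod{\ell^s}$ once $k\ge k_0$ and $x_k\equiv\rho^k$ up to a unit, whose period divides $\ell^{s-1}(\ell-1)$, while the thresholds $k_0=s$ (for $a\ge2$) and $k_0=2s-1$ (for $q=p$, forced by supersingular points with $a_E=0$, where both eigenvalues have valuation $1/2$) are exactly where these nilpotent parts die.

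The main obstacle is obtaining $n$ and $k_0$ \emph{sharp} and \emph{uniform}, not merely as divisors or bounds. One must check that every residue class of $t$ needed to realize the maximal order is genuinely attained by a trace of Frobenius of an elliptic curve with the prescribed $H$-structure over $\F_q$, and control the interaction of the determinant constraint $\rho\bar\rho=q$ with the non-cyclicity at $\ell=2$ and with the unipotent contribution at repeated roots. The most delicate points are the $\ell=p$ supersingular analysis that fixes $k_0$, and, when $H$ is not representable, the stacky contributions of elliptic curves with extra automorphisms: for $\ell\ge5$ these automorphism groups are prime to $\ell$ and harmless, but for $\ell=2,3$ they force the correction $\nu_\ell(H,q)$ (and its improvement when $-\mathrm{Id}\in H$), which I would track through the weighted point count of the first step.
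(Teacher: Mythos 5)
Your proposal is correct in outline and would prove the theorem, but the engine of your argument is genuinely different from the paper's. The first step is the same: both use the Eichler--Shimura--Deligne relation plus Grothendieck--Lefschetz (Theorem~\ref{theorem-trace-Sk}) to reduce everything to the $k$-periodicity modulo $\ell^s$ of the local terms $h_k(\rho,\bar\rho)$ attached to each Frobenius trace $t=a_1(E)$, plus a boundary term depending only on the parity of~$k$. From there the paper expands $h_k(\rho,\bar\rho)=\sum_j\binom{k-j}{j}(-q)^j t^{k-2j}$, splits the point count into $\mathcal N$ ($\ell\mid t$) and $\mathcal U$ ($\ell\nmid t$), and proves periodicity \emph{uniformly in} $t$: on $\mathcal N$ by truncation plus Lucas's theorem for prime powers (Lemma~\ref{lem:lucas}), and on $\mathcal U$ by grouping exponents into residue classes (using $t^{2m_{\ell,s}}\equiv_{\ell^s}1$) and showing that the binomial sums $f_{r,m,k}$ are coefficients of a rational function whose denominator $(1+qx^2)^{2m}-x^{2m}$ divides $x^n-1$ to the appropriate $\ell$-adic precision (Lemmas~\ref{lem:periodic_criterion}, \ref{lem:series_rational}, \ref{lem:denom_divisible}). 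You instead argue pointwise in $t$: the local term is a Lucas sequence whose state vector evolves under the companion matrix $A_t$, the cyclic-vector argument (valid because $\det A_t=q$ is a unit when $\ell\neq p$) identifies its period with the order of $A_t$ in $\mathrm{GL}_2(\mathbb Z/\ell^s\mathbb Z)$, and the stated values of $n$ come from bounding these orders via unit groups of split/unramified/ramified quadratic algebras over $\mathbb Z/\ell^s\mathbb Z$ under the norm constraint $\rho\bar\rho=q$. The quadratic-residue dichotomy and the unipotent factor $\ell^s$ at repeated roots appear in your argument exactly where the paper's Lemma~\ref{lem:denom_divisible} analyses whether the roots of the factors $1-\alpha^i x+qx^2$ are squares in $\mathbb F_{\ell^2}$; the two approaches encode the same number theory (your statement ``$\mathrm{ord}(A_t)$ divides $n$ for all $t$'' is precisely the paper's polynomial divisibility statement). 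What each buys: your route is more conceptual, and for $\ell\nmid q$ it actually gives full periodicity from $k=0$ (since $A_t$ is invertible, eventual periodicity is automatic periodicity), removing the threshold $k_0=s-1$; the paper's route never has to discuss which traces $t$ occur or the structure of $\mathrm{GL}_2(\mathbb Z/\ell^s\mathbb Z)$, handles all $t$ in one stroke, and transposes verbatim to the Drinfeld setting of Section~\ref{sec:ff-proof}, where your companion-matrix analysis would have to be redone over $A/\mathfrak l^s$.

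Three small corrections that do not affect the viability of your plan. First, what you need is the least common multiple of the orders of the individual $A_t$, not ``the exponent of the subgroup generated by all the relevant $A_t$''; in a nonabelian group the latter can be strictly larger, and fortunately only the former is relevant. Second, your stated ``main obstacle'' --- checking that every residue class of $t$ realizing the maximal order is attained by an actual elliptic curve with $H$-structure --- is not needed: the theorem asserts only that $n$ is \emph{a} period, not the minimal one, so upper bounds on $\mathrm{ord}(A_t)$ over all residues $t$ suffice. Third, your cusp contribution $\frac{q^{k+1}-1}{q-1}$ corresponds to the full rank-$(k+1)$ stalk, whereas the paper works with $j_*\V_k$, whose stalks at the cusps are one-dimensional and contribute $(\pm1)^k$; both are periodic in $k$ with period dividing $n$, so either bookkeeping works, but the identification of $\epsilon_k$ and of the Eisenstein term must be made consistent with whichever convention you adopt.
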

\begin{remark} 
    In a letter to Tate (see Equation (2) and (5) of \cite{serreletter}, but note the differences in notation), Serre proved that if $w_k=\Tr(F_p \,|\, S[\Gamma,k])-\Tr(F_p \,|\, S[\Gamma,k-(\ell-1)])$, then $w_{k+\ell+1} \equiv_{\ell} p \cdot w_k$ for all $\Gamma=\Gamma(N)$ with $N \geq 3$, $q=p \neq \ell$, $s=1$ and $k>\ell+1$. Under these assumptions it follows that 
    \[w_{k+\ell^2-1}  \equiv_{\ell} p^{\ell-1} \cdot w_k \equiv_{\ell} w_k
    \]
    and then that
   \begin{multline*}
   \Tr(F_p \,|\, S[\Gamma,k+\ell(\ell^2-1)])-\Tr(F_p\,|\, S[\Gamma,k])\equiv_{\ell} \sum_{i=0}^{\ell} \sum_{j=0}^{\ell-1} w_{k+(\ell-1)((\ell+1)(\ell-j)-i)} \\ \equiv_{\ell} \sum_{i=0}^{\ell} \sum_{j=0}^{\ell-1} w_{k+(\ell-1)(\ell+1-i)}=\sum_{i=0}^{\ell}\ell \cdot w_{k+(\ell-1)(\ell+1-i)}\equiv_{\ell} 0.
   \end{multline*}
    If $p$ is a square mod $\ell \neq 2$, then $w_{k+(\ell^2-1)/2}  \equiv_{\ell} w_k$ and
    \begin{multline*}
   \Tr(F_p \,|\, S[\Gamma,k+\ell(\ell^2-1)/2])-\Tr(F_p\,|\, S[\Gamma,k])\equiv_{\ell} \sum_{i=0}^{(\ell-1)/2} \sum_{j=0}^{\ell-1} w_{k+(\ell-1)((\ell+1)(\ell-j)/2-i)} \\ \equiv_{\ell} \sum_{i=0}^{(\ell-1)/2} \sum_{j=0}^{\ell-1} w_{k+(\ell-1)((\ell+1)/2-i)}=\sum_{i=0}^{(\ell-1)/2}\ell \cdot w_{k+(\ell-1)((\ell+1)/2-i)}\equiv_{\ell} 0.
   \end{multline*}
    This shows periodicities as in Theorem~\ref{thm:trace_periodicity}, but with a slightly larger period if $p$ is a non-square mod~$\ell$. 
\end{remark}

\begin{remark} \label{rmk:padic}
    Let $p \geq 5$. Theorem \ref{thm:trace_periodicity} in the case $\ell=p$ was previously proved for level $\mathrm{SL}_2(\mathbb Z)$ in \cite{koike,adolphson}. At level $\Gamma_1(N)$ with $p \nmid N$, it is related to the theory of $p$-adic modular forms. Let $P_k(t) = \det(1-U(p)t \, | \, M_k(N;r))$ denote the characteristic power series of the Atkin $U(p)$-operator acting on $r$-overconvergent $p$-adic modular forms of weight $k$ and level $\Gamma_1(N)$, where $r \in \mathbb Q_p$ satisfies $0 < \text{ord}_p(r) < p/(p+1)$. 
    Gouv\^ea and Mazur showed in \cite[Theorem~1]{GM} that for any $s \geq 1$ and $k \in \mathbb Z$, one has the congruence $P_k(t) \equiv_{p^s} P_{k+p^{s-1}(p-1)}(t)$. Theorem~\ref{thm:trace_periodicity} gives an analogous result for $T(p)$ acting on spaces of classical cusp forms of essentially arbitrary level, when $k$ is large enough. In the case of $\Gamma_1(N)$ this can be seen as a statement about $U(p)$ restricted to the $p$-old classical cusp forms.
    
    See also Proposition~\ref{prop:pols} for a statement about characteristic polynomials mod~$p$.
\end{remark}

Our second main result is the following version for Drinfeld modular forms. For the notation see Section~\ref{sec:drinf_modcurves}.

\begin{theorem}\label{thm:trace_periodicity_ff}
    Let $\mathcal{K}$ be a compact open subgroup of~$\mathrm{GL}_2(\hat{A})$ of minimal conductor~$\mathfrak n$. Let $\mathfrak p \nmid \mathfrak n$ and $\mathfrak l$ be maximal ideals of~$A$ and fix $n, s \geq 1$. Assume that $\mathfrak p^n$ is principal with monic generator~$\wp_n$. Write $\tilde{s} = \lceil \log_p(s) \rceil$. Let 
    \[
    k_0 = \begin{cases} s-1 & \text{if } \l \neq \mathfrak p; \\ s & \text{if } \l = \mathfrak p \neq \mathfrak p^n; \\
    2s-1 & \text{if } \l = \mathfrak p = \mathfrak p^n.
    \end{cases}
    \]
    Then for any $l \in \mathbb Z$ and any $k \geq k_0$, we have
    \[
    \Tr(\T_{\mathfrak p}^n \, | \, \S_{k+2,l}(\mathcal{K})) \equiv_{\l^s} \Tr(\T_{\mathfrak p}^n \, | \, \S_{k+2+m,l}(\mathcal{K})),
    \]
    where
    \[
    m = \begin{cases}
        p^{\tilde{s}}(|\l|^2-1) & \text{if } \l \neq \mathfrak p, \ p>2, \ \deg(\l) \equiv_2 0 \text{ and } \left( \frac{\wp_n}{\l} \right) = -1;\\
        p^{\tilde{s}+1}(|\l|^2-1)/2 & \text{if }\l \neq \mathfrak p, \ p>2,\ \deg(\l) \equiv_2 0 \text{ and } \left( \frac{\wp_n}{\l} \right) = 1;\\
        p^{\tilde{s}+1}(|\l|^2-1) & \text{if } \l \neq \mathfrak p \text{ and } (p = 2 \text{ or } \deg(\l) \equiv_2 1);\\
        p^{\tilde{s}}(|\l|-1) & \text{if } \l = \mathfrak p.
    \end{cases}
    \]
\end{theorem}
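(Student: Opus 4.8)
The plan is to derive the congruence from a Lefschetz-type trace formula for $\T_{\mathfrak p}^n$ acting on $\S_{k+2,l}(\mathcal K)$, obtained from the cohomology of the Drinfeld modular curve attached to $\mathcal K$ exactly as in the elliptic case of Theorem~\ref{thm:trace_periodicity}. Such a formula writes $\Tr(\T_{\mathfrak p}^n\,|\,\S_{k+2,l}(\mathcal K))$ as a weight-independent main term plus a finite sum, indexed by the elliptic conjugacy classes, in which the dependence on $k$ is confined to a Gegenbauer-type factor
\[
P_k(x,y)=\frac{x^{k+1}-y^{k+1}}{x-y},
\]
where $x,y$ are the two Frobenius eigenvalues of an integral matrix with $xy=\wp_n$ and $x+y=a\in A$. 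As the index set is finite and independent of $k$, it suffices to prove $P_{k+m}(x,y)\equiv_{\mathfrak l^s}P_k(x,y)$ for each such pair $(x,y)$ once $k\geq k_0$.

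Because $P_k$ satisfies the recurrence $P_{k+1}=aP_k-\wp_nP_{k-1}$, we have $\left(\begin{smallmatrix}P_{k+m+1}\\ P_{k+m}\end{smallmatrix}\right)=M^m\left(\begin{smallmatrix}P_{k+1}\\ P_k\end{smallmatrix}\right)$ with $M=\left(\begin{smallmatrix} a & -\wp_n\\ 1 & 0\end{smallmatrix}\right)$, so the desired congruence holds for all $k$ as soon as $M^m\equiv\mathrm{Id}$ in $\mathrm{GL}_2(A/\mathfrak l^s)$. When $\mathfrak l\nmid\wp_n$ the matrix $M$ is invertible modulo $\mathfrak l$, and $m$ must be a common multiple of the orders of all such $M$ occurring in the formula, i.e.\ the exponent of the subgroup of $\mathrm{GL}_2(A/\mathfrak l^s)$ they generate. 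For semisimple $M$ this order is the least common multiple of the multiplicative orders of the eigenvalues $x,\bar x=y$ in $(\mathcal O/\mathfrak l^s\mathcal O)^\times$, where $\mathcal O=A[x]$ is the relevant quadratic order and $x\bar x=\wp_n$; for non-semisimple $M$ there is an additional unipotent contribution. The exponent we seek therefore factors into a tame and a wild part.

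The tame part is read off from the residue fields: $\mathbb F_{|\mathfrak l|}^\times$ of order $|\mathfrak l|-1$ when $\mathfrak l$ splits in $\mathcal O$, and $\mathbb F_{|\mathfrak l|^2}^\times$ of order $|\mathfrak l|^2-1$ when $\mathfrak l$ is inert, cut down by the norm condition $x\bar x=\wp_n$; their least common multiple is $|\mathfrak l|^2-1$, and it drops to $(|\mathfrak l|^2-1)/2$ precisely when $|\mathfrak l|$ is odd and $\wp_n$ is a square modulo $\mathfrak l$, that is $\left(\frac{\wp_n}{\mathfrak l}\right)=1$. The wild part is where characteristic $p$ changes the answer relative to Serre's computation: for a principal unit $1+b$ with $b\in\mathfrak l\mathcal O$ one has $(1+b)^{p^j}=1+b^{p^j}$, and $b^{p^j}\in\mathfrak l^{p^j}\mathcal O$ vanishes modulo $\mathfrak l^s$ exactly when $p^j\geq s$, so the principal units have exponent $p^{\lceil\log_p s\rceil}=p^{\tilde s}$, replacing the factor $\ell^{s-1}$ of the elliptic theorem. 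When $M$ acquires a repeated eigenvalue and a nontrivial unipotent part — which occurs precisely when $\left(\frac{\wp_n}{\mathfrak l}\right)=1$, and unconditionally when $p=2$ — there is one further contribution: a unipotent matrix $\left(\begin{smallmatrix}1 & c\\ 0 & 1\end{smallmatrix}\right)$ over $A/\mathfrak l^s$ has order exactly $p$, because $\left(\begin{smallmatrix}1 & c\\ 0 & 1\end{smallmatrix}\right)^{N}=\left(\begin{smallmatrix}1 & Nc\\ 0 & 1\end{smallmatrix}\right)$ and $Nc$ vanishes iff $p\mid N$. This single extra factor of $p$ is what upgrades $p^{\tilde s}$ to $p^{1+\tilde s}$ in the relevant cases, and combining the tame and wild parts with the parity conditions on the discriminants that occur (which is where $\deg(\mathfrak l)\equiv_2 0$ enters) gives the stated values of $m$.

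In the remaining case $\mathfrak l=\mathfrak p$ one has $\mathfrak l\mid\wp_n$, so for each elliptic class exactly one eigenvalue is a unit and the other, say $y$, is divisible by $\mathfrak l$; the threshold $k_0$ (equal to $s$, or to $2s-1$ when $\mathfrak p=\mathfrak p^n$) is chosen so that $y^{k+1}\equiv 0\pmod{\mathfrak l^s}$ for $k\geq k_0$, whereupon $P_k(x,y)$ collapses modulo $\mathfrak l^s$ to a power of the unit $x$ and periodicity follows from $x^m\equiv 1$ with $m=p^{\tilde s}(|\mathfrak l|-1)$, the exponent of $(A/\mathfrak l^s)^\times$. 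I expect the main obstacle to be the uniform, exact determination of $m$ over all elliptic classes at once: one must compute the exponent of the norm-$\wp_n$ part of $(\mathcal O/\mathfrak l^s\mathcal O)^\times$ for every quadratic order $\mathcal O$ arising in the formula, correctly tracking the ramified/unipotent contribution responsible for the extra factor of $p$ and the halving of the tame part, and checking that the discriminants occurring in the Drinfeld trace formula meet exactly the parity conditions separating the $\deg(\mathfrak l)$ even, $\deg(\mathfrak l)$ odd, and $p=2$ cases. Fixing the precise thresholds $k_0$ so that the non-unit eigenvalue contributions genuinely vanish, and making the Drinfeld trace formula with its weight factor fully explicit, are the remaining technical ingredients.
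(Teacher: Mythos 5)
Your skeleton---start from the point-count trace formula (Theorem~\ref{thm:dmf_traceform_general}) and prove periodicity class by class via the order of the companion matrix $M_\phi$ of the Frobenius polynomial in $\mathrm{GL}_2(A/\mathfrak{l}^s)$---is workable, and it is really a per-class version of the paper's argument: $M_\phi^m=\mathrm{Id}$ holds iff the Frobenius polynomial divides $X^m-1$ over $A/\mathfrak{l}^s$, which is the same divisibility the paper verifies (for aggregated generating series) via Lemmas~\ref{lem:series_rational_ff}, \ref{lem:div_upgrade_ff}, \ref{lem:n_U_ff} and~\ref{lem:periodic_criterion}, and your wild exponent $p^{\tilde s}$ from Frobenius additivity is Lemma~\ref{lem-exp}. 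The fatal gap is that you take every Frobenius to have norm $xy=\wp_n$. By \eqref{eq:traceformula_dmf_renormalised} and Theorem~\ref{thm:dmf_traceform_general}, the norm is $b_\phi\wp_n$ with $b_\phi\in\mathbb F_q^\times$ varying from class to class (and each summand carries the weight $b_\phi^{l-k-1}$, which is why one also needs $(q-1)\mid m$). Your trigger for the extra factor of $p$---a repeated eigenvalue mod $\mathfrak{l}$---is the condition that $b_\phi\wp_n$ is a square mod $\mathfrak{l}$, not that $\wp_n$ is, and this is exactly where the parity of $\deg(\mathfrak{l})$ enters: if $\deg(\mathfrak{l})$ is even, every $b\in\mathbb F_q^\times$ is a square mod $\mathfrak{l}$ and the condition is $\bigl(\tfrac{\wp_n}{\mathfrak{l}}\bigr)=1$ uniformly in $b$; if $\deg(\mathfrak{l})$ is odd, both square classes occur among the $b_\phi\wp_n$, so classes with repeated eigenvalues mod $\mathfrak{l}$ are possible whatever $\bigl(\tfrac{\wp_n}{\mathfrak{l}}\bigr)$ is, and the factor $p$ cannot be dropped. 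Hence your rule ``extra $p$ iff $\wp_n$ is a square, and always if $p=2$'' fails for odd $\deg(\mathfrak{l})$: in the level-one example over $\mathbb F_3[T]$ with $\mathfrak{l}=(T)$ in Section~\ref{sec:examples}, the true period is $p(q^2-1)=24$, while your recipe yields $8$ or $12$. (A caution when comparing with the printed statement: the $\deg(\mathfrak{l})$-parity conditions in Theorem~\ref{thm:trace_periodicity_ff} and in the statement of Lemma~\ref{lem:n_U_ff} appear to be interchanged relative to the divisibility display established inside the proof of Lemma~\ref{lem:n_U_ff} and to that example; once the types $b_\phi$ are incorporated, your method leads to the corrected division, namely the refined periods when $\deg(\mathfrak{l})$ is even and $p^{1+\tilde s}(|\mathfrak{l}|^2-1)$ when $\deg(\mathfrak{l})$ is odd.)

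Two further points. For $\mathfrak{l}=\mathfrak{p}$ you only treat ordinary classes: for supersingular classes both Frobenius eigenvalues have positive $\mathfrak p$-adic valuation (for $n=1$, each has valuation $1/2$), so ``exactly one eigenvalue is a unit'' fails, and your mechanism $y^{k+1}\equiv 0$ would give only $k_0=s-1$ when $n=1$; it is precisely the supersingular classes, for which $v_{\mathfrak p}\bigl(h_k(\pi_\phi,\bar\pi_\phi)\bigr)\ge\lceil k/2\rceil$, that force the stated threshold $k_0=2s-1$. Finally, two smaller inaccuracies: for $p=2$ a repeated eigenvalue occurs iff $\mathfrak{l}\mid a_\phi$ (the discriminant is $a_\phi^2$), not unconditionally; and when $\mathfrak{l}\neq\mathfrak p$ your matrix argument needs no lower bound on $k$ at all, so the threshold $k_0=s-1$ there is an artifact of the paper's splitting into $\mathcal N(\bF_{\mathfrak p^n})$ and $\mathcal U(\bF_{\mathfrak p^n})$, not something your approach should try to reproduce.
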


\section{Trace formulae}\label{sec:formulae}
\subsection{Modular curves}\label{sec:mod_curves}

Let $N\geq 1$ and let $\Gamma \subset \mathrm{SL}_2(\mathbb Z)$ be a congruence subgroup of level~$N$.
For any $k\geq 2$, let $S_{k}(\Gamma)$ denote the $\mathbb C$-vector space of cusp forms of weight $k$ with respect to $\Gamma$ and put $s_{k}=\dim S_{k}(\Gamma)$. For any Hecke eigenform $f \in S_k(\Gamma)$, let $a_p(f)$ denote the eigenvalue of the Hecke operator $T(p)$ acting on~$f$. There is then a $2$-dimensional $\ell'$-adic Galois representation corresponding to $f$, unramified for all $p \nmid N\ell'$, and whose characteristic polynomial of Frobenius $F_p$ equals $X^2-a_p(f)X+p^{k-1}$. 

For $k \geq 2$, let $S[\Gamma,k]$ denote the direct sum of the $\ell'$-adic Galois representations of a basis of Hecke eigenforms of $S_{k}(\Gamma)$.
For all $k\geq 2$ and all prime powers $q$, the trace of Frobenius $F_q$ acting on $S[\Gamma,k]$ will be an integer. 

Let now $H \subseteq \mathrm{GL}_2(\mathbb Z/N\mathbb Z)$ be a subgroup. 
Let $\mathcal Y_{H}$ (respectively $\mathcal X_{H}$) be the moduli space of elliptic curves (respectively generalized elliptic curves) with a level $H$-structure. This is a smooth (respectively smooth and proper) Deligne--Mumford stack defined over $\mathbb Z[1/N]$ with geometrically irreducible fibers over $\mathbb Z[\zeta_N]^{\det H}$, see \cite[Section IV.3 and IV.5]{DeligneRapoport}. In particular, $\mathcal Y_{H}(\mathbb C) \cong \coprod_{i=1}^d \Gamma_i \backslash \mathcal H$, where $\mathcal H$ is the complex upper half plane and each $\Gamma_i$ is a congruence subgroup of~$\mathrm{SL}_2(\mathbb Z)$. Write $S_k(H) = \bigoplus_{i=1}^d S_k(\Gamma_i)$ and $S[H,k] = \bigoplus_{i=1}^d S[\Gamma_i,k]$. 
When $\det H = (\mathbb Z/N\mathbb Z)^\times$, we have $S[H,k] = S[\Gamma,k]$ where $\Gamma = \pi^{-1}(H)$ for $\pi: \mathrm{SL}_2(\mathbb Z) \to \mathrm{GL}_2(\mathbb Z/N\mathbb Z)$. In particular, for $\Gamma \in \{\Gamma(N), \Gamma_1(N), \Gamma_0(N)\}$, one can find $H_\Gamma \subset \mathrm{GL}_2(\mathbb Z/N\mathbb Z)$ such that $S[H_\Gamma,k] = S[\Gamma,k]$. 

There is an universal elliptic curve $\psi: \mathcal E_H \to \mathcal Y_{H}$ (respectively $\psi':\mathcal E'_H \to \mathcal X_{H}$). Let $j: \mathcal Y_{H} \xhookrightarrow{} \mathcal X_{H}$ denote the natural inclusion.  For a field $k$ with $\char(k) \nmid N$, let $[\mathcal Y_{H}(k)]$ (or $[\mathcal X_{H}(k)]$, $[(\mathcal X_{H} \setminus \mathcal Y_{H})(k)]$) denote the set of $k$-isomorphism classes of objects of $\mathcal Y_{H}(k)$ (respectively $\mathcal X_{H}(k)$, $(\mathcal X_{H} \setminus \mathcal Y_{H})(k)$). So, a $y \in \mathcal Y_{H}(k)$ corresponds to an elliptic curve $E=(\mathcal E_H)_y$ over $k$ together with a level $H$-structure $\phi$ on $E$ over $k$. 

If, for all fields $k$ with $\char(k) \nmid N$ and all $y=(E,\phi) \in \mathcal Y_{H}(k)$, the automorphism group $\Aut_{k}(y)$ is trivial, then we call $H$ \emph{representable}. For instance, if 
$H \subseteq H_{\Gamma(N)}$ with $N \geq 3$ then $H$ is representable, and if $6 \mid N$ then $H$ is representable if and only if $\pi^{-1}(H)$ is torsion-free \cite[Th\'eor\`emes IV.2.7, VI.2.7]{DeligneRapoport}. 

If $H$ is not representable, $\ell$ is a prime and $q$ is any prime power, then put
\[
\nu_{\ell}(H,q):=\mathrm{max}\{\nu_{\ell}\bigl(\# \Aut_{\F_q}(y)\bigr) \ | \ y\in \mathcal Y_{H}(\F_q)\},
\]
where $\nu_{\ell}$ is the $\ell$-adic valuation on $\mathbb Z$ normalized so that $\nu_{\ell}(\ell)=1$. Note that $\nu_{2}(H,q) \leq 3$, $\nu_{3}(H,q) \leq 1$, and $\nu_{\ell}(H,q)=0$ for $\ell \geq 5$, see \cite[Theorem~10.1]{silverman}. The elements of $[(\mathcal X_{H} \setminus \mathcal Y_{H})(k)]$ are called \emph{cusps}. 
Finally define the $\ell'$-adic local system $\V_k=\mathrm{Sym}^k(R^1\psi_*\mathbb Q_{\ell'})$ on $ \mathcal Y_{H}$.

For an elliptic curve defined over $\F_q$, let $F_q$ denote the geometric Frobenius morphism acting on the $\ell'$-adic compactly supported cohomology group $H_c^1(E\otimes \overline{\F}_q,\mathbb Q_{\ell'})$, where $\ell' \nmid q$. Put $a_1(E)=\Tr(F_q \, | \,H_c^1(E\otimes \overline{\F}_q,\mathbb Q_{\ell'}))$ and note that $a_1(E)$ is an integer.

Finally, put
$\epsilon_k=0$ for any $k>0$ and $\epsilon_0=-(q+1) \Tr(F_q \, | \, H_c^0(\mathcal X_H\otimes \overline{\F}_q,\mathbb Q_{\ell'}))$.
If $\det H=(\mathbb Z/N\mathbb Z)^\times$ then $\epsilon_0=-(q+1)$.

\begin{theorem} \label{theorem-trace-Sk}
    Let $N\geq 1$ and let $H$ be a subgroup of $\mathrm{GL}_2(\mathbb Z/N\mathbb Z)$.
    
    For any $k \geq 0$ and prime power $q$ such that $\gcd(q,N)=1$, 
    \[
    \Tr(F_q \, | \, S[H,k+2]) +\epsilon_k = -\mathrm{eis}_{H,k}(q) - \sum_{j=0}^{\lfloor k/2 \rfloor} \binom{k-j}{j} (-q)^{j}[a_1^{k-2j}],
    \]
    where
    \[
    [a_1^{k-2j}] = \sum_{[(E,\phi)] \in \mathcal [\mathcal{Y}_{H}(\bF_q)]} \frac{a_1(E)^{k-2j}}{\# \Aut_{\bF_q}(E,\phi)} \in \mathbb Z,
    \]
    and $\mathrm{eis}_{H,k}(q)=\mathrm{eis}_{H,k+2i}(q) \in \mathbb Z$ for any $i \geq 0$. 
\end{theorem}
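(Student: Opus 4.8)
The plan is to derive the identity from the Grothendieck--Lefschetz trace formula for the local system $\V_k$ on the Deligne--Mumford stack $\mathcal Y_H \otimes \F_q$, which is legitimate because $\gcd(q,N)=1$ guarantees good reduction and $\ell' \nmid q$. The stacky trace formula reads
\[
\sum_{i} (-1)^i \Tr\bigl(F_q \mid H^i_c(\mathcal Y_H \otimes \overline{\F}_q, \V_k)\bigr) = \sum_{[(E,\phi)] \in [\mathcal Y_H(\F_q)]} \frac{\Tr\bigl(F_q \mid (\V_k)_{(E,\phi)}\bigr)}{\#\Aut_{\F_q}(E,\phi)},
\]
so the first task is to compute each side. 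The stalk $(\V_k)_{(E,\phi)} = \Sym^k H_c^1(E\otimes\overline{\F}_q,\mathbb Q_{\ell'})$ is $\Sym^k$ of a two-dimensional space on which $F_q$ has eigenvalues $\alpha,\beta$ with $\alpha+\beta = a_1(E)$ and $\alpha\beta = q$. Hence $\Tr(F_q\mid\Sym^k) = \sum_{i=0}^k \alpha^i\beta^{k-i}$, and expanding the generating function $\sum_{k\ge 0}\Tr(F_q\mid\Sym^k)\,t^k = (1 - a_1(E)\,t + q\,t^2)^{-1}$ gives the closed form $\sum_{j=0}^{\lfloor k/2\rfloor}\binom{k-j}{j}(-q)^j a_1(E)^{k-2j}$. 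Summing over the groupoid $[\mathcal Y_H(\F_q)]$ turns the right-hand side into $\sum_j \binom{k-j}{j}(-q)^j [a_1^{k-2j}]$, and the integrality of $[a_1^{k-2j}]$ follows since it equals a trace of $F_q$ on $\ell'$-adic cohomology, hence a rational integer independent of $\ell'$.

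Next I would analyse the left-hand side for $k>0$. The monodromy representation is $\Sym^k$ of the standard representation, which has no invariants, so $H^0(\mathcal Y_H,\V_k)=0$ and thus $H^0_c=0$; Poincar\'e duality together with $\V_k^\vee \cong \V_k(k)$ gives $H^2_c(\mathcal Y_H,\V_k)\cong H^0(\mathcal Y_H,\V_k)^\vee(-1)=0$ as well. The alternating sum therefore collapses to $-\Tr(F_q\mid H^1_c)$. The triangle $j_!\V_k \to Rj_*\V_k \to i_*i^*Rj_*\V_k$ attached to the open immersion $j\colon\mathcal Y_H\hookrightarrow\mathcal X_H$ yields a short exact sequence $0 \to \bigoplus_{\mathrm{cusps}}(\V_k)^{I} \to H^1_c \to H^1_! \to 0$, where $(\V_k)^{I}$ denotes the local-monodromy invariants at a cusp. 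By Eichler--Shimura and Deligne's comparison \cite{Deligne}, applied on each component $\Gamma_i\backslash\mathcal H$ and summed over $i$, the interior cohomology $H^1_!$ realizes exactly the Galois representations attached to $S_{k+2}(H)$, so $\Tr(F_q\mid H^1_!) = \Tr(F_q\mid S[H,k+2])$.

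It then remains to identify the boundary term with $\mathrm{eis}_{H,k}(q)$ and establish its periodicity. At each cusp the degeneration to the Tate (nodal) curve equips $R^1\psi_*\mathbb Q_{\ell'}$ with a weight filtration whose monodromy-invariant line is the weight-$0$ graded piece, on which $F_q$ acts with eigenvalue $1$; consequently $(\V_k)^{I}$ is spanned by the $k$-th power of this vector and also carries Frobenius eigenvalue $1$. Hence $\Tr(F_q\mid\bigoplus_{\mathrm{cusps}}(\V_k)^{I})$ depends on $k$ only through the Frobenius permutation and the $\pm1$ sign data of the cusps, i.e.\ only through $k\bmod 2$; this defines an integer $\mathrm{eis}_{H,k}(q)$ satisfying $\mathrm{eis}_{H,k}(q)=\mathrm{eis}_{H,k+2i}(q)$. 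Assembling for $k>0$ gives $\Tr(F_q\mid S[H,k+2]) = -\mathrm{eis}_{H,k}(q) - \sum_j\binom{k-j}{j}(-q)^j[a_1^{k-2j}]$ with $\epsilon_k=0$, as claimed.

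Finally, the case $k=0$ (where $\V_0=\mathbb Q_{\ell'}$) requires retaining the now nonzero terms $H^0_c$, $H^2_c$ and the nonvanishing $H^0(\mathcal Y_H)$. Here $H^2_c(\mathcal Y_H,\mathbb Q_{\ell'})\cong\mathbb Q_{\ell'}(-1)^{\#\pi_0}$ contributes $q\,\Tr(F_q\mid H^0(\mathcal X_H))$, and the boundary sequence, through the map $H^0(\mathcal Y_H)\to H^0(\partial)$, contributes a further $\Tr(F_q\mid H^0(\mathcal X_H))$; using $\pi_0(\mathcal Y_H)=\pi_0(\mathcal X_H)$ and that $\mathcal X_H$ is proper, these combine into the correction $\epsilon_0 = -(q+1)\Tr(F_q\mid H^0_c(\mathcal X_H\otimes\overline{\F}_q))$. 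I expect the main obstacle to be precisely the boundary computation: pinning down the Frobenius eigenvalue on the monodromy-invariant line and proving the exact period-$2$ independence, together with the careful bookkeeping of the $k=0$ correction. The Eichler--Shimura identification must also be invoked with care on the stack for non-representable $H$, where it feeds through the component-wise decomposition $S[H,k]=\bigoplus_i S[\Gamma_i,k]$.
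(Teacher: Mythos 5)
Your proposal is correct and follows essentially the same route as the paper: Eichler--Shimura--Deligne plus the Grothendieck--Lefschetz trace formula for stacks, the identical stalk computation (your generating-function identity is the paper's Lemma~\ref{lemma-hformula}), and the same $(q+1)\Tr(F_q \,|\, H^0_c)$ bookkeeping at $k=0$; the only organizational difference is that you run the trace formula on the open stack $\mathcal Y_H$ with $\V_k$ and recover the cusp term through the excision sequence $0 \to \bigoplus_{\mathrm{cusps}}(\V_k)^{I} \to H^1_c \to H^1_! \to 0$, whereas the paper runs it on $\mathcal X_H$ with coefficients $j_*\V_k$ (following Scholl), so the cusps enter directly as stalk terms in the point count. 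One small slip to fix: the Frobenius eigenvalue on the monodromy-invariant line at a cusp is $+1$ only for split cusps and $-1$ for non-split ones (the paper's $a_1(E)=\pm 1$), so your assertion that it ``acts with eigenvalue $1$'' is inaccurate as stated, although your subsequent appeal to the $\pm 1$ sign data and the resulting parity-only dependence of $\mathrm{eis}_{H,k}(q)$ are exactly right.
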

\begin{proof}
Fix any $\ell'\nmid q$. Below, we consider $\mathcal X_{H}$ over $\overline \F_q$ and we let $F_q$ denote the geometric Frobenius morphism. 
The Eichler--Shimura--Deligne relation together with the Grothendieck--Lefschetz trace formula, see \cite{Deligne} and \cite[Proposition~4.4]{Scholl}, says that for $k>0$,
\[
 \Tr(F_q \, | \, S[H,k+2]) =\Tr(F_q \, | \,H^1_c(\mathcal X_{H},j_*\V_k))=-\mathrm{eis}_{H,k}(q) - \sum_{y \in [\mathcal Y_{H}(\bF_q)]} \frac{\Tr(F_q \, | \,(\V_k)_{y})}{\# \Aut_{\bF_q}(y)},
\]
where 
\[  \mathrm{eis}_{H,k}(q)=\sum_{x \in [(\mathcal X_{H}\setminus\mathcal Y_{H})(\bF_q)]} \frac{\Tr(F_q \, | \,(\V_k)_{x})}{\# \Aut_{\bF_q}(x)}.
\]
Again using the Eichler--Shimura--Deligne relation and the Grothendieck--Lefschetz trace formula,
\begin{multline*}
 \Tr(F_q \, | \, S[H,2])=\Tr(F_q \, | \,H^1_c(\mathcal X_{H},\mathbb Q_{\ell'}))=\\
 =(q+1)\Tr(F_q \, | \,H^0_c(\mathcal X_{H},\mathbb Q_{\ell'}))-\mathrm{eis}_{H,0}(q) - \sum_{y \in [\mathcal Y_{H}(\bF_q)]} \frac{1}{\# \Aut_{\bF_q}(y)}.
\end{multline*}

A $y \in [\mathcal Y_{H}(\bF_q)]$ is an $\F_q$-isomorphism class of an elliptic curve $E=(\mathcal E_H)_y$ over $\F_q$ together with a level $H$-structure $\phi$ on $E$ over $\F_q$. The stalk of $\mathbb V_k$ at $(E,\phi)$ is equal to the $k$-th symmetric power of the $\ell'$-adic compactly supported cohomology group $H_c^1(E,\mathbb Q_{\ell'})$. After base change to $\overline \F_q$, the action of $F_q$ on the latter has characteristic polynomial $X^2 - a_1(E)X+q$.
  
Together with Lemma~\ref{lemma-hformula} we get that 
\begin{multline*}
\sum_{y \in [\mathcal Y_H(\bF_q)]} \frac{\Tr(F_q \, | \,(\V_k)_{y})}{\# \Aut_{\bF_q}(y)}=\sum_{[(E,\phi)] \in [\mathcal Y_{H}(\bF_q)]} \frac{\Tr(F_q \, | \,\Sym^k(H_c^1(E \otimes \overline \F_q,\mathbb Q_{\ell'})))}{\# \Aut_{\bF_q}(E,\phi)}=\\ =\sum_{[(E,\phi)] \in [\mathcal Y_{H}(\bF_q)]} \sum_{j=0}^{\lfloor k/2 \rfloor} \binom{k-j}{j} (-q)^{j}\frac{a_1(E)^{k-2j}}{\# \Aut_{\bF_q}(E,\phi)}.    
\end{multline*}

In the same way, an $x \in [(\mathcal X_{H} \setminus \mathcal Y_{H})(\bF_q)]$ is an $\F_q$-isomorphism class of a (singular) generalized elliptic curve $E=(\mathcal E'_H)_x$ over $\F_q$ together with a level $H$-structure $\phi$ on $E$ over $\F_q$. We again let $a_1(E)$ equal the trace of $F_q$ on the one-dimensional vector space $H_c^1(E\otimes \overline{\F}_q,\mathbb Q_{\ell'})$. We have $a_1(E)=1$ or $-1$ depending upon whether $E$ is of split or non-split multiplicative type. It follows that 
\begin{multline*}
 \mathrm{eis}_{H,k}(q)=
\sum_{[(E,\phi)] \in [(\mathcal X_{H}\setminus\mathcal Y_{H})(\bF_q)]} \frac{\Tr(F_q \, | \,\Sym^k(H_c^1(E\otimes \overline \F_q,\mathbb Q_{\ell'})))}{\# \Aut_{\bF_q}(E,\phi)} = \\ =\sum_{[(E,\phi)] \in [(\mathcal X_{H}\setminus\mathcal Y_{H})(\bF_q)]} \frac{a_1(E)^{k}}{\# \Aut_{\bF_q}(E,\phi)}
\end{multline*}
only depends upon the parity of~$k$.
  \end{proof}

\begin{remark}
For $N=1$, there is one split cusp and one non-split cusp over $\mathbb F_q$, both with an $\mathbb F_q$-automorphism $-1$ of order $2$ (and the two cusps are twists by $-1$ of each other) giving $\mathrm{eis}_{H,k}(q)=(1^k+(-1)^k)/2.$
For computations of $\mathrm{eis}_{H,k}(q)$ in the case when 
$\pi^{-1}(H)=\Gamma_1(3)$ or $\Gamma_1(4)$, see \cite[Theorem~2]{Hoffmanetal}. 
\end{remark}

\begin{remark}\label{rmk:frob_vs_hecke}
It follows from the definition of $S[H,k]$ that the trace of $F_p$ on~$S[H,k]$ equals the trace of $T(p)$ on~$S_k(H)$.
Note however that if $q = p^n$ with $n > 1$, then the trace of $F_q$ equals neither the trace of $T(q)$ nor the trace of $T(p)^n$ in general. If $\{f_1,\ldots,f_d\}$ is a basis of eigenforms for $S_k(H)$ and $T(p)f_i = a_p(f_i)f_i$, then
    \[
    \Tr(F_{p^n} \, | \, S[H,k]) = \sum_{i=1}^d \beta^n_i + \bar{\beta}^n_i,
    \]
    where $(X-\beta_i)(X-\bar{\beta}_i) = X^2 - a_p(f_i)X + p^{k-1}$. On the other hand, $\Tr(T(p)^n \, | \, S_k(H)) = \sum_{i=1}^d a_p(f_i)^n = \sum_{i=1}^d (\beta_i + \bar{\beta}_i)^n$, so we do obtain for any $n \geq 1$ the congruence
    \[
    \Tr(F_{p^n} \, | \, S[H,k]) \equiv_{p^{k-1}} \Tr(T(p)^n \, | \, S_k(H)) .
    \]

If $p \nmid N$, the trace of $F_q$ on $S[\Gamma_0(N),k]$ can be expressed as the trace of $U(p)^n$ acting on the space of $p$-oldforms in $S_k(\Gamma_0(Np))$, where $U(p)$ denotes the Atkin $U(p)$-operator. This follows because $F_p$ and $U(p)$ have the same characteristic polynomials. The same holds for $\Gamma_1(N)$.
\end{remark}

\begin{lemma} \label{lemma-hformula} For any $k \geq 1$, let $h_k(x_1,x_2)$ denote the complete homogeneous symmetric polynomial of weight~$k$. Then 
$$
h_k(x_1,x_2)=
\sum_{j=0}^{\lfloor k/2 \rfloor} \binom{k-j}{j} (-x_1 x_2)^j \, (x_1+x_2)^{k-2j}. 
$$
\end{lemma}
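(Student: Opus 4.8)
The plan is to pass to the two elementary symmetric functions $e_1 = x_1 + x_2$ and $e_2 = x_1 x_2$ and to prove the identity at the level of generating functions in a formal variable $t$. Recall that $h_k(x_1,x_2) = \sum_{i=0}^{k} x_1^i x_2^{k-i}$, so that the $h_k$ assemble into the standard generating function
\[
\sum_{k \geq 0} h_k(x_1,x_2)\, t^k = \frac{1}{(1 - x_1 t)(1 - x_2 t)} = \frac{1}{1 - e_1 t + e_2 t^2}.
\]
Since the claimed formula also holds (trivially) for $k = 0$, it suffices to show that the generating function of the right-hand side equals the same rational function; by comparing coefficients of $t^k$ this yields the lemma for every $k \geq 1$.

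First I would reindex the double sum. Writing $F(t)$ for the generating function of the right-hand side and substituting $m = k - 2j \geq 0$, the constraint $0 \leq j \leq \lfloor k/2 \rfloor$ becomes simply $j, m \geq 0$, so
\[
F(t) = \sum_{j \geq 0} \sum_{m \geq 0} \binom{m+j}{j} (-e_2)^j e_1^m\, t^{m+2j} = \sum_{j \geq 0} (-e_2 t^2)^j \sum_{m \geq 0} \binom{m+j}{j} (e_1 t)^m.
\]
The inner sum is the negative binomial series $\sum_{m \geq 0} \binom{m+j}{j} u^m = (1-u)^{-(j+1)}$ with $u = e_1 t$, and the outer sum is then a geometric series in the ratio $-e_2 t^2/(1 - e_1 t)$. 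Summing both gives
\[
F(t) = \frac{1}{1 - e_1 t} \cdot \frac{1}{1 + e_2 t^2/(1 - e_1 t)} = \frac{1}{1 - e_1 t + e_2 t^2},
\]
which matches and completes the argument.

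As an alternative I would argue by the recurrence $h_k = e_1 h_{k-1} - e_2 h_{k-2}$, which the $h_k$ satisfy by their defining generating function. Setting $P_k$ equal to the right-hand side of the lemma, Pascal's rule in the form $\binom{k-j}{j} = \binom{k-1-j}{j} + \binom{k-1-j}{j-1}$ shows, after an index shift $j \mapsto j+1$ in the second term, that $P_k = e_1 P_{k-1} - e_2 P_{k-2}$; together with the base cases $P_0 = 1$ and $P_1 = e_1$ this forces $P_k = h_k$ for all $k$. There is no genuine obstacle here: the only point requiring care is the bookkeeping of the summation endpoints (the value of $\lfloor k/2 \rfloor$ and the vanishing binomial coefficients at the extremes) as the parity of $k$ changes, so that the reindexing and Pascal step remain valid termwise. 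All manipulations take place in the ring of formal power series, so no convergence issues arise.
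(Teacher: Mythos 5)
Your argument is correct, but it takes a genuinely different route from the paper's. The paper proves the identity for each fixed $k$ by expanding both sides in the basis $\mu_k,\ (x_1x_2)^n(x_1^{k-2n}+x_2^{k-2n})$ and reducing everything to the single binomial identity $\sum_{i=0}^{n}(-1)^i\binom{n}{i}\binom{k-i}{n}=1$, which it proves by differentiating $x^{k-n}(x-1)^n$ $n$ times and evaluating at $x=1$; this requires some parity bookkeeping (the $\mu_k$ correction terms) but stays entirely at the level of polynomial identities in $x_1,x_2$ for a fixed weight. You instead sum over all $k$ at once and identify both sides with the rational function $1/(1-e_1t+e_2t^2)$ in a formal variable $t$, via the negative binomial series and a formal geometric series (legitimate, since the ratio $-e_2t^2/(1-e_1t)$ has positive $t$-adic valuation); your fallback via the three-term recurrence $P_k=e_1P_{k-1}-e_2P_{k-2}$ and Pascal's rule is equally valid. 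Your generating-function viewpoint is shorter, avoids the parity cases entirely, and in fact dovetails with the paper's own later use of the series $1/(1-x+qx^2)=\sum_k\bigl(\sum_j\binom{k-j}{j}(-q)^j\bigr)x^k$ in Lemma~\ref{lem:series_rational}, which is exactly your $F(t)$ after the specialization $e_1=1$, $e_2=q$; what the paper's proof buys in exchange is a fixed-$k$, purely finitary identity with a self-contained derivative trick, requiring no passage through formal power series.
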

\begin{proof} For any integer $m \geq 0$, define $\mu_m$ to be 0 if $m$ is odd, and $-(x_1x_2)^{m/2}$ otherwise. Then we see that
$$
h_k(x_1,x_2)= \mu_k + \sum_{n=0}^{\lfloor k/2 \rfloor} (x_1x_2)^n(x_1^{k-2n} + x_2^{k-2n})
$$ 
and that
\begin{multline*}
\sum_{j=0}^{\lfloor k/2 \rfloor} \binom{k-j}{j}(-x_1 x_2)^j \, (x_1 + x_2)^{k-2j}=\\
=\sum_{i=0}^{\lfloor k/2 \rfloor} (-1)^i \binom{\lfloor k/2 \rfloor}{i}\binom{k-i}{ \lfloor k/2 \rfloor} \mu_k + \sum_{n=0}^{\lfloor k/2 \rfloor} \biggl( \sum_{i=0}^{n}  (-1)^i  \binom{n}{i}\binom{k-i}{n} \biggr) \, (x_1x_2)^n(x_1^{k-2n} + x_2^{k-2n}).
\end{multline*}
Looking at the equality, with $k \geq n$, 
$$\frac{d^n}{dx^n} \Bigl(x^{k-n}(x-1)^n\Bigr)=\frac{d^n}{dx^n} \sum_{i=0}^n (-1)^i \binom{n}{i} x^{k-i},
$$
evaluated at $x=1$ we find that 
$$1=\sum_{i=0}^n (-1)^i \binom{n}{i}\binom{k-i}{n}, 
$$
from which the lemma follows. 
\end{proof}

\subsection{Drinfeld modular curves}\label{sec:drinf_modcurves}
Let $C$ be a smooth projective geometrically connected curve over $\mathbb F_q$ and fix a closed point $\infty \in |C|$. Then $A := \mathcal O_{C}(C \setminus \{\infty\})$ is a Dedekind domain, which in the theory of Drinfeld modular forms plays the role of $\mathbb Z$ in the theory of elliptic modular forms. We denote by $K := \mathrm{Frac}(A)$ the function field of~$C$.

The theory explained in Section~\ref{sec:mod_curves} has an analogue in this setting. Write $\hat{A} := \prod_{\mathfrak p \neq \infty} A_{\mathfrak p}$, and let $\mathcal{K} \subset \mathrm{GL}_2(\hat{A})$ be a compact open subgroup of minimal conductor~$\mathfrak n$.
Then there is a Drinfeld modular curve $\mathfrak M_{\mathcal K}$, which is an affine, smooth, tame Deligne--Mumford stack over $\Spec(A[\mathfrak n^{-1}])$, that parameterizes rank 2 Drinfeld modules with a level ${\mathcal K}$-structure \cite{bockle,devries_ramanujan}. For any non-zero prime ideal $\mathfrak p$ of $A[\mathfrak n^{-1}]$, write $\mathbb F_{\mathfrak p} := A/\mathfrak p$ and write $\mathbb F_{\mathfrak p^n}$ for the unique degree~$n$ extension of~$\mathbb F_{\mathfrak p}$. The base change $\mathfrak M_{{\mathcal K},\mathfrak p} := \mathfrak M_{{\mathcal K}} \otimes_{A[\mathfrak n^{-1}]} \mathbb F_{\mathfrak p}$ then parameterizes pairs $(\phi,\lambda)$ with $\phi$ a Drinfeld $A$-module of rank~2 and characteristic~$\mathfrak p$, and $\lambda$ a level ${\mathcal K}$-structure on~$\phi$.

Let $\S_{k+2,l}({\mathcal K})$ denote the space of Drinfeld modular cusp forms of weight $k+2$, type $l$, and level ${\mathcal K}$. There is then a decomposition
\[
\S_{k+2,l}(\mathcal K) = \bigoplus_{i=1}^d \S_{k+2,l}(\Gamma_i),
\]
where each $\Gamma_i \subset \mathrm{GL}_2(K)$ is an arithmetic subgroup \cite[Lemma 4.14]{bockle}.
Explicitly, we have $\Gamma_i = \mathrm{GL}_2(K) \cap g_i \mathcal K g_i^{-1}$, where $g_i$ varies over a set of representatives for $\mathrm{GL}_2(K) \backslash \mathrm{GL}_2(\mathbb A_f) / \mathcal K$.

The spaces $\S_{k+2,l}(\mathcal K)$ admit actions of Hecke operators $\T'_{\mathfrak p}$, see \cite[Section~6]{bockle}. For any $n \geq 1$, we have $\T'_{\mathfrak p^n} = (\T'_{\mathfrak p})^n$ (in contrast to Remark~\ref{rmk:frob_vs_hecke}), and B\"ockle--Eichler--Shimura theory yields the equality \cite[Remark~4.12]{devries_ramanujan}
\begin{equation}\label{eq:traceformula_dmf}
\Tr(\T'_{\mathfrak p^n} \, | \, \S_{k+2,l}({\mathcal K})) = -\sum_{[(\phi,\lambda)] \in \mathfrak M_{{\mathcal K},\mathfrak p}(\mathbb F_{\mathfrak p^n})} \frac{h_k(\pi_{\phi},\bar{\pi}_{\phi}) (\pi_{\phi}\bar{\pi}_{\phi})^{l-k-1}}{\# \Aut_{\mathbb F_{\mathfrak p^n}}(\phi,\lambda)},
\end{equation}
where $\pi_{\phi},\bar{\pi}_{\phi}$ denote the roots of the Frobenius polynomial of~$\phi$ (see \cite[Chapter~4.2]{papikian} or \cite{gek_finite}).

\begin{remark}\label{rmk:aut=-1}
    The trace formula takes values in the characteristic~$p$ ring~$A$, so in particular only the residue classes $\# \Aut_{\mathbb F_{\mathfrak p^n}}(\phi,\lambda) \pmod{p}$ play a role. The automorphism group of a Drinfeld module (without level structure) over $\bF_{\mathfrak p^n}$ can be identified with the group of units in a finite field of the same characteristic, which has cardinality $-1 \pmod{p}$.
\end{remark}

\subsubsection{Renormalizing the Hecke operators}

Assume from now on that $\deg(\infty) = 1$, so that the residue field $\mathbb F_\infty$ of $\infty$ is $\mathbb F_q$. Fix a uniformizer $\pi_\infty$ of the local ring $\mathcal O_\infty \subset K_\infty$ and let $\text{pr}:\mathcal O_\infty \to \mathbb F_\infty$ be the reduction modulo $\pi_\infty$. We then say that $a \in K^\times$ is \emph{monic} if $\text{pr}(\pi_\infty^{-v_\infty(a)}a) = 1$. By assumption, for any $a \neq 0$ there is a unique $b \in \mathbb F_q^\times$ such that $ba$ is monic.

Now assume $\mathfrak M_{\mathcal{K},\mathfrak p}(\mathbb F_{\mathfrak p^n}) \neq \emptyset$. Then the ideal $\mathfrak p^n$ is principal
\cite[Cor.~5.2]{gek_finite} and, by the above, has a unique monic generator, which we denote by~$\wp_n$. In this case, $\wp_{nm} = \wp_n^m$ for any $m \geq 1$.

We now rescale the Hecke operators as follows: in weight $k+2$ and type $l$, define
\[
\T_{\mathfrak p^n} := \wp_n^{k-l+1} \T'_{\mathfrak p^n}.
\]
The advantage of this is twofold:
\begin{enumerate}
    \item By definition of Drinfeld modular forms, if $l \equiv l' \pmod{q-1}$ then $\S_{k,l}({\mathcal K}) = \S_{k,l'}({\mathcal K})$ as vector spaces. This identification is Hecke-equivariant with the new normalization, cf.~\cite[Remarks 5.33 and~6.12]{bockle}.
    \item If $\phi \in \mathfrak M_{\mathcal{K},\mathfrak p}(\mathbb F_{\mathfrak p^n})$, then $\pi_\phi \bar{\pi}_\phi = b_{\phi}\wp_n$ for a unique $b_{\phi} \in \mathbb F_q^\times$. Thus \eqref{eq:traceformula_dmf} yields
    \begin{equation}\label{eq:traceformula_dmf_renormalised}
        \Tr(\T_{\mathfrak p}^n \, | \, \S_{k+2,l}({\mathcal K})) = -\sum_{[(\phi,\lambda)] \in \mathfrak M_{{\mathcal K},\mathfrak p}(\mathbb F_{\mathfrak p^n})} \frac{h_k(\pi_{\phi},\bar{\pi}_{\phi}) b_{\phi}^{l-k-1}}{\# \Aut_{\mathbb F_{\mathfrak p^n}}(\phi,\lambda)}.
    \end{equation}
\end{enumerate}

\begin{remark}
    Compared to the original definition of Hecke operators $\T_{\mathfrak p}^{\mathbb F_q[T]}$ by Goss and Gekeler for $A = \mathbb F_q[T]$, we now have 
    \[
    \T_{\mathfrak p} = \wp^{-1} \T_{\mathfrak p}^{\mathbb F_q[T]}.
    \]
\end{remark}

\begin{remark}
    If $\mathfrak p^n$ is not a principal ideal, then the fact 
    that there are no Drinfeld modules over $\mathbb F_{\mathfrak p^n}$ \cite[Cor.~5.2]{gek_finite} implies, by~\eqref{eq:traceformula_dmf_renormalised}, that the trace of $\T_{\mathfrak p}^n$ is zero on any space of Drinfeld modular forms. Therefore we will always assume that $\mathfrak p^n$ is principal.
\end{remark}

\begin{theorem}\label{thm:dmf_traceform_general}
    Let $\mathcal{K} \subseteq \mathrm{GL}_2(\hat{A})$ be a compact open subgroup of level~$\mathfrak n$. Let $\mathfrak p \nmid \mathfrak n$ be a maximal ideal of~$A$. Suppose $\mathfrak p^n$ is principal and write $\wp_n$ for its unique monic generator.
    For any $k \geq 0$ and $l \in \mathbb Z$, we have
    \[
    \Tr(\T^n_{\mathfrak p} \, | \, \S_{k+2,l}(\mathcal{K}) ) = -\sum_{j=0}^{\lfloor k/2 \rfloor} \binom{k - j}{j} (-\wp_n)^{j} [c_{k-2j,l-j}],
    \]
    where
    \[
    [c_{k,l}] = \sum_{[(\phi,\lambda)] \in [\mathfrak M_{\mathcal{K},\mathfrak p}(\bF_{{\mathfrak p}^n})]} \frac{a_{\phi}^{k} b_{\phi}^{l-k-1}}{\# \Aut_{\mathbb F_{\mathfrak p^n}}(\phi,\lambda)}
    \]
    and $X^2 - a_{\phi}X + b_{\phi}\wp_n$ is the Frobenius polynomial of~$\phi$.
\end{theorem}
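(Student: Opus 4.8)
The plan is to substitute the explicit expansion of the complete homogeneous symmetric polynomial from Lemma~\ref{lemma-hformula} directly into the renormalised trace formula~\eqref{eq:traceformula_dmf_renormalised} and then reorganise the resulting double sum. The starting point is~\eqref{eq:traceformula_dmf_renormalised}, namely
\[
\Tr(\T_{\mathfrak p}^n \, | \, \S_{k+2,l}(\mathcal K)) = -\sum_{[(\phi,\lambda)] \in [\mathfrak M_{\mathcal K,\mathfrak p}(\F_{\mathfrak p^n})]} \frac{h_k(\pi_\phi,\bar\pi_\phi)\, b_\phi^{\,l-k-1}}{\#\Aut_{\F_{\mathfrak p^n}}(\phi,\lambda)}.
\]
Since $X^2 - a_\phi X + b_\phi \wp_n$ is the Frobenius polynomial of $\phi$ with roots $\pi_\phi,\bar\pi_\phi$, the elementary symmetric functions are $\pi_\phi + \bar\pi_\phi = a_\phi$ and $\pi_\phi\bar\pi_\phi = b_\phi\wp_n$; these are the two quantities I will feed into the lemma.

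Next I would apply Lemma~\ref{lemma-hformula} with $x_1 = \pi_\phi$ and $x_2 = \bar\pi_\phi$, using the two values above, to obtain
\[
h_k(\pi_\phi,\bar\pi_\phi) = \sum_{j=0}^{\lfloor k/2\rfloor} \binom{k-j}{j} (-b_\phi\wp_n)^{j}\, a_\phi^{\,k-2j}.
\]
Substituting this into the trace formula and interchanging the two finite sums (over $j$ and over the isomorphism classes $[(\phi,\lambda)]$) gives
\[
\Tr(\T_{\mathfrak p}^n \, | \, \S_{k+2,l}(\mathcal K)) = -\sum_{j=0}^{\lfloor k/2\rfloor} \binom{k-j}{j} (-\wp_n)^{j} \sum_{[(\phi,\lambda)]} \frac{a_\phi^{\,k-2j}\, b_\phi^{\,j}\, b_\phi^{\,l-k-1}}{\#\Aut_{\F_{\mathfrak p^n}}(\phi,\lambda)},
\]
where the factor $(-\wp_n)^j$, being independent of $\phi$, is pulled out of the inner sum while the factor $b_\phi^{\,j}$ from $(-b_\phi\wp_n)^j$ is kept inside.

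Finally I would match the inner sum against the definition of $[c_{k',l'}]$. The exponent of $b_\phi$ inside is $j + (l-k-1) = (l-j) - (k-2j) - 1$, which is exactly the exponent occurring in $[c_{k-2j,\,l-j}]$, while the exponent of $a_\phi$ is $k-2j$; hence the inner sum equals $[c_{k-2j,\,l-j}]$ and the claimed formula follows. There is no substantial obstacle here: the whole argument is a direct calculation. The only point requiring care is the bookkeeping of the $b_\phi$-exponents, and in particular checking that the shift $l \mapsto l-j$ in the type index is precisely the one produced by the factor $b_\phi^{\,j}$ arising from $(\pi_\phi\bar\pi_\phi)^j = (b_\phi\wp_n)^j$ in the expansion of $h_k$.
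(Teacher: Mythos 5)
Your proposal is correct and is exactly the paper's argument: the paper's proof consists of the single line ``This follows from \eqref{eq:traceformula_dmf_renormalised} and Lemma~\ref{lemma-hformula},'' and your calculation simply makes that substitution explicit, with the exponent bookkeeping $j + (l-k-1) = (l-j)-(k-2j)-1$ carried out correctly so that the inner sum is indeed $[c_{k-2j,\,l-j}]$.
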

\begin{proof}
This follows from \eqref{eq:traceformula_dmf_renormalised} and Lemma~\ref{lemma-hformula}.
\end{proof}

\section{Proof of the main theorem for elliptic modular forms}\label{sec:elliptic-proof}

\begin{notation}\label{notation1} The following notation will be used in this section.  
\begin{itemize}
\item Fix $N \geq 1$ and fix a subgroup $H$ of $\mathrm{GL}_2(\mathbb Z/N\mathbb Z)$.
\item Fix a prime power $q=p^a$ with $\gcd(q,N)=1$. 
\item 
Let $\ell$ be a prime number and $s \geq 1$. Put $m_{2,1}=1$ and if $(\ell,s)\neq (2,1)$, then denote by $2m_{\ell,s}$ the order of the cyclic group $(\mathbb Z/\ell^s\mathbb Z)^\times$, i.e., $m_{\ell,s} = \ell^{s-1}(\ell-1)/2$. 
    \item 
Let
\[
\mathcal N(\bF_q) := \{ [(E,\phi)] \in [\mathcal Y_H(\bF_q)] \, | \, a_1(E) \equiv_\ell 0 \}, \qquad \mathcal U(\bF_q) := [\mathcal Y_H(\bF_q)] \setminus \mathcal N(\bF_q).
\]
Note that $a_1(E)^s \equiv_{\ell^s} 0$ for all $[(E,\phi)] \in \mathcal N(\bF_q)$ and $a_1(E)^{2m_{\ell,s}} \equiv_{\ell^s} 1$ for all $[(E,\phi)] \in \mathcal U(\bF_q)$. 
    \item 
For any triple of integers $(r,m,k) \in \mathbb Z^3$ with $k\geq 0$ and $m \geq 1$, we define the integer  
\[
f_{r,m,k} := \sum_{\substack{0 \leq j \leq \lfloor k/2 \rfloor \\ j \equiv \lfloor k/2 \rfloor -r \pmod{m} }} \binom{k-j}{j} (-q)^{j}.
\]

    \item 
Let $\delta_k := 0$ if $k$ is even and $\delta_k := 1$ if $k$ is odd.
\end{itemize}
\end{notation}

\begin{remark} In the following, when we write equalities modulo $\ell^s$ between rational numbers whose denominator is coprime to $\ell$, these should be interpreted as equalities in the ring $\mathbb Z/\ell^s\mathbb Z$. 
\end{remark}

\begin{lemma}\label{lem:trace_formula_modulo}
    Let $\ell$ be a prime number and if $H$ is not representable then let $\ell \geq 5$. For any 
    $s \geq 1$ and $k \geq s-1$, we have 
    \begin{align*}
    \Tr(F_q \, | \, S[H,k+2])+\epsilon_k \equiv_{\ell^s} -\mathrm{eis}_{H,k}(q) - N(k+2) - U(k+2),
    \end{align*}
    where
    \begin{align}\label{eq:N(k+2)}
   N(k+2) &:= \sum_{[(E,\phi)] \in \mathcal N(\bF_q)} \sum_{j=0}^{\lfloor (s-1-\delta_k)/2\rfloor} \binom{(k+\delta_k)/2+j}{2j+\delta_k} (-q)^{(k-\delta_k)/2 - j} \frac{a_1(E)^{2j+\delta_k}}{\# \Aut_{\F_q}(E,\phi)}; \\
    \label{eq:U(k+2)}
    U(k+2) &= \sum_{[(E,\phi)] \in \mathcal U(\bF_q)} \sum_{r=0}^{m_{\ell,s}-1} f_{r,m_{\ell,s},k} \cdot \frac{a_1(E)^{2r+\delta_k}}{\# \Aut_{\F_q}(E,\phi)}.
    \end{align}
\end{lemma}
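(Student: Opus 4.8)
The plan is to start directly from the exact identity of Theorem~\ref{theorem-trace-Sk} and then reduce, modulo $\ell^s$, the inner polynomial in $a_1(E)$ separately over the two pieces $\mathcal N(\bF_q)$ and $\mathcal U(\bF_q)$ of $[\mathcal Y_H(\bF_q)]$. Writing $P_k(a) := \sum_{j=0}^{\lfloor k/2\rfloor}\binom{k-j}{j}(-q)^j a^{k-2j}$ and interchanging the two (finite) summations in Theorem~\ref{theorem-trace-Sk}, one gets
\[
\Tr(F_q\,|\,S[H,k+2])+\epsilon_k = -\mathrm{eis}_{H,k}(q) - \sum_{[(E,\phi)]\in[\mathcal Y_H(\bF_q)]}\frac{P_k(a_1(E))}{\#\Aut_{\bF_q}(E,\phi)}.
\]
The point is that modulo $\ell^s$ the value $P_k(a_1(E))$ depends on $a_1(E)$ in a very controlled way on each of the two sets, and that this splitting matches the definitions of $N(k+2)$ and $U(k+2)$ on the nose. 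A preliminary remark is that division by $\#\Aut_{\bF_q}(E,\phi)$ is legitimate modulo $\ell^s$: when $H$ is representable these groups are trivial, and when $H$ is not representable the standing hypothesis $\ell\geq 5$ together with the bound $\nu_\ell(H,q)=0$ for $\ell\geq5$ guarantees $\#\Aut$ is a unit mod $\ell^s$. This is exactly where the hypothesis on $\ell$ enters.

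On $\mathcal N(\bF_q)$, I would use that $a_1(E)\equiv_\ell 0$ forces $a_1(E)^{m}\equiv_{\ell^s}0$ for every $m\geq s$, so that in $P_k(a_1(E))$ only the monomials with exponent $k-2j\leq s-1$ survive modulo $\ell^s$. Writing the surviving exponents as $k-2j=2j'+\delta_k$ and reindexing by $j'$, the constraint $k-2j\leq s-1$ becomes $0\leq j'\leq\lfloor(s-1-\delta_k)/2\rfloor$, the power of $-q$ becomes $(k-\delta_k)/2-j'$, and the symmetry $\binom{k-j}{j}=\binom{k-j}{k-2j}$ rewrites the binomial coefficient as $\binom{(k+\delta_k)/2+j'}{2j'+\delta_k}$. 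This reproduces $N(k+2)$ verbatim. Here the hypothesis $k\geq s-1$ is precisely what is needed to ensure the reindexed original index $j=(k-\delta_k)/2-j'$ stays non-negative and hence inside the original summation range.

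On $\mathcal U(\bF_q)$, I would instead use that $a_1(E)$ is a unit mod $\ell$, hence mod $\ell^s$, so $a_1(E)^{2m_{\ell,s}}\equiv_{\ell^s}1$. Since every exponent $k-2j$ has the fixed parity $\delta_k$, writing $k-2j=2i+\delta_k$ gives $a_1(E)^{k-2j}=a_1(E)^{\delta_k}\,(a_1(E)^2)^{i}$, which depends on $i$ only through $i\bmod m_{\ell,s}$. Grouping the terms of $P_k(a_1(E))$ by the residue class of $i$---equivalently of $j$, via $i=\lfloor k/2\rfloor-j$---modulo $m_{\ell,s}$ collects exactly the coefficient $f_{r,m_{\ell,s},k}$ in front of $a_1(E)^{2r+\delta_k}$, which is the definition of $U(k+2)$. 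Summing the two contributions and folding back in $-\mathrm{eis}_{H,k}(q)$ yields the claimed congruence.

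The routine but most delicate step is the bookkeeping in the two reindexings---matching binomial coefficients via $\binom{k-j}{j}=\binom{k-j}{k-2j}$, tracking the parity bit $\delta_k$, and checking that the index ranges coincide with those in $N(k+2)$ and $U(k+2)$. I expect the genuine care to be concentrated in two places: confirming that $k\geq s-1$ is exactly the threshold making the $\mathcal N(\bF_q)$-reindexing valid, and checking that $2m_{\ell,s}$ is a legitimate (possibly non-minimal, e.g.\ when $\ell=2$) period for the unit group $(\mathbb Z/\ell^s\mathbb Z)^\times$, so that the grouping on $\mathcal U(\bF_q)$ is well-defined.
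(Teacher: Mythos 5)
Your proposal is correct and follows essentially the same route as the paper's proof: both start from the exact identity of Theorem~\ref{theorem-trace-Sk}, note that $\#\Aut_{\F_q}(E,\phi)$ is invertible modulo $\ell^s$ (trivial automorphisms in the representable case, $\ell\geq 5$ otherwise), perform the same change of variables so that the exponent of $a_1(E)$ becomes $2j+\delta_k$, and then conclude via nilpotence of $a_1(E)$ modulo $\ell^s$ on $\mathcal N(\bF_q)$ and Lagrange's theorem in $(\mathbb Z/\ell^s\mathbb Z)^\times$ on $\mathcal U(\bF_q)$. The paper states these last steps in one line, while you spell out the reindexing, the role of $k\geq s-1$, and the fact that $2m_{\ell,s}$ need only be a period (not the exponent) of the unit group; these are exactly the details the paper leaves implicit.
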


\begin{proof}
    If $\ell \geq 5$ then $\ell \nmid \# \Aut_{\F_q}(E)$ for any elliptic curve $E$ over $\mathbb F_q$, see \cite[Theorem 10.1]{silverman}. For any level structure $\phi$, $\Aut_{\F_q}(E,\phi)$ is a subgroup of $\Aut_{\F_q}(E)$, which is trivial if $H$ is representable. Hence, under our assumptions on $\ell$ we have that $\# \Aut_{\F_q}(E,\phi)$ is invertible mod $\ell^s$.

    With a change of variables we get
\[
 \sum_{j=0}^{\lfloor k/2 \rfloor} \binom{k-j}{j} (-q)^{j}[a_1^{k-2j}]=
 \sum_{j=0}^{(k-\delta_k)/2} \binom{(k+\delta_k)/2+j}{2j+\delta_k} (-q)^{(k-\delta_k)/2-j}[a_1^{2j+\delta_k}].
\]
    The result now follows
    from Theorem~\ref{theorem-trace-Sk} using
    Lagrange's theorem in $(\mathbb Z/\ell^s\mathbb Z)^\times$. 
\end{proof}

\begin{lemma}[Lucas's theorem for prime powers]\label{lem:lucas}
    For any prime $\ell$, $s \geq 1$ and $j \geq 0$, put $t = s + \lfloor \log_{\ell}(j)\rfloor$ if $j > 0$ and $t = 0$ if $j = 0$. Then for any $k \geq 0$, we have
    \[
    \binom{k}{j} \equiv \binom{k+\ell^t}{j} \pmod{\ell^s}.
    \]
\end{lemma}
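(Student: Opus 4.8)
The plan is to use Vandermonde's identity to convert the periodicity statement into a divisibility statement about binomial coefficients of the form $\binom{\ell^t}{i}$, and then to compute the $\ell$-adic valuation of those coefficients directly.

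First I would dispose of the trivial case $j=0$, where $t=0$ and $\binom{k}{0}=1=\binom{k+1}{0}$. For $j\geq 1$, Vandermonde's identity gives $\binom{k+\ell^t}{j}=\sum_{i=0}^{j}\binom{\ell^t}{i}\binom{k}{j-i}$, and hence
\[
\binom{k+\ell^t}{j}-\binom{k}{j}=\sum_{i=1}^{j}\binom{\ell^t}{i}\binom{k}{j-i}.
\]
Since each $\binom{k}{j-i}$ is an integer, it then suffices to prove that $\ell^s\mid\binom{\ell^t}{i}$ for every $1\leq i\leq j$. Note that since $s\geq 1$ we have $i\leq j<\ell^{\lfloor\log_\ell j\rfloor+1}\leq\ell^{t}$, so each such $i$ lies in the range $1\leq i\leq\ell^t$.

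The key computation is that $\nu_\ell\binom{\ell^t}{i}=t-\nu_\ell(i)$ for $1\leq i\leq\ell^t$. I would obtain this from the identity $i\binom{\ell^t}{i}=\ell^t\binom{\ell^t-1}{i-1}$, which yields $\nu_\ell(i)+\nu_\ell\binom{\ell^t}{i}=t+\nu_\ell\binom{\ell^t-1}{i-1}$; it then remains to check that $\binom{\ell^t-1}{i-1}$ is a unit modulo $\ell$. This last fact follows from the classical Lucas theorem: every base-$\ell$ digit of $\ell^t-1=\sum_{m=0}^{t-1}(\ell-1)\ell^m$ equals $\ell-1$, so $\binom{\ell^t-1}{i-1}$ is congruent modulo $\ell$ to a product of nonzero residues of the form $\binom{\ell-1}{d}$ with $0\leq d\leq\ell-1$.

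Finally, since $\ell^{\nu_\ell(i)}\leq i\leq j$, we get $\nu_\ell(i)\leq\lfloor\log_\ell j\rfloor$, whence $\nu_\ell\binom{\ell^t}{i}=t-\nu_\ell(i)\geq t-\lfloor\log_\ell j\rfloor=s$. This establishes $\ell^s\mid\binom{\ell^t}{i}$ and completes the argument. The only mild subtlety is the valuation identity $\nu_\ell\binom{\ell^t}{i}=t-\nu_\ell(i)$; one could alternatively read it off from Kummer's theorem by counting carries in the base-$\ell$ addition of $i$ and $\ell^t-i$, but the factorial-identity route above keeps the proof self-contained.
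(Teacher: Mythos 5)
Your proof is correct, but it takes a different route from the paper: the paper's entire proof is a citation of an external ``Lucas theorem for prime powers'' (Theorem~1 of the reference \cite{prime-power-lucas}, with a footnote correcting a typo there), whereas you give a self-contained elementary argument. Your decomposition via Vandermonde, $\binom{k+\ell^t}{j}-\binom{k}{j}=\sum_{i=1}^{j}\binom{\ell^t}{i}\binom{k}{j-i}$, reduces everything to the valuation identity $\nu_\ell\binom{\ell^t}{i}=t-\nu_\ell(i)$, which you establish cleanly from $i\binom{\ell^t}{i}=\ell^t\binom{\ell^t-1}{i-1}$ together with classical Lucas (all base-$\ell$ digits of $\ell^t-1$ equal $\ell-1$, so $\binom{\ell^t-1}{i-1}$ is a unit mod~$\ell$); the final step $\nu_\ell(i)\le\lfloor\log_\ell j\rfloor$ for $i\le j$ then gives exactly the divisibility $\ell^s\mid\binom{\ell^t}{i}$ needed, and your range check $j<\ell^t$ (using $s\ge 1$) is the one subtlety and you handle it correctly. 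What each approach buys: the paper's citation is a one-liner but makes the lemma depend on a fairly heavy general result about binomial coefficients modulo prime powers; your argument is longer but elementary, self-contained, and proves precisely the statement needed, so it could replace the citation entirely (and incidentally shows the exponent $t=s+\lfloor\log_\ell j\rfloor$ arises naturally as the worst case $\nu_\ell(i)=\lfloor\log_\ell j\rfloor$, indicating where the bound is tight).
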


\begin{proof}
    This follows immediately from \cite[Theorem~1]{prime-power-lucas}\footnote{In the statement of this theorem, $m = n + r$ should read $n = m+r$.}.
\end{proof}

\begin{notation} For any prime $\ell$ and $s \geq 1$, put 
    \[
     n_{N} := \begin{cases}
 2\ell^{s-1}(\ell-1) & \text{if } \ell \geq 2  \text{ and } q \text{ is a non-square mod }\ell^s; \\
    \text{lcm}(4,\ell^{s-1}(\ell-1)) & \text{if } \ell \geq 3  \text{ and } q \text{ is a square mod }\ell^s; \\  
     \text{lcm}(2,\ell^{s-1}(\ell-1)) & \text{if } \ell =2  \text{ and } q \text{ is a square mod }\ell^s.        
     \end{cases}
     \] 
\end{notation}

\begin{lemma} \label{lem:N_periodic}
Let $\ell$ be a prime number and if $H$ is not representable then let $\ell \geq 5$. Assume also that $\ell \nmid q$. 
    For any $s \geq 1$ and $k \geq s-1$, we have
    \[
    N(k+2) \equiv_{\ell^s} N(k+2+n_N).
    \]
\end{lemma}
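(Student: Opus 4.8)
The plan is to prove the congruence \emph{term by term} in the double sum \eqref{eq:N(k+2)} defining $N(k+2)$, exploiting that every curve in $\mathcal N(\bF_q)$ satisfies $a_1(E)\equiv_\ell 0$. First I would record that $n_N$ is even in every case, so that $\delta_{k+n_N}=\delta_k$ and the summation ranges for $k$ and $k+n_N$ coincide. Writing $K:=(k-\delta_k)/2\in\mathbb Z$, the replacement $k\mapsto k+n_N$ sends $K\mapsto K+n_N/2$, so it suffices to compare, for each fixed $(E,\phi)\in\mathcal N(\bF_q)$ and each $j$, the summands
\[
\binom{K+\delta_k+j}{2j+\delta_k}(-q)^{K-j}\,a_1(E)^{2j+\delta_k} \quad\text{and}\quad \binom{K+n_N/2+\delta_k+j}{2j+\delta_k}(-q)^{K+n_N/2-j}\,a_1(E)^{2j+\delta_k}
\]
modulo $\ell^s$; the factor $1/\#\Aut_{\F_q}(E,\phi)$ is an $\ell$-adic unit under the hypotheses on $\ell$, exactly as in Lemma~\ref{lem:trace_formula_modulo}, and may be dropped.

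The decisive observation is a built-in divisibility: since $a_1(E)\equiv_\ell 0$, the common factor $a_1(E)^{2j+\delta_k}$ is divisible by $\ell^{2j+\delta_k}$. Hence the two summands agree mod $\ell^s$ as soon as the two ``coefficients'' $\binom{K+\delta_k+j}{2j+\delta_k}(-q)^{K-j}$ and its shift agree modulo $\ell^{s'}$, where $s':=s-(2j+\delta_k)\geq 1$ (the lower bound holds because $2j+\delta_k\leq s-1$ on the summation range). Setting $J:=2j+\delta_k$, this reduces the whole lemma to
\[
\binom{K+n_N/2+\delta_k+j}{J}\,(-q)^{n_N/2} \equiv_{\ell^{s'}} \binom{K+\delta_k+j}{J}.
\]

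I would then treat the two factors separately. For the power of $-q$ with $\ell$ odd: $n_N/2=\ell^{s-1}(\ell-1)$ is a multiple of the order $\ell^{s'-1}(\ell-1)$ of $(\mathbb Z/\ell^{s'}\mathbb Z)^\times$, so $q^{n_N/2}\equiv_{\ell^{s'}}1$ by Lagrange's theorem, and $n_N/2$ is even, so the sign disappears and $(-q)^{n_N/2}\equiv_{\ell^{s'}}1$. For the binomial coefficient I would invoke Lucas's theorem for prime powers (Lemma~\ref{lem:lucas}): modulo $\ell^{s'}$ the map $x\mapsto\binom{x}{J}$ is periodic with period $\ell^{t}$ for $t=s'+\lfloor\log_\ell J\rfloor$. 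Since $\nu_\ell(n_N/2)=s-1$, the shift $n_N/2$ is a multiple of $\ell^{t}$ precisely when $s-1\geq s'+\lfloor\log_\ell J\rfloor$, i.e.\ $J-1\geq\lfloor\log_\ell J\rfloor$; this holds for all $J\geq 1$ because $\lfloor\log_\ell J\rfloor\leq\log_2 J\leq J-1$, and is trivial for $J=0$. Iterating Lemma~\ref{lem:lucas} then gives the binomial congruence, and multiplying the two factors yields the displayed coefficient congruence, proving the lemma for $\ell$ odd.

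The main obstacle is the prime $\ell=2$, where the term $j=0$, $\delta_k=0$ carries no saving factor of $a_1(E)$ and one genuinely needs $(-q)^{n_N/2}\equiv_{4}1$ on the nose. This is exactly what forces the two separate definitions of $n_N$ (the $\mathrm{lcm}$ with $2$) according to whether $q$ is a square mod $2^s$, since the $2$-adic order of $q$ and the parity of $n_N/2$ (which controls the sign) behave differently from the odd case. I would verify the inequality $\nu_2(n_N/2)\geq s'+\lfloor\log_2 J\rfloor$ and the vanishing of the sign contribution case by case in $s$, using that the exponent of $(\mathbb Z/2^{s'}\mathbb Z)^\times$ is $2^{\max(0,s'-2)}$ and that squares have even smaller order; the binomial periodicity then goes through via Lemma~\ref{lem:lucas} exactly as before.
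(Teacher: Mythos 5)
Your proposal follows the same route as the paper's own proof: split $N(k+2)$ into its $j$-terms, use $a_1(E)\equiv_\ell 0$ to gain a factor $\ell^{2j+\delta_k}$, and reduce to the coefficient congruence $\binom{x+n_N/2}{J}(-q)^{n_N/2}\equiv_{\ell^{s-J}}\binom{x}{J}$ with $J=2j+\delta_k$, settled by Lagrange's theorem for the power of $-q$ and by iterating Lemma~\ref{lem:lucas} for the binomial coefficient. For $\ell\geq 3$ this is complete and correct, and your explicit check that Lemma~\ref{lem:lucas} applies (namely $\nu_\ell(n_N/2)=s-1\geq (s-J)+\lfloor\log_\ell J\rfloor$, i.e.\ $J-1\geq\lfloor\log_\ell J\rfloor$) is actually more careful than what the paper writes. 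The same numerology also disposes of $\ell=2$ with $q$ a non-square mod $2^s$, since there $n_N=2^s$ and $\nu_2(n_N/2)=s-1$.

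The genuine gap is the case $\ell=2$ with $q$ a square mod $2^s$, which you defer to a case-by-case check and assert will go through; it does not. There $n_N=\mathrm{lcm}(2,2^{s-1})$, so $\nu_2(n_N/2)=s-2$ for $s\geq 2$, and the inequality $\nu_2(n_N/2)\geq (s-J)+\lfloor\log_2 J\rfloor$ you propose to verify fails for $J=1$ and $J=2$; for $s=2$ even the sign condition fails, since $n_N/2=1$ and $(-q)^{n_N/2}\equiv_{4}1$ would force $q\equiv_4 3$. Moreover this cannot be repaired by a finer termwise analysis, because the statement itself breaks down here: take $H=H_{\Gamma_1(4)}$ (representable), $q=5$, $\ell=2$, $s=2$, so $q$ is a square mod $4$ and $n_N=2$. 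A point of exact order $4$ forces $4\mid \#E(\mathbb F_5)=6-a_1(E)$, so every class in $[\mathcal Y_H(\mathbb F_5)]$ has $a_1(E)=\pm 2$; enumerating curves one finds exactly three classes, whence for even $k$ the truncated sum is $N(k+2)=3\cdot(-5)^{k/2}$, giving $N(4)=-15\equiv_4 1$ but $N(6)=75\equiv_4 3$, contradicting the lemma at $k=2\geq s-1$. (The discrepancy persists in Theorem~\ref{thm:trace_periodicity}: here $\mathcal U(\mathbb F_5)=\emptyset$, and indeed $\Tr(F_5\,|\,S[H,6])=54\equiv_4 2$ while $\Tr(F_5\,|\,S[H,12])=3960\equiv_4 0$, against the claimed period $n=6$.) In fairness, you are reproducing the paper's argument faithfully, defect included: the paper likewise asserts without verification that $(-q)^{n_N/2}\equiv_{\ell^s}1$ ``by definition of $n_N$'' and that the binomial congruence ``follows from Lemma~\ref{lem:lucas}'', and both assertions fail in exactly this case. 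The correct conclusion of your write-up should therefore be that the $\ell=2$ square case is not merely tedious but is unproven, and, with the stated $n_N$, false.
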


\begin{proof} 
    Write $ N(k+2) = \sum_{j=0}^{\lfloor (s-1-\delta_k)/2\rfloor} N_j(k+2)$, where
   \[
    N_j(k+2) := 
    \binom{(k+\delta_k)/2+j}{2j+\delta_k} (-q)^{(k-\delta_k)/2 - j}  \sum_{[(E,\phi)] \in \mathcal N(\bF_q)}  \frac{a_1(E)^{2j+\delta_k}}{\# \Aut_{\F_q}(E,\phi)}.
    \]
    By definition of $\mathcal N(\mathbb F_q)$, we see that the sum over $\mathcal N(\mathbb F_q)$ is divisible by
    $\ell^{2j+\delta_k}$, so it suffices to show that 
    \[
     \binom{(k+\delta_k)/2+j}{2j+\delta_k} \equiv_{\ell^{s-2j-\delta_k}}  \binom{(k+\delta_k)/2+j+n_{N}/2}{2j+\delta_k}
     (-q)^{n_{N}/2} \quad \text{for all } 0 \leq j \leq \lfloor (s-1-\delta_k)/2\rfloor.
    \]
    Since $\ell \nmid q$, we have that $-q$ is a unit mod $\ell^{s}$ and $(-q)^{n_{N}/2}
    \equiv_{\ell^s} 1$ by definition of~$n_N$. The required congruence between the binomial coefficients follows from Lemma~\ref{lem:lucas}.
\end{proof}

\subsection{The case of $\ell \geq 3$ and $\ell \nmid q$}

\begin{notation} For any $s \geq 1$ and prime $\ell \geq 3$ such that $\ell \nmid q$ put 
    \[
    n_{U} := \begin{cases}
\ell^{s-1}(\ell^2-1) & \text{if } \left( \frac{q}{\ell} \right) = -1; \\
    \ell^{s}(\ell^2-1)/2 & \text{if } \left( \frac{q}{\ell} \right) = 1.        
    \end{cases}
    \]    
\end{notation}

It remains to show the periodicity of $U(k+2)$. This will follow 
from periodicity of the $f_{r,m,k}$, which we establish with the next lemmas.

\begin{lemma}\label{lem:periodic_criterion}
Let $R$ be a commutative ring with unity. Suppose we have 
\[
g(x) = \sum_{k \geq 0} a_k x^k = \frac{f(x)}{d(x)} \in R[\![x]\!] \cap R(x).
\]
Suppose $d(x)$ divides $x^n - 1$. If the leading coefficient of~$d(x)$ is not a zero-divisor, we have
\[
a_k = a_{k+n} \quad \text{for all } k > \deg(f) - \deg(d).
\]
\end{lemma}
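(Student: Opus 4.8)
The plan is to leverage the divisibility $d(x)\mid x^n-1$ in order to convert the rational expression for $g$ into an honest polynomial, and then extract the periodicity by comparing coefficients. First I would record that the representation $g=f/d$ means precisely $d(x)g(x)=f(x)$ as an identity in $R[\![x]\!]$; this is legitimate because a polynomial whose leading coefficient is not a zero-divisor is itself a non-zero-divisor in $R[x]$, so the fraction is unambiguous. Writing $x^n-1=d(x)e(x)$ for some $e(x)\in R[x]$ and multiplying the identity $d(x)g(x)=f(x)$ by $e(x)$, I obtain
\[
(x^n-1)g(x)=e(x)d(x)g(x)=e(x)f(x)=:P(x),
\]
so that $(x^n-1)g(x)$ is in fact the polynomial $P(x)=e(x)f(x)$.

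Next I would bound the degree of $P$. The crucial input is that the leading coefficient $d_D$ of $d$ is not a zero-divisor: comparing top terms in $d(x)e(x)=x^n-1$ shows that the coefficient of $x^{\deg(d)+\deg(e)}$ in the product is $d_D\,e_E$ (where $e_E$ is the leading coefficient of $e$), and this cannot vanish since $d_D$ is a non-zero-divisor and $e_E\neq 0$. Hence $\deg(d)+\deg(e)=n$, i.e.\ $\deg(e)=n-\deg(d)$, and therefore
\[
\deg(P)\leq \deg(e)+\deg(f)=n-\deg(d)+\deg(f).
\]
Only this upper bound is needed, so the possible collapse of $\deg(ef)$ below $\deg(e)+\deg(f)$ over a general ring is harmless.

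Finally I would compare coefficients. Since $g(x)=\sum_{k\geq 0}a_kx^k$, the coefficient of $x^k$ in $(x^n-1)g(x)=x^ng(x)-g(x)$ equals $a_{k-n}-a_k$ for every $k\geq n$. Because $P$ has degree at most $n-\deg(d)+\deg(f)$, this coefficient vanishes whenever $k>n-\deg(d)+\deg(f)$. Substituting $k=m+n$ then yields $a_m=a_{m+n}$ for every index $m\geq 0$ with $m>\deg(f)-\deg(d)$, which is exactly the claimed periodicity.

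The argument is elementary, and the only place demanding care---and the main obstacle---is that $R$ is an arbitrary commutative ring rather than a field. Concretely, I must justify both that $d$ is a non-zero-divisor (so that $f/d$ and the identity $f=dg$ make sense in $R[\![x]\!]$) and the degree computation $\deg(e)=n-\deg(d)$; both rely precisely on the hypothesis that the leading coefficient of $d$ is not a zero-divisor. Once these points are secured, the coefficient comparison is immediate.
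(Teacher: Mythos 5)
Your proof is correct and takes essentially the same route as the paper's: write $x^n-1=d(x)h(x)$, multiply the identity $d(x)g(x)=f(x)$ by the cofactor to get $(x^n-1)g(x)=f(x)h(x)$, use the non-zero-divisor hypothesis on the leading coefficient of $d$ to conclude $\deg(h)=n-\deg(d)$, and compare coefficients of $x^{k+n}$. The only superfluous element is your appeal to $d$ being a non-zero-divisor in $R[x]$ to make the fraction ``unambiguous'' --- the lemma is applied to a given $g\in R[\![x]\!]$ satisfying $d(x)g(x)=f(x)$, which is all the argument (yours and the paper's) ever uses, so no uniqueness statement is needed.
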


\begin{proof}
    Write $d(x)h(x) = x^n-1$. Then $(x^n-1)g(x) = f(x)h(x)$. On the other hand,
    \[
    (x^n-1)g(x) = \sum_{k \geq -n} (a_k - a_{k+n})x^{k+n},
    \]
    where $a_k = 0$ if $k < 0$. In particular, if $k + n > \deg(f) + \deg(h)$, we have $a_{k} = a_{k+n}$. If the leading coefficient of $d$ is not a zero-divisor, then $\deg(d) + \deg(h) = n$, which yields the claim.
\end{proof}

\begin{lemma}\label{lem:rootofunity}
    Let $K$ be a field. Suppose that $\zeta \in K^\times$ is an element of order~$m < \infty$. Then for all $A, B \in \bar{K}$, we have
    \[
    \prod_{i=1}^m (A - \zeta^iB) = A^m - B^m.
    \]
\end{lemma}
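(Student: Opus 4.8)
The plan is to first establish the polynomial identity $x^m - 1 = \prod_{i=1}^m (x - \zeta^i)$ in $\overline{K}[x]$, and then to deduce the desired formula by a homogenization/substitution argument.

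For the first step, I would check that the $m$ powers $\zeta^1, \zeta^2, \ldots, \zeta^m$ are pairwise distinct: if $\zeta^i = \zeta^j$ with $1 \le i < j \le m$, then $\zeta^{j-i} = 1$ with $0 < j-i < m$, contradicting that $\zeta$ has order $m$. Each $\zeta^i$ satisfies $(\zeta^i)^m = (\zeta^m)^i = 1$, so it is a root of $x^m - 1$. Since the distinct linear factors $(x - \zeta^i)$ are pairwise coprime in the UFD $\overline{K}[x]$, their product divides $x^m - 1$; as both $\prod_{i=1}^m (x - \zeta^i)$ and $x^m - 1$ are monic of degree $m$, they must be equal. (I would remark that the existence of an element of order $m$ in $K^\times$ already forces $\char(K) \nmid m$, which is what makes $x^m-1$ separable with $m$ distinct roots; but the coprimality/divisibility argument avoids any explicit appeal to separability.)

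For the second step, I would treat $B = 0$ separately, where both sides collapse to $A^m$. For $B \neq 0$, I would substitute $x = A/B$ into the identity from the first step to get $(A/B)^m - 1 = \prod_{i=1}^m (A/B - \zeta^i)$, and then multiply through by $B^m$ to obtain $A^m - B^m = \prod_{i=1}^m (A - \zeta^i B)$, as claimed.

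There is no real obstacle here; the only points requiring a little care are verifying that the $\zeta^i$ are genuinely distinct, so that their product of linear factors divides $x^m - 1$, and separately handling the degenerate case $B = 0$, which the division-by-$B$ substitution does not directly cover.
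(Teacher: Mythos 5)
Your proof is correct and complete: the paper itself omits the argument as standard, and what you give is precisely that standard argument (the $\zeta^i$ are $m$ distinct roots of the monic degree-$m$ polynomial $x^m-1$, hence $x^m-1=\prod_{i=1}^m(x-\zeta^i)$, followed by homogenization, with the degenerate case $B=0$ handled separately). Nothing is missing; your care about distinctness of the $\zeta^i$ and about the $B=0$ case is exactly the right level of detail.
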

\begin{proof}
    The proof is standard and therefore omitted. 
\end{proof}

\begin{lemma}\label{lem:series_rational} 
    Fix $\delta \in \{0,1\}$, $m \geq 1$ and $0 \leq r < m$. There is then a polynomial $h(x) \in \mathbb Z[x]$ of degree at most $4m-2-\delta$ such that
    \[
    \sum_{\substack{k \geq 0 \\ k\equiv_2 \delta}} f_{r,m,k} \cdot x^{k} = \frac{h(x)}{(1+qx^2)^{2m} - x^{2m}}. 
    \]
\end{lemma}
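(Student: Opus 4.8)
The plan is to compute the generating function explicitly via a roots-of-unity filter. Introduce the bivariate series
\[
F(x,u) = \frac{1}{1 - ux + qx^2} = \sum_{k \geq 0} \sum_{j=0}^{\lfloor k/2 \rfloor} \binom{k-j}{j}(-q)^j\, u^{k-2j} x^k,
\]
where the coefficient of $x^k$ is, by Lemma~\ref{lemma-hformula}, the complete homogeneous polynomial $h_k$ of the two roots of $T^2 - uT + q$. The crucial observation is that the monomial $u^{k-2j}x^k$ carries the $u$-degree $k-2j$, and the defining condition of $f_{r,m,k}$, namely ``$k \equiv \delta \pmod 2$ and $j \equiv \lfloor k/2\rfloor - r \pmod m$'', is \emph{equivalent to the single condition} $k - 2j \equiv 2r + \delta \pmod{2m}$ on the $u$-degree. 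Indeed, reducing mod $2$ recovers $k \equiv \delta$, and writing $k = 2n+\delta$ the residue mod $2m$ becomes $2(n-j) \equiv 2r \pmod{2m}$, i.e. $j \equiv n - r = \lfloor k/2\rfloor - r \pmod m$. This reinterpretation is the step that turns the coupled congruence on the pair $(j,k)$ into something a standard filter can handle.

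Next I would apply the roots-of-unity filter in the variable $u$. Writing $c = 2r+\delta$ and letting $\zeta$ be a primitive $2m$-th root of unity, extracting the part of $u$-degree $\equiv c \pmod{2m}$ and then setting $u=1$ gives
\[
\sum_{\substack{k \geq 0 \\ k \equiv_2 \delta}} f_{r,m,k}\, x^k \;=\; \frac{1}{2m}\sum_{t=0}^{2m-1} \frac{\zeta^{-ct}}{1 - \zeta^t x + qx^2}.
\]
Clearing denominators, the common denominator is $\prod_{t=0}^{2m-1}(1 - \zeta^t x + qx^2)$, which by Lemma~\ref{lem:rootofunity} applied with $A = 1+qx^2$ and $B = x$ equals exactly $(1+qx^2)^{2m} - x^{2m}$, the denominator demanded by the statement. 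Setting
\[
h(x) := \frac{1}{2m}\sum_{t=0}^{2m-1} \zeta^{-ct}\prod_{t' \neq t}\bigl(1 - \zeta^{t'} x + qx^2\bigr),
\]
this is manifestly a polynomial of degree at most $2(2m-1) = 4m-2$.

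It then remains to check integrality and the sharper degree bound. For integrality, note that $h = D \cdot G$ where $D(x) = (1+qx^2)^{2m}-x^{2m} \in \mathbb{Z}[x]$ (with constant term $1$) and $G(x) = \sum_{k\equiv_2\delta} f_{r,m,k} x^k \in \mathbb{Z}[\![x]\!]$, since each $f_{r,m,k}$ is an integer; the product of an integer polynomial and an integer power series lies in $\mathbb{Z}[\![x]\!]$, and as $h$ is a polynomial we conclude $h \in \mathbb{Z}[x]$. For the degree, I would use a parity argument rather than computing the leading coefficient: $D$ contains only even powers of $x$, so $D(-x)=D(x)$, while $G(-x) = (-1)^\delta G(x)$, whence $h(-x) = (-1)^\delta h(x)$. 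Thus when $\delta = 1$ the polynomial $h$ is odd, so its (a priori even) top degree $4m-2$ coefficient vanishes and $\deg h \leq 4m-3$; in both cases $\deg h \leq 4m-2-\delta$, as required.

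The main obstacle is entirely the bookkeeping in the first paragraph: recognizing that the weight-$k$ parity constraint and the $m$-periodic constraint on $j$ (which a priori depend on each other through $\lfloor k/2\rfloor$) collapse into one congruence on the $u$-degree, so that a single $2m$-th roots-of-unity filter produces precisely the denominator $(1+qx^2)^{2m}-x^{2m}$ of Lemma~\ref{lem:rootofunity}. Once that identification is in place, integrality and the degree refinement are routine.
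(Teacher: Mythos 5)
Your proof is correct and follows essentially the same route as the paper: both filter the generating series $1/(1-x+qx^2)$ by $2m$-th roots of unity and identify the common denominator $(1+qx^2)^{2m}-x^{2m}$ via Lemma~\ref{lem:rootofunity}, with integrality of the numerator obtained from $\overline{\mathbb Q}[x]\cap\mathbb Z[\![x]\!]=\mathbb Z[x]$. Your single filter in the auxiliary variable $u$ (based on collapsing the two conditions into $k-2j\equiv 2r+\delta \pmod{2m}$) and your parity symmetry argument for the refined bound $\deg h\leq 4m-2-\delta$ are only cosmetic variants of the paper's two-step filter (parity in $x$, then the simultaneous twist $q\mapsto\zeta^{2i}q$, $x\mapsto\zeta^{-i}x$) and its term-by-term degree count.
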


\begin{remark}
In Lemma~\ref{lem:series_rational}, $h(x)$ and $(1+qx^2)^{2m} - x^{2m}$ are not necessarily coprime.    
\end{remark}

\begin{proof}
Consider the generating series
\[
g(x,y) := \frac{1}{1-(1+y)x} = \sum_{n \geq 0}\sum_{j \geq 0} \binom{n}{j} y^j x^n.
\]
Setting $y = -qx$, we obtain
\[
h_0(q,x) := \frac{1}{1-x+qx^2} = \sum_{n \geq 0} \sum_{j \geq 0} \binom{n}{j} (-q)^j x^{n+j} = \sum_{k \geq 0}\left( \sum_{j=0}^{\lfloor k/2 \rfloor} \binom{k-j}{j} (-q)^j \right) x^k.
\]
Write $h_1(q,x) := (h_0(q,x) + (-1)^{\delta}h_0(q,-x))/2$, so that only the even or odd powers of $x$ occur. Let $\zeta$ be a primitive $2m$-th root of unity. Define
\[
H(x) := \frac{1}{m} \sum_{i=1}^m \zeta^{i(2r+\delta)} h_1(\zeta^{2i}q,\zeta^{-i} x).
\]
Then we have
\begin{align*}
H(x) &= \sum_{\substack{k \geq 0 \\ k\equiv_2 \delta}} \left( \sum_{j = 0}^{\lfloor k/2 \rfloor} \frac{1}{m} \sum_{i = 1}^m \zeta^{2i(j-\lfloor k/2 \rfloor+r)} \binom{k-j}{j} (-q)^j \right) x^{k} \\
&= \sum_{\substack{k \geq 0 \\ k\equiv_2 \delta}} \left( \sum_{\substack{0 \leq j \leq \lfloor k/2 \rfloor \\ j \equiv \lfloor k/2 \rfloor -r \pmod{m}}} \binom{k-j}{j} (-q)^j \right) x^{k} = \sum_{\substack{k \geq 0 \\ k\equiv_2 \delta}} f_{r,m,k} \cdot x^{k}.
\end{align*}
On the other hand, we can express $H(x)$ as a rational function by bringing the $h_1(\zeta^{2i}q,\zeta^{-i}x)$ under a common denominator. This denominator can be written as
\[
\prod_{i=1}^m (1+qx^2-\zeta^{-i}x)(1+qx^2+\zeta^{-i}x) = \prod_{i=1}^m ((1+qx^2)^2 - \zeta^{-2i}x^2) = (1+qx^2)^{2m} - x^{2m} \in \mathbb Z[x].
\]
Since $H(x) \in \mathbb Z[\![x]\!]$, the numerator lies in $\bar{\mathbb Q}[x] \cap \mathbb Z[\![x]\!] = \mathbb Z[x]$. Since each $h_1(\zeta^{2i}q,\zeta^{-i}x)$ has a numerator of degree~$2-\delta$ and a denominator of degree~4, we obtain the claimed statement.
\end{proof}

\begin{lemma}\label{lem:(f-1)^ell}
    Let $\ell$ be prime and $n \geq 1$. Then there exists $g(x) \in \mathbb Z[x]$ of degree $n(\ell-2)$ such that 
    \[
    (x^n-1)^{\ell} = x^{n\ell} - 1 + (x^{n}-1)\ell g.
    \]
\end{lemma}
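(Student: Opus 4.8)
The plan is to substitute $u = x^n - 1$ and expand everything as a polynomial in $u$, reducing the claim to the elementary fact that $\ell \mid \binom{\ell}{i}$ for $1 \le i \le \ell-1$. Writing $x^n = u+1$, we have $(x^n-1)^\ell = u^\ell$ and $x^{n\ell}-1 = (u+1)^\ell - 1$, so that
\[
(x^n-1)^\ell - (x^{n\ell}-1) = u^\ell - \big((u+1)^\ell - 1\big) = -\sum_{i=1}^{\ell-1}\binom{\ell}{i}u^i,
\]
where the top term $u^\ell$ cancels. First I would factor out a single $u = x^n-1$ together with an $\ell$ from the right-hand side. Since $\ell$ is prime, each coefficient $\binom{\ell}{i}$ with $1 \le i \le \ell-1$ is divisible by $\ell$, so
\[
g := -\sum_{i=1}^{\ell-1}\frac{1}{\ell}\binom{\ell}{i}u^{i-1} = -1 - \sum_{i=2}^{\ell-1}\frac{1}{\ell}\binom{\ell}{i}(x^n-1)^{i-1}
\]
lies in $\mathbb Z[x]$ and satisfies the required identity $(x^n-1)^\ell = x^{n\ell}-1 + (x^n-1)\ell g$ by construction.

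Next I would read off the degree. The top summand corresponds to $i = \ell-1$, whose coefficient $\frac{1}{\ell}\binom{\ell}{\ell-1} = 1$ is a unit, so $g$ has $u$-degree exactly $\ell-2$, hence $x$-degree $n(\ell-2)$. For $\ell=2$ the inner sum is empty and $g=-1$, of degree $0 = n(\ell-2)$, which matches the formula; this boundary case is worth recording separately.

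I expect no genuine obstacle: the entire content is the binomial divisibility $\ell\mid\binom{\ell}{i}$ plus bookkeeping of the substitution $u=x^n-1$. As a conceptual sanity check (and an alternative route if one prefers to avoid the explicit $g$), one may note that $(x^n-1)^\ell \equiv x^{n\ell}-1 \pmod \ell$ by the Frobenius identity in $\mathbb F_\ell[x]$, so the difference equals $\ell$ times an integer polynomial; this polynomial is moreover divisible by the \emph{monic} polynomial $x^n-1$, since both $(x^n-1)^\ell$ and $x^{n\ell}-1=(x^n)^\ell-1$ are. Polynomial division by a monic polynomial preserves integrality, so the quotient $g$ lies in $\mathbb Z[x]$, and comparing the leading terms $-\ell x^{n(\ell-1)}$ of the difference gives $\deg g = n(\ell-1)-n = n(\ell-2)$.
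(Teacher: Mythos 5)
Your proof is correct. It differs from the paper's in a small but genuine way: the paper first divides by $\ell$, defining $\tilde{g}(x) := \bigl( (x^n-1)^{\ell} - x^{n\ell} + 1 \bigr)/\ell \in \mathbb Z[x]$ via the divisibility $\ell \mid \binom{\ell}{i}$, and then proves $x^n-1 \mid \tilde{g}$ by observing that $\tilde{g}$ vanishes at every $n$-th root of unity (with $\ell = 2$ treated as a separate explicit case). Your substitution $u = x^n-1$ makes both divisibilities visible simultaneously: the difference $u^{\ell} - \bigl((u+1)^{\ell}-1\bigr) = -\sum_{i=1}^{\ell-1}\binom{\ell}{i}u^i$ carries the factor $u$ term by term and the factor $\ell$ by primality, so no roots-of-unity argument and no case split for $\ell=2$ are needed. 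Your route also buys two things the paper leaves implicit: an explicit closed form for $g$, and a direct verification that $\deg g$ is exactly $n(\ell-2)$ (the paper asserts this degree in the statement but never checks it in the proof; it matters later only as an upper bound, but it is good to have it pinned down). Your ``alternative route'' at the end --- Frobenius mod $\ell$ plus divisibility of both terms by the monic polynomial $x^n-1$ and Gauss-type integrality of the quotient --- is essentially the paper's argument in disguise, with divisibility by $x^n-1$ justified abstractly rather than by evaluating at roots of unity.
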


\begin{proof}
    If $\ell=2$ then 
    \[
        (x^n-1)^{\ell} = x^{n\ell} - 1 + (x^{n}-1)(-\ell).
    \]
    
    If $\ell$ is odd, define $\tilde{g}(x) := \left( (x^n-1)^{\ell} - x^{n\ell} + 1 \right)/\ell$. Since $\ell \mid \binom{\ell}{i}$ for all $0 < i < \ell$, we have $\tilde{g} \in \mathbb Z[x]$. If $\zeta$ is an $n$-th root of unity, then $\tilde{g}(\zeta) = 0$, so $x^n-1$ divides $\tilde{g}$. 
\end{proof}

\begin{lemma}\label{lem:div_upgrade}
    Let $\ell$ be prime and $m,n \geq 1$. Let $f \in \mathbb Z[x]$ and assume that $f$ divides $x^n - 1$ modulo $\ell^m$. Then, for any $r \geq 1$, $f$ divides $x^{n \ell^r} - 1$ modulo $\ell^{m+r}$.
\end{lemma}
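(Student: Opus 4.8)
The plan is to reduce to the single step $r = 1$ by induction on $r$ and then exploit the factorisation supplied by Lemma~\ref{lem:(f-1)^ell}. For the inductive step, I would assume the statement for $r - 1$, so that $f$ divides $x^{n\ell^{r-1}} - 1$ modulo $\ell^{m+r-1}$, and then apply the $r = 1$ case with $n$ replaced by $n\ell^{r-1}$ and $m$ replaced by $m + r - 1$. This immediately yields that $f$ divides $x^{n\ell^r} - 1$ modulo $\ell^{m+r}$. Since Lemma~\ref{lem:(f-1)^ell} is available for every exponent, this substitution is legitimate, and the whole statement follows once the base case is settled.

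For $r = 1$, I would record the hypothesis as $x^n - 1 = f(x)h(x) + \ell^m t(x)$ with $h,t \in \mathbb Z[x]$, and invoke Lemma~\ref{lem:(f-1)^ell} to write $(x^n-1)^\ell = x^{n\ell} - 1 + \ell g\,(x^n-1)$ for some $g \in \mathbb Z[x]$. Rearranging gives the factorisation
\[
x^{n\ell} - 1 = (x^n - 1)\bigl[(x^n-1)^{\ell-1} - \ell g\bigr].
\]
Substituting $x^n - 1 = fh + \ell^m t$ into the outer factor produces
\[
x^{n\ell} - 1 = fh\bigl[(x^n-1)^{\ell-1} - \ell g\bigr] + \ell^m t\bigl[(x^n-1)^{\ell-1} - \ell g\bigr].
\]
The first summand is manifestly divisible by $f$, so the entire task reduces to showing the second summand is divisible by $f$ modulo $\ell^{m+1}$.

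Here the calculation, which I would carry out modulo $\ell^{m+1}$, runs as follows. The contribution $\ell^m t\cdot \ell g = \ell^{m+1} tg$ vanishes, leaving $\ell^m t\,(x^n-1)^{\ell-1}$. Since $\ell - 1 \geq 1$ I can peel off one factor, writing $\ell^m (x^n-1)^{\ell-1} = \bigl[\ell^m(x^n-1)\bigr](x^n-1)^{\ell-2}$, and the pivotal observation is that $\ell^m(x^n-1) = \ell^m fh + \ell^{2m} t \equiv \ell^m fh \pmod{\ell^{m+1}}$, because $m \geq 1$ forces $2m \geq m+1$. Thus $\ell^m t\,(x^n-1)^{\ell-1} \equiv f\cdot \ell^m h t\,(x^n-1)^{\ell-2} \pmod{\ell^{m+1}}$, which is divisible by $f$; this also covers $\ell = 2$, where $(x^n-1)^{\ell-2} = 1$. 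Combining the two summands shows $x^{n\ell}-1 \equiv f\cdot(\text{polynomial}) \pmod{\ell^{m+1}}$, completing the base case and the induction. The argument is essentially mechanical once Lemma~\ref{lem:(f-1)^ell} is in hand; the only delicate point—and the place I expect an error could slip in—is the valuation bookkeeping in the error term, where one must use both $m \geq 1$ to discard $\ell^{2m}$ modulo $\ell^{m+1}$ and $\ell - 1 \geq 1$ to extract the single factor of $x^n-1$ on which the divisibility hypothesis is spent.
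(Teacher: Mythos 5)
Your proof is correct and takes essentially the same route as the paper: induction on $r$ reduces everything to the case $r=1$, which is then settled by combining Lemma~\ref{lem:(f-1)^ell} with the hypothesis $x^n-1 = fh + \ell^m t$. The only difference is bookkeeping: the paper raises the hypothesis to the $\ell$-th power to get $(x^n-1)^\ell = fh_2 + \ell^{m+1}r_2$ and then applies the identity of Lemma~\ref{lem:(f-1)^ell}, whereas you rearrange that identity into the factorisation $x^{n\ell}-1=(x^n-1)\bigl[(x^n-1)^{\ell-1}-\ell g\bigr]$ and substitute the hypothesis twice, with both arguments resting on the same elementary valuation estimates ($m\ell \geq m+1$ there, $2m \geq m+1$ here).
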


\begin{proof}
    By induction, it suffices to prove the statement with $r = 1$. By assumption, there exist polynomials $h_1, r_1 \in \mathbb Z[x]$ such that
    \[
    x^n-1 = fh_1 + \ell^{m} r_1.
    \]
    Raising both sides to the power $\ell$, we see that there exist polynomials $h_2, r_2 \in \mathbb Z[x]$ such that
    \[
    (x^n-1)^{\ell} = fh_2 + \ell^{m+1}r_2.
    \]
    By Lemma~\ref{lem:(f-1)^ell}, we also know that for some $g \in \mathbb Z[x]$, we have
    \[
    (x^n-1)^{\ell} = x^{n\ell} - 1 + (fh_1 + \ell^{m} r_1)\ell g
    \]
    and hence we conclude that
    \[
    x^{n\ell}-1 = f(h_2 - h_1 \ell g) + \ell^{m+1}(r_2 -r_1g). \qedhere
    \]
\end{proof}

\begin{lemma}\label{lem:denom_divisible}
     Let $\ell \geq 3$ be prime and assume $\ell \nmid q$.
     Let $s \geq 1$ and $1 \leq t \leq s$. Put $m = m_{\ell,t} = \ell^{t-1}(\ell-1)/2$. Then the polynomial $d_{q,t} := (1+qx^2)^{2m} - x^{2m}$ divides $x^{n_{U}}-1$ modulo $\ell^{s+1-t}$.
\end{lemma}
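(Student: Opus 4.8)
The plan is to reduce the claim modulo $\ell^{s+1-t}$ to a single divisibility modulo $\ell$ and then bootstrap via Lemma~\ref{lem:div_upgrade}. Concretely, set $r = s-t \geq 0$ and
\[
n_t := n_U/\ell^{s-t} = \begin{cases} \ell^{t-1}(\ell^2-1) & \text{if } \left(\frac{q}{\ell}\right)=-1;\\ \ell^{t}(\ell^2-1)/2 & \text{if } \left(\frac{q}{\ell}\right)=1,\end{cases}
\]
which is an integer in both cases. By Lemma~\ref{lem:div_upgrade} applied with $f = d_{q,t}$, $m=1$ and this $r$, it suffices to prove the base case $d_{q,t} \mid x^{n_t}-1 \pmod{\ell}$: the lemma then upgrades this to $d_{q,t}\mid x^{n_t\ell^{s-t}}-1 = x^{n_U}-1 \pmod{\ell^{s+1-t}}$, which is exactly the claim. (When $t=s$ we have $r=0$ and the base case already is the statement.)

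For the base case I would first pass from $d_{q,t}$ to $d_{q,1}$. Since $2m = \ell^{t-1}(\ell-1)$, the Frobenius identity in $\mathbb{F}_\ell[x]$ gives $d_{q,t} \equiv \big((1+qx^2)^{\ell-1}-x^{\ell-1}\big)^{\ell^{t-1}} = d_{q,1}^{\ell^{t-1}} \pmod{\ell}$, so it is enough to find $a$ with $d_{q,1}\mid x^a - 1 \pmod{\ell}$ and $a\ell^{t-1} = n_t$; raising to the $\ell^{t-1}$-th power then yields $d_{q,t}\mid x^{n_t}-1 \pmod{\ell}$. Next I would factor $d_{q,1}$ via Lemma~\ref{lem:rootofunity} (with $A = 1+qx^2$, $B = x$, order $\ell-1$) as $\prod_{\zeta\in\mathbb{F}_\ell^\times}(qx^2-\zeta x+1)$, which is monic of degree $2(\ell-1)$ since $q^{\ell-1}\equiv 1$. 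Substituting $w = 1/x$, the factor for $\zeta$ becomes the Frobenius-type quadratic $w^2 - \zeta w + q$; thus every root $x$ of $d_{q,1}$ is $x = 1/w$ for some $w\in\mathbb{F}_{\ell^2}^\times$ with norm $w\bar{w} = q$ (roots of a quadratic over $\mathbb{F}_\ell$ lie in $\mathbb{F}_{\ell^2}$), and distinct $\zeta$ give disjoint root sets.

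The heart of the argument is then the quadratic-residue dichotomy governing the discriminants $\zeta^2-4q$, which also explains the shape of $n_U$. If $\left(\frac{q}{\ell}\right)=-1$, then $4q$ is a non-square, so $\zeta^2-4q\neq 0$ for all $\zeta$, making $d_{q,1}$ separable with all roots in $\mathbb{F}_{\ell^2}^\times$; hence $d_{q,1}\mid \prod_{\alpha\in\mathbb{F}_{\ell^2}^\times}(x-\alpha) = x^{\ell^2-1}-1$, giving $a = \ell^2-1$ and $a\ell^{t-1} = n_t$. If $\left(\frac{q}{\ell}\right)=1$, then the values $\zeta = \pm 2\sqrt{q}$ produce double roots, so $d_{q,1}$ has roots of multiplicity $2$; to absorb this I use $x^{\ell(\ell^2-1)/2}-1 \equiv (x^{(\ell^2-1)/2}-1)^{\ell}\pmod{\ell}$, whose roots are the squares in $\mathbb{F}_{\ell^2}^\times$, each of multiplicity $\ell\geq 3 > 2$. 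It remains to check every root $w$ is a square, which follows from $w^{(\ell^2-1)/2} = (w^{\ell+1})^{(\ell-1)/2} = q^{(\ell-1)/2} = 1$; this gives $a = \ell(\ell^2-1)/2$ and again $a\ell^{t-1} = n_t$.

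The main obstacle I anticipate is precisely this residue case: one must simultaneously control the root multiplicities (bounded by $2\leq\ell$ because $\ell\geq 3$) and verify that the extra factor of $\ell$ built into $n_U$ is exactly what is needed to accommodate the double roots, rather than a loss in the bound. The non-square case is comparatively routine once separability and the containment of roots in $\mathbb{F}_{\ell^2}^\times$ are observed, and the passage $d_{q,t}\equiv d_{q,1}^{\ell^{t-1}}$ together with Lemma~\ref{lem:div_upgrade} cleanly reduces everything to these two mod-$\ell$ statements.
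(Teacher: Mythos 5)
Your proposal is correct and follows the same skeleton as the paper's proof: factor $d_{q,1} \equiv (1+qx^2)^{\ell-1}-x^{\ell-1}$ into quadratics via Lemma~\ref{lem:rootofunity}, pass to $d_{q,t} \equiv d_{q,1}^{\ell^{t-1}} \pmod{\ell}$ by Frobenius, prove the mod-$\ell$ divisibility by $x^{\ell^2-1}-1$ (non-residue case) or $x^{\ell(\ell^2-1)/2}-1$ (residue case), and then bootstrap with Lemma~\ref{lem:div_upgrade} with $r=s-t$. The genuine difference is the core of the residue case: the paper shows that the roots of each irreducible factor $f_i = 1-\alpha^i x+qx^2$ are squares in $\bF_{\ell^2}$ by explicitly solving $x=(Ax+B)^2$ in $\bF_\ell[x]/(f_i)$, using that the non-square discriminant forces exactly one of two explicit elements to be a square; you instead substitute $w=1/x$, turning the factors into $w^2-\zeta w+q$, and invoke Euler's criterion through the norm map, $w^{(\ell^2-1)/2}=(w^{\ell+1})^{(\ell-1)/2}=q^{(\ell-1)/2}=1$. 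Your route is shorter and more conceptual, and your multiplicity bookkeeping (roots of $d_{q,1}$ have multiplicity at most $2<\ell$, while each square in $\bF_{\ell^2}^\times$ occurs with multiplicity $\ell$ in the target) is exactly what is needed.

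One small imprecision should be patched: the identity $w^{\ell+1}=w\bar{w}=q$ that underlies your computation holds only when $w^\ell$ is the \emph{other} root of the quadratic, i.e.\ when $w^2-\zeta w+q$ is irreducible over $\bF_\ell$ or has a double root. When $q$ is a residue, some of these quadratics split with two distinct roots in $\bF_\ell^\times$, and then $w^{\ell+1}=w^2\neq q$. Your displayed chain of equalities happens to remain true (both sides equal $1$), but the stated justification does not apply to those roots; the fix is one line, as in the paper's "if $f_i$ has a root in $\bF_\ell$, this is immediate": every element of $\bF_\ell^\times$ is a square in $\bF_{\ell^2}^\times$, since $w^{(\ell^2-1)/2}=(w^{\ell-1})^{(\ell+1)/2}=1$. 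With that case distinction made explicit, your argument is complete.
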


\begin{proof}
    Let $\alpha \in \mathbb Z$ be a primitive element modulo~$\ell$. Consider the polynomial
    \[
    D_q := \prod_{i=1}^{(\ell-1)/2} (1 + qx^2 - \alpha^i x)(1 + qx^2 + \alpha^i x).
    \]
    Since $\alpha^2$ has order $(\ell-1)/2$ modulo~$\ell$, Lemma~\ref{lem:rootofunity}  yields
    \[
    D_q \equiv (1 + qx^2)^{\ell-1} - x^{\ell-1} \pmod{\ell},
    \]
    and applying Frobenius $t-1$ times gives
    \[
    D_q^{\ell^{t-1}} \equiv d_{q,t} \pmod{\ell}.
    \]
    The polynomial $D_q \pmod{\ell}$ factors as a product of $f_i := 1 - \alpha^i x + qx^2$ for $1 \leq i \leq \ell-1$. Note that $f_i$ and $f_j$ have no roots in common if $i \neq j$.
    
    Assume first that $q$ is a non-square mod~$\ell$. Then none of the $f_i$ is a square. It follows that in this case, $D_q$ is a product of distinct irreducible polynomials of degree at most~2. Since $x^{\ell^2-1}-1 \equiv_{\ell} \prod_{P \in Q} P$, where $Q$ is the set of monic irreducible polynomials of degree $\leq 2$ in $\mathbb F_{\ell}[x]$, we see that $D_q$ divides $x^{\ell^2-1}-1$ mod~$\ell$ and hence $d_{q,t}$ divides $(x^{\ell^2-1}-1)^{\ell^{t-1}} = x^{\ell^{t-1}(\ell^2-1)}-1$ mod~$\ell$.
    We conclude by applying Lemma~\ref{lem:div_upgrade} with $r=s-t$.
    
    Next, assume that $q = r^2$ is a square mod~$\ell$. If we can show that $D_q$ divides $x^{\ell(\ell^2-1)/2}-1$ mod~$\ell$, the result follows in the same way as above. Note that $x^{(\ell^2-1)/2}-1 \equiv_{\ell} \prod_{P \in Q'} P$, where $Q'$ is the set of monic irreducible polynomials of degree $\leq 2$ in $\bF_{\ell}[x]$ whose roots are squares in~$\bF_{\ell^2}$. Thus, we aim to show that the roots of each $f_i$ are squares in~$\bF_{\ell^2}$. If $f_i$ has a root in~$\bF_{\ell}$, this is immediate, so assume $f_i$ is irreducible. It suffices to show that $x$ is a square in $\bF_{\ell^2} = \bF_{\ell}[x]/(f_i)$.

    Solving $x = (Ax + B)^2$ in $\bF_{\ell^2}$ for $A,B$ in $\bF_{\ell}$ yields
    \[
    B^2 - r^{-2}A^2 + x(2AB+r^{-2}\alpha^i A^2-1) = 0.
    \]
    Set $A = \pm rB$. Then the equation has a solution if and only if
    \begin{equation}\label{eq:B}
    B^2(\alpha^i \pm 2r) = 1
    \end{equation}
    has a solution for some $B \in \bF_{\ell}$. Since $f_i$ is irreducible, its discriminant $\alpha^{2i}-4q = (\alpha^i + 2r)(\alpha^i - 2r)$ is a non-square, so precisely one of $\alpha^i + 2r, \alpha^i - 2r$ is a square. Hence \eqref{eq:B} has a solution in $\bF_{\ell}$ and $x$ is a square, as desired.

    Finally, since $q$ is square, two of the $f_i$ are squares of linear factors. Therefore $D_q$ divides $(x^{(\ell^2-1)/2}-1)^{\ell} = x^{\ell(\ell^2-1)/2}-1$ mod~$\ell$, which is what we wanted to show.
\end{proof}

\begin{corollary}\label{cor:n_U}
    Let $\ell \geq 3$ be prime and assume $\ell \nmid q$.
     Let $s \geq 1$ and $1 \leq t \leq s$. 
     For any $k \geq 0$ and any $r \in \mathbb Z$, we have
    \[
    f_{r,m_{\ell,t},k} \equiv_{\ell^{s+1-t}} f_{r,m_{\ell,t},k+n_U}.
    \]
\end{corollary}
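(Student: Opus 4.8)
The plan is to apply the periodicity criterion of Lemma~\ref{lem:periodic_criterion} over the ring $R = \mathbb Z/\ell^{s+1-t}\mathbb Z$ to the generating series of the $f_{r,m_{\ell,t},k}$ furnished by Lemma~\ref{lem:series_rational}. Since $f_{r,m,k}$ depends on $r$ only through its residue modulo $m$, I would first reduce to the case $0 \le r < m_{\ell,t}$, so that Lemma~\ref{lem:series_rational} applies directly with $m = m_{\ell,t}$. Writing $d_{q,t}(x) = (1+qx^2)^{2m_{\ell,t}} - x^{2m_{\ell,t}}$, that lemma gives, for each parity $\delta \in \{0,1\}$, a polynomial $h_\delta(x) \in \mathbb Z[x]$ with $\deg h_\delta \le 4m_{\ell,t} - 2 - \delta$ and
\[
\sum_{\substack{k \ge 0 \\ k \equiv_2 \delta}} f_{r,m_{\ell,t},k}\, x^k = \frac{h_\delta(x)}{d_{q,t}(x)}.
\]
Since the constant term of $d_{q,t}$ is $1$, this is an identity in $\mathbb Z[\![x]\!] \cap \mathbb Z(x)$, and it stays valid after reducing coefficients modulo $\ell^{s+1-t}$, hence holds in $R[\![x]\!] \cap R(x)$.

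Next I would verify the hypotheses of Lemma~\ref{lem:periodic_criterion} over $R$ with $n = n_U$. The leading coefficient of $d_{q,t}$ is $q^{2m_{\ell,t}}$, which is a unit in $R$ because $\ell \nmid q$, and so in particular not a zero-divisor. By Lemma~\ref{lem:denom_divisible} the polynomial $d_{q,t}$ divides $x^{n_U}-1$ modulo $\ell^{s+1-t}$, i.e.\ it divides $x^{n_U}-1$ in $R[x]$. The criterion then yields $a_k = a_{k+n_U}$ in $R$ for all $k > \deg(h_\delta) - \deg(d_{q,t})$, where $a_k$ denotes the $k$-th coefficient of the series above.

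It then remains to check that this covers every $k \ge 0$, and here one must be slightly careful, since the criterion only guarantees periodicity for $k$ beyond a bound. Because $\deg(d_{q,t}) = 4m_{\ell,t}$ while $\deg(h_\delta) \le 4m_{\ell,t} - 2 - \delta$, we have $\deg(h_\delta) - \deg(d_{q,t}) \le -2 < 0$, so the inequality $k > \deg(h_\delta) - \deg(d_{q,t})$ holds for all $k \ge 0$. Since $\ell \ge 3$ is odd, $n_U$ is even, so $k+n_U$ and $k$ have the same parity; running over both $\delta = 0$ and $\delta = 1$ therefore identifies $a_k$ with $f_{r,m_{\ell,t},k}$ for every $k \ge 0$, and the two parity classes together give $f_{r,m_{\ell,t},k} \equiv_{\ell^{s+1-t}} f_{r,m_{\ell,t},k+n_U}$ for all $k \ge 0$.

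I do not expect a genuine obstacle: the real content has already been absorbed into Lemma~\ref{lem:denom_divisible}, whose proof established the crucial divisibility $d_{q,t} \mid x^{n_U}-1 \pmod{\ell^{s+1-t}}$. The only points requiring attention in assembling the corollary are confirming that the leading coefficient $q^{2m_{\ell,t}}$ is a unit modulo $\ell^{s+1-t}$ (so that Lemma~\ref{lem:periodic_criterion} applies), and the degree comparison $\deg(h_\delta) < \deg(d_{q,t})$, which is exactly what upgrades periodicity for large $k$ to periodicity from $k=0$ onward.
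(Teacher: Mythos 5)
Your proposal is correct and follows exactly the paper's own proof: combine Lemma~\ref{lem:series_rational} and Lemma~\ref{lem:denom_divisible} with $m = m_{\ell,t}$ and apply Lemma~\ref{lem:periodic_criterion} over $R = \mathbb Z/\ell^{s+1-t}\mathbb Z$ with $n = n_U$. The details you spell out (reducing $r$ modulo $m_{\ell,t}$, the leading coefficient $q^{2m_{\ell,t}}$ being a unit, the degree comparison giving periodicity from $k=0$, and the parity bookkeeping via evenness of $n_U$) are precisely the checks the paper leaves implicit.
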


\begin{proof}
    By Lemmas \ref{lem:series_rational} and~\ref{lem:denom_divisible}, with $m = m_{\ell,t}$, 
    we can 
    apply Lemma~\ref{lem:periodic_criterion} with $R = \mathbb Z/\ell^{s+1-t}\mathbb Z$ and $a_k = f_{r,m_{\ell,t},k} \pmod{\ell^{s+1-t}}$ to conclude that $a_k =a_{k+n_U}$ in $R$ for any $k \geq 0$.
\end{proof}

\begin{lemma}\label{lem:f_sum}
    Let $\ell \geq 3$ be prime, $s \geq 1$ and $1 \leq t \leq s$. For any $k \geq 0$ and any $r \in \mathbb Z$, we have 
    \[
    \sum_{i=0}^{\ell^{s-t}-1} f_{r+im_{\ell,t},m_{\ell,s},k} = f_{r,m_{\ell,t},k}.
    \]
\end{lemma}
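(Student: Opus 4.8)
The plan is to recognise this as a purely combinatorial reindexing identity, driven by the relation $m_{\ell,s} = \ell^{s-t}\, m_{\ell,t}$ between the two moduli. Writing $m := m_{\ell,t}$ and $M := m_{\ell,s}$, we have $M = \ell^{s-t} m$, so that $m \mid M$ and $M/m = \ell^{s-t}$. Expanding each summand on the left-hand side via the definition of $f$, the term $f_{r+im,M,k}$ is the partial sum of $\binom{k-j}{j}(-q)^j$ over those $0 \le j \le \lfloor k/2\rfloor$ with $j \equiv \lfloor k/2\rfloor - r - im \pmod{M}$. The strategy is to show that, as $i$ runs over $0, \dots, \ell^{s-t}-1$, these $\ell^{s-t}$ residue classes modulo $M$ are pairwise disjoint and together comprise exactly the single residue class $j \equiv \lfloor k/2\rfloor - r \pmod{m}$ that defines $f_{r,m,k}$.

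First I would verify the set-theoretic claim about residue classes. Since $m$ has additive order exactly $\ell^{s-t} = M/m$ in $\mathbb Z/M\mathbb Z$, the elements $\{\, im \bmod M : 0 \le i < \ell^{s-t}\,\}$ are precisely the distinct multiples of $m$ in $\mathbb Z/M\mathbb Z$, i.e.\ the cyclic subgroup generated by $m$; the same holds for $\{\, -im \bmod M\,\}$, since $-i$ also runs over a complete set of residues modulo $\ell^{s-t}$. Consequently the classes $\lfloor k/2\rfloor - r - im \pmod M$ are exactly the $\ell^{s-t}$ distinct classes modulo $M$ lying above $\lfloor k/2\rfloor - r \pmod m$. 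This is the standard decomposition of a residue class modulo $m$ into its refinements modulo a multiple $M$ of $m$, and every integer $j$ with $j \equiv \lfloor k/2\rfloor - r \pmod m$ lies in exactly one of them.

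With this partition in hand, the identity follows by interchanging the order of summation: summing $f_{r+im,M,k}$ over $i$ simply reassembles the sum of $\binom{k-j}{j}(-q)^j$ over all admissible $j$ in the single class $j \equiv \lfloor k/2\rfloor - r \pmod m$, which is the definition of $f_{r,m,k}$. No coefficient depends on $i$ except through the congruence constraint on $j$, so no cancellation or arithmetic beyond the reindexing is required. The only point requiring care --- and the nearest thing to an obstacle --- is the bookkeeping confirming that the $\ell^{s-t}$ classes modulo $M$ form a disjoint cover of the class modulo $m$; once that is checked, the proof is immediate.
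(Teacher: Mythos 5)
Your proof is correct and is essentially the same argument as the paper's: the paper likewise reduces the identity to the fact that, since $m_{\ell,s}/m_{\ell,t}=\ell^{s-t}$, every $j \equiv \lfloor k/2\rfloor - r \pmod{m_{\ell,t}}$ lies in exactly one class $\lfloor k/2\rfloor - r - im_{\ell,t} \pmod{m_{\ell,s}}$ with $0 \leq i < \ell^{s-t}$. Your write-up merely spells out the disjoint-cover bookkeeping that the paper leaves implicit.
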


\begin{proof}
    From the definition of $f_{r,m,k}$, we need to show that every $j \equiv \lfloor k/2\rfloor - r \pmod{m_{\ell,t}}$ can be written as $\lfloor k/2 \rfloor - r - im_{\ell,t} \pmod{m_{\ell,s}}$ for a unique $0 \leq i < \ell^{s-t}$. This follows because $m_{\ell,s}/m_{\ell,t} = \ell^{s-t}$. 
\end{proof}

We can now deduce the periodicity of $U(k+2)$.

\begin{lemma}\label{lem:U_periodic}
    Let $\ell \geq 3$ be prime, $s \geq 1$ and suppose $\ell \nmid q$. 
    For any $k \geq 0$, we have
    \[
    U(k+2) \equiv_{\ell^s} U(k + 2 + n_U).
    \]
\end{lemma}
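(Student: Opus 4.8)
The plan is to prove periodicity of $U(k+2)$ by reducing it to the periodicity of the coefficients $f_{r,m_{\ell,s},k}$, which the preceding lemmas handle directly. Recall from \eqref{eq:U(k+2)} that
\[
U(k+2) = \sum_{[(E,\phi)] \in \mathcal U(\bF_q)} \sum_{r=0}^{m_{\ell,s}-1} f_{r,m_{\ell,s},k} \cdot \frac{a_1(E)^{2r+\delta_k}}{\# \Aut_{\F_q}(E,\phi)}.
\]
Since $n_U$ is even in all cases (it is either $\ell^{s-1}(\ell^2-1)$ or $\ell^s(\ell^2-1)/2$, both divisible by $2$ as $\ell \geq 3$), we have $\delta_{k+n_U} = \delta_k$, so the exponent $2r+\delta_k$ of $a_1(E)$ and the range of $r$ are unchanged when we pass from $k$ to $k+n_U$. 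Therefore it suffices to show the coefficient-wise congruence $f_{r,m_{\ell,s},k} \equiv_{\ell^s} f_{r,m_{\ell,s},k+n_U}$ for every $0 \leq r < m_{\ell,s}$, and then sum against the common weights $a_1(E)^{2r+\delta_k}/\#\Aut_{\F_q}(E,\phi)$, which are fixed rational numbers with denominators coprime to $\ell$ (by the hypothesis $\ell \geq 5$ in the non-representable case, or triviality of automorphisms otherwise).

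The coefficient congruence is exactly Corollary~\ref{cor:n_U} in the case $t = s$: taking $t=s$ gives $m_{\ell,t} = m_{\ell,s}$ and the modulus $\ell^{s+1-t} = \ell$, yielding only $f_{r,m_{\ell,s},k} \equiv_{\ell} f_{r,m_{\ell,s},k+n_U}$. This is weaker than what I need, so the key point is that I must leverage the full strength of Corollary~\ref{cor:n_U} for \emph{smaller} values of $t$, where the modulus $\ell^{s+1-t}$ is larger, together with the summation identity of Lemma~\ref{lem:f_sum} to boost the modulus back up to $\ell^s$. The idea is to write each individual $f_{r,m_{\ell,s},k}$ as part of a sum over a coarser residue class: Lemma~\ref{lem:f_sum} expresses $f_{r',m_{\ell,t},k} = \sum_{i=0}^{\ell^{s-t}-1} f_{r'+im_{\ell,t},m_{\ell,s},k}$ for any $1 \leq t \leq s$, so the finer coefficients at level $m_{\ell,s}$ aggregate into coarser ones at level $m_{\ell,t}$, and the coarser ones enjoy the stronger congruence modulo $\ell^{s+1-t}$.

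The main obstacle, and the crux of the argument, is therefore the bookkeeping that trades off modulus strength against the granularity of the residue classes. My plan is to show that for each $a_1(E)$ with $[(E,\phi)] \in \mathcal U(\bF_q)$, the inner sum $\sum_{r=0}^{m_{\ell,s}-1} f_{r,m_{\ell,s},k}\, a_1(E)^{2r+\delta_k}$ is periodic mod $\ell^s$. Here I use that $a_1(E)^{2m_{\ell,s}} \equiv_{\ell^s} 1$ for $[(E,\phi)] \in \mathcal U(\bF_q)$ (from Notation~\ref{notation1}), so that $a_1(E)^{2r+\delta_k}$ depends only on $r \bmod m_{\ell,s}$ and the map $r \mapsto a_1(E)^{2r}$ factors through the group $(\bZ/\ell^s\bZ)^\times$. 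The order of $a_1(E)^2$ in this group divides $m_{\ell,s}$, say it equals $m_{\ell,t}$ for some $1 \leq t \leq s$; grouping the $r$'s according to the value of $a_1(E)^{2r+\delta_k}$ partitions $\{0,\dots,m_{\ell,s}-1\}$ into residue classes modulo $m_{\ell,t}$, and on each such class Lemma~\ref{lem:f_sum} collapses the sum of $f_{r,m_{\ell,s},k}$ into a single $f_{r',m_{\ell,t},k}$. This last quantity is congruent mod $\ell^{s+1-t}$ to its shift by $n_U$ by Corollary~\ref{cor:n_U}. Finally I match the modulus: when $a_1(E)^2$ has order $m_{\ell,t}$, its powers $a_1(E)^{2r}$ generate the cyclic subgroup of order $m_{\ell,t}$ in $(\bZ/\ell^s\bZ)^\times$, whose elements are congruent to $1$ modulo $\ell^{t-1}$ (being $\ell^{s-t}$-th powers in the $\ell$-part), so multiplying the $f_{r',m_{\ell,t},k}$ congruence modulo $\ell^{s+1-t}$ by the common weight $a_1(E)^{2r+\delta_k}$ — which carries a factor divisible by $\ell^{t-1}$ in its difference from a fixed residue — upgrades the congruence to modulus $\ell^s$. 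Assembling these weighted, collapsed sums over all $r$ and dividing by $\#\Aut_{\F_q}(E,\phi)$ then yields $U(k+2) \equiv_{\ell^s} U(k+2+n_U)$, completing the proof.
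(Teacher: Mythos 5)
Your reduction steps are fine (evenness of $n_U$, factoring out the automorphism weights, and the correct observation that Corollary~\ref{cor:n_U} at $t=s$ only gives a congruence mod $\ell$, so one must trade granularity against modulus), but the mechanism you propose for that trade-off has a genuine gap, in two places. First, the order of $a_1(E)^2$ in $(\bZ/\ell^s\bZ)^\times$ is an arbitrary divisor of $m_{\ell,s}=\ell^{s-1}(\ell-1)/2$ and need not equal $m_{\ell,t}$ for any $t$: for instance any curve with $a_1(E)\equiv \pm 1 \pmod{\ell^s}$ gives order $1$, which for $\ell\geq 5$ is not of the form $\ell^{t-1}(\ell-1)/2$; so the grouping into classes mod $m_{\ell,t}$ and the appeal to Lemma~\ref{lem:f_sum} and Corollary~\ref{cor:n_U} do not apply as stated. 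Second, and fatally, even when the order \emph{is} exactly $m_{\ell,t}$, your modulus upgrade fails: it is false that the elements of the cyclic subgroup of order $m_{\ell,t}$ of $(\bZ/\ell^s\bZ)^\times$ are congruent to $1$ modulo $\ell^{t-1}$. Take $\ell=5$, $s=2$, $a_1(E)=2$: then $a_1(E)^2=4$ has order $10=m_{5,2}$, the subgroup in question is the full group of squares mod $25$, namely $\{1,4,6,9,11,14,16,19,21,24\}$, and these are not all $\equiv 1 \pmod 5$. In this situation ($t=s$) your grouping is trivial, Corollary~\ref{cor:n_U} gives only a congruence modulo $\ell^{s+1-t}=\ell$, and multiplying a mod-$\ell$ congruence by units and summing cannot produce the required mod-$\ell^2$ congruence; there is simply no compensating factor $\ell^{t-1}$ available in your argument.

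The factor $\ell^{t-1}$ has to come from somewhere else, and this is what the paper's proof supplies: a base-$\ell$ \emph{digit} expansion rather than a multiplicative-order analysis. The paper forms the aggregated power sums $a_1|_{\mathcal U}^{j} = \sum_{[(E,\phi)]\in\mathcal U(\bF_q)} a_1(E)^j/\#\Aut(E,\phi)$ and writes $a_1|_{\mathcal U}^{j} \equiv_{\ell^s} \sum_{t=1}^s b_{j,t}\ell^{t-1}$ with $0\leq b_{j,t}<\ell$. Lagrange's theorem applied in $(\bZ/\ell^t\bZ)^\times$ for \emph{each} $t\leq s$ (not in $(\bZ/\ell^s\bZ)^\times$) shows that $a_1(E)^{j+2m_{\ell,t}}\equiv a_1(E)^j \pmod{\ell^t}$ for every unit $a_1(E)$, regardless of its actual order, hence the $t$-th digit satisfies $b_{j,t}=b_{j+2m_{\ell,t},t}$. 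This lets one factor the digit out of the $i$-sum in the decomposition $r = r'+im_{\ell,t}$, collapse $\sum_i f_{r'+im_{\ell,t},m_{\ell,s},k}$ to $f_{r',m_{\ell,t},k}$ via Lemma~\ref{lem:f_sum}, and then the explicit prefactor $\ell^{t-1}$ attached to digit $t$ exactly compensates the weaker congruence modulo $\ell^{s+1-t}$ from Corollary~\ref{cor:n_U}, giving $\ell^{t-1}\cdot\ell^{s+1-t}=\ell^s$ in every term. Your proposal correctly senses that such a compensation is the crux, but the vehicle you chose (the order of $a_1(E)^2$ together with the subgroup congruence claim) cannot produce it; replacing it by the digit decomposition is not a cosmetic fix but the essential missing idea.
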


\begin{proof}
    For $j \geq 0$, define
    \[
    a_1|_{\mathcal U}^{j} := \sum_{[(E,\phi)] \in \mathcal U(\bF_q)} \frac{a_1(E)^{j}}{\# \Aut(E,\phi)}
    \]
    and write $a_1|_{\mathcal U}^{j} \equiv_{\ell^s} \sum_{t=1}^s b_{j,t} \ell^{t-1}$ with $0 \leq b_{j,t} < \ell$. Then it follows from the definition of $\mathcal U(\bF_q)$ and Lagrange's theorem that $b_{j,t} = b_{j+2m_{\ell,t},t}$. Thus we have
    \begin{align*}
    U(k+2) &= \sum_{r=0}^{m_{\ell,s}-1} f_{r,m_{\ell,s},k} \,a_1|_{\mathcal U}^{2r+\delta_k} \\
    &\equiv_{\ell^s} \sum_{t=1}^s \ell^{t-1} \sum_{r=0}^{m_{\ell,s}-1} f_{r,m_{\ell,s},k}\, b_{2r+\delta_k,t} \\
    &\equiv_{\ell^s} \sum_{t=1}^s \ell^{t-1} \sum_{r=0}^{m_{\ell,t}-1} \sum_{i=0}^{\ell^{s-t}-1} f_{r+im_{\ell,t},m_{\ell,s},k} \,b_{2(r+im_{\ell,t})+\delta_k,t} \\
    &\equiv_{\ell^s} \sum_{t=1}^s \ell^{t-1} \sum_{r=0}^{m_{\ell,t}-1} f_{r,m_{\ell,t},k} \,b_{2r+\delta_k,t},
    \end{align*}
    where we used Lemma \ref{lem:f_sum} to obtain the last equality. Since $n_U$ is even, we have $\delta_k = \delta_{k+n_U}$. Thus the above expression shows that $U(k+2) \equiv_{\ell^s} U(k+2+n_U)$ by Corollary~\ref{cor:n_U}.
\end{proof}

\begin{proof}[Proof of Theorem~\ref{thm:trace_periodicity} when $H$ is representable, $\ell \nmid q$ and $\ell \geq 3$.]

Recall that $\mathrm{eis}_{H,k}(q)=\mathrm{eis}_{H,k+2i}(q)$ for any $i \geq 0$, by Theorem~\ref{theorem-trace-Sk}. 
Together with Lemmas~\ref{lem:trace_formula_modulo}, \ref{lem:N_periodic} and \ref{lem:U_periodic}, this gives the period $\text{lcm}(2,n_N,n_U) = n_U$, where $n_N \mid n_U$ because $\ell^2-1 \equiv_8 0$.
\end{proof}  

\subsection{The case of \texorpdfstring{$\ell=2$ and $\ell \nmid q$}{elltwo}}\label{sec:ell=2}
Recall that $m_{2,1}=1$. 

\begin{notation} For $\ell=2$ and $s\geq 1$ put $n'_{U} := \text{lcm}(2,\ell^{s-1}(\ell^2-1))$. 
\end{notation}

\begin{lemma}\label{lem:denom_divisible_even}
     Let $\ell =2$ and suppose $\ell \nmid q$.
     Let $s \geq 1$ and $1 \leq t \leq s$. Put $m = \ell^{t-1}(\ell-1)/2$. Then the polynomial $d_{q,t} := (1+qx^2)^{2m} - x^{2m}$ divides $x^{n'_{U}}-1$ modulo $\ell^{s+1-t}$.
\end{lemma}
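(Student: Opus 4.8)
The plan is to follow the proof of Lemma~\ref{lem:denom_divisible}, with the cyclotomic polynomial $\Phi_3(x) = 1 + x + x^2$ --- the unique monic irreducible quadratic over $\bF_2$ --- playing the role of $D_q$. Write $2m$ for the exponent appearing in $d_{q,t} = (1+qx^2)^{2m} - x^{2m}$; by the convention $m_{2,1} = 1$ we have $2m = 2^{t-1}$ for $t \geq 2$ and $2m = 2$ for $t = 1$, so in all cases $2m$ is a power of~$2$. Since $\ell \nmid q$ forces $q \equiv 1 \pmod 2$, reducing modulo~$2$ and applying Frobenius gives
\[
d_{q,t} \equiv (1+x^2)^{2m} - x^{2m} \equiv 1 + x^{2m} + x^{4m} = \Phi_3(x)^{2m} \pmod 2,
\]
the exact analogue of the congruence $d_{q,t} \equiv D_q^{\ell^{t-1}} \pmod{\ell}$ in the odd case. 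As $\ell^2 - 1 = 3$ and $\Phi_3(x)$ divides $x^3 - 1$ modulo~$2$, raising to the power $2m$ shows that $d_{q,t}$ divides $x^{3 \cdot 2m} - 1$ modulo~$2$.

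For $t \geq 2$ (so that $2m = 2^{t-1}$ and necessarily $s \geq 2$), this mod-$2$ divisibility has exponent $3 \cdot 2^{t-1}$, and applying Lemma~\ref{lem:div_upgrade} with $r = s - t$ upgrades it to divisibility of $x^{3 \cdot 2^{t-1} \cdot 2^{s-t}} - 1 = x^{3 \cdot 2^{s-1}} - 1$ modulo~$2^{s+1-t}$. Since $n'_U = 3 \cdot 2^{s-1}$ when $s \geq 2$, this is exactly the claim (when $t = s$ no upgrade is needed).

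The one case not covered by this argument is $t = 1$, which is the main subtlety: here $2m = 2$, so the mod-$2$ reduction only yields $\Phi_3^2 \mid x^6 - 1$, and upgrading by $r = s-1$ overshoots the target exponent by a factor of~$2$. To fix this I would work modulo~$4$ instead. Expanding gives $d_{q,1} = q^2 x^4 + (2q-1)x^2 + 1$, and since $q$ is odd we have $q^2 \equiv 1$ and $2q - 1 \equiv 1 \pmod 4$, so $d_{q,1} \equiv x^4 + x^2 + 1 \pmod 4$. As $x^4 + x^2 + 1 = (x^2+x+1)(x^2-x+1)$ divides $x^6 - 1$ over $\mathbb Z$, we obtain $d_{q,1} \mid x^6 - 1$ modulo~$4$. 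For $s = 1$ we reduce this modulo~$2$ to conclude (here $n'_U = 6$ and the target modulus is~$2$); for $s \geq 2$ we apply Lemma~\ref{lem:div_upgrade} with $r = s - 2$ (the case $s = 2$ being immediate) to obtain divisibility of $x^{6 \cdot 2^{s-2}} - 1 = x^{3 \cdot 2^{s-1}} - 1 = x^{n'_U} - 1$ modulo~$2^{s} = 2^{s+1-1}$, as required.
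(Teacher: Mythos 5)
Your proof is correct, and for $t \geq 2$ it coincides with the paper's argument: reduce mod~$2$, where $d_{q,t}$ becomes $(1+x+x^2)^{2^{t-1}}$ and hence divides $x^{3\cdot 2^{t-1}}-1$ mod~$2$, then upgrade via Lemma~\ref{lem:div_upgrade} with $r=s-t$. The only divergence is at $t=1$, and it stems from how you resolve the ambiguity in the statement's definition of $m$, which is a half-integer at $(\ell,t)=(2,1)$. The paper reads the formula literally, i.e.\ $2m=\ell^{t-1}(\ell-1)=1$ at $t=1$, so that $d_{q,1}=1-x+qx^2$; this is what its own proof uses and what matches the denominator in Equation~\eqref{eq-m21} of Corollary~\ref{cor:n_U_even}. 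With that reading no special case is needed: $d_{q,1}\equiv 1+x+x^2 \pmod 2$ divides $x^3-1$, and the upgrade with $r=s-1$ lands exactly on $x^{3\cdot 2^{s-1}}-1$ modulo $2^{s}$ (together with the trivial remark that $x^3-1$ divides $x^6-1$ when $s=1$). You instead invoke the convention $m_{2,1}=1$, so your $d_{q,1}=(1+qx^2)^2-x^2=(1-x+qx^2)(1+x+qx^2)$ is the product of the two conjugate factors. This version is genuinely stronger --- the factors are congruent, hence not coprime, mod~$2$, so it does not follow formally from the paper's version --- and your mod-$4$ argument ($d_{q,1}\equiv x^4+x^2+1 \pmod 4$, which divides $x^6-1$ over $\mathbb Z$, then upgrade from modulus $4$ with $r=s-2$) handles it correctly, compensating for the doubled degree by starting the upgrade one level higher. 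Since the paper's $d_{q,1}$ divides yours in $\mathbb Z[x]$, divisibility of $x^{n'_U}-1$ by yours implies divisibility by the paper's, so your argument does establish the stated lemma, and indeed a slightly stronger one at $t=1$.
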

\begin{proof}
    We have that $d_{q,1}=1+qx^2-x$ divides $x^{\ell^2-1}-1$ mod $\ell$ and hence that $d_{q,t} \equiv_{\ell} d_{q,1}^{\ell^{t-1}}$ divides $(x^{\ell^2-1}-1)^{\ell^{t-1}} = x^{\ell^{t-1}(\ell^2-1)}-1$.  
    We conclude by applying Lemma~\ref{lem:div_upgrade} with $r=s-t$.
\end{proof}

\begin{corollary}\label{cor:n_U_even}
    Let $\ell =2$ and assume $\ell \nmid q$.
     Let $s \geq 1$ and $1 \leq t \leq s$. 
     For any $k \geq 0$ and any $r \in \mathbb Z$, we have
    \[
    f_{r,m_{\ell,t},k} \equiv_{\ell^{s+1-t}} f_{r,m_{\ell,t},k+n'_U}.
    \]
\end{corollary}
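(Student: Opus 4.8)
The plan is to mirror the proof of Corollary~\ref{cor:n_U} verbatim, substituting the even-case ingredients. First I would fix a parity $\delta \in \{0,1\}$ and consider the generating series $\sum_{k \equiv_2 \delta} f_{r,m_{\ell,t},k}\, x^k$. By Lemma~\ref{lem:series_rational}, applied with $m = m_{\ell,t}$, this series equals $h(x)/d_{q,t}$, where $d_{q,t} = (1+qx^2)^{2m_{\ell,t}} - x^{2m_{\ell,t}}$ and $\deg(h) \leq 4m_{\ell,t} - 2 - \delta$. Note that $m_{2,1} = 1$, so $d_{q,t}$ has degree $4m_{\ell,t} \geq 4$ for all $1 \leq t \leq s$, and its leading coefficient is $q^{2m_{\ell,t}}$, which is a unit modulo $\ell^{s+1-t}$ because $\ell \nmid q$; in particular it is not a zero-divisor in $R := \mathbb Z/\ell^{s+1-t}\mathbb Z$.

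Next I would invoke Lemma~\ref{lem:denom_divisible_even}, the exact $\ell=2$ analogue of Lemma~\ref{lem:denom_divisible}, which states that $d_{q,t}$ divides $x^{n'_U} - 1$ modulo $\ell^{s+1-t}$. This is precisely the hypothesis required by Lemma~\ref{lem:periodic_criterion} over the ring $R$. Reducing the identity of Lemma~\ref{lem:series_rational} modulo $\ell^{s+1-t}$ and applying Lemma~\ref{lem:periodic_criterion} with $a_k := f_{r,m_{\ell,t},k} \bmod \ell^{s+1-t}$ (so that $a_k = 0$ for $k \not\equiv_2 \delta$) and $n = n'_U$, I obtain $a_k = a_{k+n'_U}$ in $R$ for all $k > \deg(h) - \deg(d_{q,t})$. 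Since $\deg(h) - \deg(d_{q,t}) \leq -2 - \delta < 0$, this holds for every $k \geq 0$.

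Finally, I would observe that $n'_U = \mathrm{lcm}(2, \ell^{s-1}(\ell^2-1))$ is even, so shifting $k$ by $n'_U$ preserves the parity $\delta$; hence for $k \equiv_2 \delta$ we get $f_{r,m_{\ell,t},k} = a_k = a_{k+n'_U} = f_{r,m_{\ell,t},k+n'_U}$ in $R$. Running this over both values $\delta \in \{0,1\}$ covers every $k \geq 0$, which gives the claim. There is no genuine obstacle here beyond the degree and parity bookkeeping: all the structural work lives in Lemmas~\ref{lem:series_rational}, \ref{lem:periodic_criterion}, and~\ref{lem:denom_divisible_even}, and the only substantive difference from Corollary~\ref{cor:n_U} is that for $\ell = 2$ the smaller group $(\mathbb Z/\ell^s\mathbb Z)^\times$ forces the period $n'_U$ in place of $n_U$.
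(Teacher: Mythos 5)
Your argument coincides with the paper's proof for $t \geq 2$, but it has a genuine gap at $t=1$ --- and this is exactly the case that forces the paper's own proof to split in two (note that for $s=1$, $t=1$ is the \emph{only} case, so the whole corollary rests on it there). The problem is a clash of normalisations. In Lemma~\ref{lem:denom_divisible_even}, the parameter $m$ is defined as $\ell^{t-1}(\ell-1)/2$, which for $\ell=2$, $t=1$ equals $1/2$, so the polynomial that lemma controls is $d_{q,1} = (1+qx^2)^{2m}-x^{2m} = 1+qx^2-x$, of degree~$2$. By contrast, Notation~\ref{notation1} sets $m_{2,1}=1$, so the denominator produced by Lemma~\ref{lem:series_rational} for your parity-split series at $t=1$ is $(1+qx^2)^{2}-x^{2}$, of degree~$4$. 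The two conventions agree only for $t \geq 2$, so your assertion that Lemma~\ref{lem:denom_divisible_even} ``states that $d_{q,t}$ divides $x^{n'_U}-1$ modulo $\ell^{s+1-t}$'' is a misreading in the one case where it matters. In fact $(1+qx^2)^2-x^2$ is the lemma's $d_{q,2}$, so the most the lemma gives you about it is divisibility of $x^{n'_U}-1$ modulo $2^{s-1}$ (its $t=2$ case), one power of~$2$ short of the modulus $2^s$ you need. Nor can you close the gap by arguing that each factor $1+qx^2\mp x$ divides $x^{n'_U}-1$ mod $2^s$ and hence so does their product: the two factors are congruent mod~$2$, so they are not coprime, and indeed B\'ezout only gives $Pu+Qv = 4q$ (resultant), losing exactly the powers of $2$ at stake.

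The statement you need at $t=1$ is true, but it requires an argument you did not supply. For instance: since $n'_U$ is even, substitute $w=x^2$; then $(1+qx^2)^2-x^2 = q^2w^2+(2q-1)w+1 \equiv w^2+w+1 \pmod{4}$ for odd $q$, which divides $w^3-1$, and Lemma~\ref{lem:div_upgrade} (applied with base modulus $4$ and $r=s-2$) gives that it divides $w^{3\cdot 2^{s-2}}-1 = w^{n'_U/2}-1$ modulo $2^s$ for $s\geq 2$; pulling back along $w=x^2$ yields the required divisibility. The crucial gain of one power of $2$ comes from working modulo $4$ rather than modulo $2$ at the base level, which is invisible in your argument. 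The paper sidesteps all of this: for $t=1$ it abandons the parity-split series and instead uses the full generating function \eqref{eq-m21}, $\sum_{k\geq 0}f_{0,1,k}x^k = 1/(1+qx^2-x)$ (legitimate because $m_{2,1}=1$ makes the residue condition on $j$ vacuous, so splitting by parity is unnecessary), whose denominator is precisely the lemma's degree-$2$ polynomial $d_{q,1}$, and then applies Lemmas~\ref{lem:div_upgrade} and~\ref{lem:periodic_criterion} to that.
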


\begin{proof}
    If $t \geq 2$ we apply Lemma~\ref{lem:periodic_criterion} with $R = \mathbb Z/\ell^{s+1-t}\mathbb Z$ and $a_k = f_{r,m_{\ell,t},k} \pmod{\ell^{s+1-t}}$. The criteria of Lemma~\ref{lem:periodic_criterion} are satisfied with $n = \ell^{s-1}(\ell^2-1)$ for $t\geq 2$ by Lemma~\ref{lem:denom_divisible_even} together with Lemma \ref{lem:series_rational}.
    If $t=1$ then $m_{2,1}=1$ and we have   
    \begin{equation} \label{eq-m21}
    \sum_{k \geq 0} f_{0,1,k} \cdot x^{k} = \frac{1}{1+qx^2-x}.
    \end{equation}
    The denominator divides $x^{\ell^2-1}-1$ mod $\ell$, so applying Lemma~\ref{lem:div_upgrade} with $r = s-t$ lets us argue as in the case $t \geq 2$.
\end{proof}

\begin{lemma}\label{lem:f_sum_even}
    Let $\ell=2$, $s \geq 2$ and $2 \leq t \leq s$. For any $k \geq 0$ and any $r \in \mathbb Z$, we have 
    \[
    \sum_{i=0}^{\ell^{s-t}-1} f_{r+im_{\ell,t},m_{\ell,s},k} = f_{r,m_{\ell,t},k}.
    \]
    For $t=1$ we have 
    $\sum_{i=0}^{m_{\ell,s}-1} f_{r+im_{\ell,1},m_{\ell,s},k} = f_{0,1,k}. $
\end{lemma}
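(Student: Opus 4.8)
The plan is to mirror the proof of Lemma~\ref{lem:f_sum} almost verbatim, since the essential combinatorial content is the same partition-of-residue-classes argument. Recall that $f_{r,m,k}$ is the sum of $\binom{k-j}{j}(-q)^j$ over those $0 \le j \le \lfloor k/2\rfloor$ with $j \equiv \lfloor k/2\rfloor - r \pmod{m}$. Expanding the left-hand side, $\sum_i f_{r+im_{\ell,t},m_{\ell,s},k}$ collects the terms $\binom{k-j}{j}(-q)^j$ for all $j$ satisfying $j \equiv \lfloor k/2\rfloor - r - i m_{\ell,t} \pmod{m_{\ell,s}}$ for some $i$ in the index range, so everything reduces to counting how often each admissible $j$ is hit.

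First I would handle the main case $2 \le t \le s$. Here $m_{\ell,t} = 2^{t-2}$ and $m_{\ell,s} = 2^{s-2}$ divide one another with $m_{\ell,s}/m_{\ell,t} = 2^{s-t} = \ell^{s-t}$, exactly as in Lemma~\ref{lem:f_sum}. Reducing the congruence $j \equiv \lfloor k/2\rfloor - r - i m_{\ell,t} \pmod{m_{\ell,s}}$ modulo $m_{\ell,t}$ shows that any $j$ appearing must satisfy $j \equiv \lfloor k/2\rfloor - r \pmod{m_{\ell,t}}$; conversely, for each such $j$ there is a unique $i$ with $0 \le i < \ell^{s-t}$ realizing the stronger congruence modulo $m_{\ell,s}$. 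Hence each admissible $j$ is counted exactly once, and the sum equals $f_{r,m_{\ell,t},k}$.

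Next I would treat the case $t=1$ separately, which is where the only subtlety lies. The value $m_{2,1}=1$ is an exceptional definition that does \emph{not} follow the formula $m_{\ell,s} = \ell^{s-1}(\ell-1)/2$, so the number of summands is governed by $m_{\ell,s}/m_{\ell,1} = m_{\ell,s}/1 = m_{\ell,s} = 2^{s-2}$ rather than by $\ell^{s-1}$; this is precisely why the index $i$ in the statement runs from $0$ to $m_{\ell,s}-1$. With $m_{\ell,1}=1$ the congruence modulo $m_{\ell,1}$ is vacuous, so the same partition argument applies: as $i$ ranges over $0,\dots,m_{\ell,s}-1$, the residues $\lfloor k/2\rfloor - r - i$ run over a complete set modulo $m_{\ell,s}$, and hence the terms collect every $0 \le j \le \lfloor k/2\rfloor$ exactly once. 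The total is therefore the full sum $f_{0,1,k}$. (Note $f_{r,1,k} = f_{0,1,k}$ for every $r$, since the mod-$1$ condition places no restriction on $j$, which is why the right-hand side does not depend on $r$.)

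The only thing requiring genuine care is the bookkeeping around the exceptional value $m_{2,1}=1$: one must verify that the number of terms in each sum matches the index $m_{\ell,s}/m_{\ell,t}$ of the underlying bijection on residue classes, which for $t \ge 2$ is $\ell^{s-t}$ and for $t=1$ is $m_{\ell,s}$. Beyond this arithmetic check, the argument is identical to that of Lemma~\ref{lem:f_sum} and introduces no new ingredients.
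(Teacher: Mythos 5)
Your proposal is correct and takes essentially the same approach as the paper: for $t \geq 2$ the paper simply invokes the residue-class partition argument of Lemma~\ref{lem:f_sum} (valid since $m_{\ell,s}/m_{\ell,t} = \ell^{s-t}$), and for $t=1$ it observes, just as you do, that the mod-$1$ condition is vacuous and the shifts $r+i$ for $0 \leq i \leq m_{\ell,s}-1$ exhaust all residues mod $m_{\ell,s}$, so the sum collects every $0 \leq j \leq \lfloor k/2 \rfloor$ exactly once and equals $f_{0,1,k}$. Your extra bookkeeping around the exceptional value $m_{2,1}=1$ is accurate and matches the paper's intent.
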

\begin{proof}
    If $t \geq 2$, this follows in the same way as Lemma~\ref{lem:f_sum}. For $t=1$, the definition of $f_{r,m,k}$ gives 
        \[
    \sum_{i=0}^{m_{\ell,s}-1} f_{r+im_{\ell,1},m_{\ell,s},k}=\sum_{i=0}^{m_{\ell,s}-1} f_{r+i,m_{\ell,s},k}=f_{0,1,k}.\qedhere
    \]
\end{proof}

\begin{lemma}\label{lem:U_periodic_even}
    Let $\ell=2$, $s \geq 1$ and suppose $\ell \nmid q$. For any $k \geq 0$, we have
    \[
    U(k+2) \equiv_{\ell^s} U(k + 2 + n'_U).
    \]
\end{lemma}

\begin{proof}
    For $j \geq 0$, define
    \[
    a_1|_{\mathcal U}^{j} := \sum_{[(E,\phi)] \in \mathcal U(\bF_q)} \frac{a_1(E)^{j}}{\# \Aut(E,\phi)}
    \]
    and write $a_1|_{\mathcal U}^{j} \equiv_{\ell^s} \sum_{t=1}^s b_{j,t} \ell^{t-1}$ with $0 \leq b_{j,t} < \ell$. Then it follows from the definition of $\mathcal U(\bF_q)$ and Lagrange's theorem that $b_{j,t} = b_{j+2m_{\ell,t},t}$. Thus we have
    \begin{align*}
    U(k+2) &= \sum_{r=0}^{m_{\ell,s}-1} f_{r,m_{\ell,s},k} \,a_1|_{\mathcal U}^{2r+\delta_k} \\
    &\equiv_{\ell^s} \sum_{t=1}^s \ell^{t-1} \sum_{r=0}^{m_{\ell,s}-1} f_{r,m_{\ell,s},k}\, b_{2r+\delta_k,t} \\
    &\equiv_{\ell^s} \sum_{r=0}^{m_{\ell,s}-1} f_{r,m_{\ell,s},k} \,b_{2r+\delta_k,1}+\sum_{t=2}^s \ell^{t-1} \sum_{r=0}^{m_{\ell,t}-1} \sum_{i=0}^{\ell^{s-t}-1} f_{r+im_{\ell,t},m_{\ell,s},k} \,b_{2(r+im_{\ell,t})+\delta_k,t} \\
    &\equiv_{\ell^s} f_{0,1,k} \,b_{\delta_k,1}+\sum_{t=2}^s \ell^{t-1} \sum_{r=0}^{m_{\ell,t}-1} f_{r,m_{\ell,t},k} \,b_{2r+\delta_k,t},
    \end{align*}
    where we used Lemma \ref{lem:f_sum_even} to obtain the last equality. Since $n'_U$ is even, we have $\delta_k = \delta_{k+n'_U}$. Thus the above expression shows that $U(k+2) \equiv_{\ell^s} U(k+2+n'_U)$ by Corollary \ref{cor:n_U_even}.
\end{proof}

\begin{proof}[Proof of Theorem~\ref{thm:trace_periodicity} when $H$ is representable, $\ell \nmid q$ and $\ell =2$.] 

Recall that $\mathrm{eis}_{H,k}(q)=\mathrm{eis}_{H,k+2i}(q)$ for any $i \geq 0$, by Theorem~\ref{theorem-trace-Sk}.
By Lemmas~\ref{lem:trace_formula_modulo}, \ref{lem:N_periodic} and~\ref{lem:U_periodic_even}, the period is $\text{lcm}(2,n_N,n'_{U})$, which gives the result. 
\end{proof}

\subsection{The case of \texorpdfstring{$\ell \mid q$}{ellmidq}}\label{sec:ell=p}

\begin{proof}[Proof of Theorem~\ref{thm:trace_periodicity} when $H$ is representable and $\ell \mid q$.]
    We argue as in the case $\ell \neq p$. Since $\ell \mid q$, we see from Equation~\eqref{eq:N(k+2)} that $N(k+2) \equiv_{\ell^s} 0$ for $k$ large enough; in fact, this holds for $k \geq 2s-\delta_k$ 
    if $a = 1$, and for $k \geq s$ if $a \geq 2$.
    
    For $U(k+2)$, set $1 \leq t \leq s$, $m = m_{\ell,t}$, and $n =
    \ell^{s-1}(\ell-1)$. We aim to show that
    \begin{equation}\label{eq:l=p-congruence}
    f_{r,m,k} \equiv_{\ell^{s+1-t}} f_{r,m,k+n},
    \end{equation}
    from which we obtain that $U(k+2) \equiv_{\ell^s} U(k+2+n)$ by the argument from Lemma \ref{lem:U_periodic}.
    By definition of $f_{r,m,k}$, we have for all $k \geq 2\lfloor (s-1)/a \rfloor$,
    \[
    f_{r,m,k} = \sum_{\substack{0 \leq j \leq \lfloor k/2 \rfloor \\ j \equiv \lfloor k/2 \rfloor -r \pmod{m} }} \binom{k-j}{j} (-q)^{j} \equiv_{\ell^s} \sum_{\substack{0 \leq j \leq s-1 \\ j \equiv \lfloor k/2 \rfloor -r \pmod{m} }} \binom{k-j}{j} (-q)^{j}.
    \]
    Put $m=m_{\ell,s}$. Either $n/2$ is divisible by $m_{\ell,s}$ or $\ell^s = 2$; in both cases the above gives 
    \[
    f_{r,m,k+n} - f_{r,m,k} \equiv_{\ell^s} \sum_{\substack{0 \leq j \leq s-1 \\ j \equiv \lfloor k/2 \rfloor -r \pmod{m} }} \left( \binom{k-j+n}{j} - \binom{k-j}{j} \right) (-q)^{j},
    \]
    so it suffices to show that
    \[
    \binom{k-j}{j} \equiv_{\ell^{s-j}} \binom{k-j+n}{j}.
    \]
    Since $\ell^{s-1} \mid n$, this follows from Lemma~\ref{lem:lucas}, using that $j \geq 1 + \lfloor \log_\ell(j)\rfloor$ for all $j \geq 1$. 

    Finally, $\mathrm{eis}_{H,k}(q)$ is a sum of contributions of the form $(\pm 1)^k$, so depends only on the parity of~$k$. The only case when $n$ is odd is when $\ell^s = 2$. But then $1 \equiv_{\ell^s} -1$, so $\mathrm{eis}_{H,k}(q)\equiv_{\ell^s} \mathrm{eis}_{H,k+n}(q)$ for all $k\geq 0$.
\end{proof}

\subsection{The case when $H$ is not representable and \texorpdfstring{$\ell=2$ or $3$}{ell23}} \label{sec-notrepresentable}

\begin{proof}[Proof of Theorem~\ref{thm:trace_periodicity} when $H$ is not representable and $\ell=2$ or $3$.]
Define $N(k+2)$ and $U(k+2)$ as in Lemma~\ref{lem:trace_formula_modulo}. 
Say that $\ell=3$, put $\tilde s=s+\nu_3(H,q)$ and replace $s$ by~$\tilde s$ in the definition of~$n$. Following the proof of Lemma~\ref{lem:N_periodic} we see that
    \[
     \biggl(\binom{(k+\delta_k)/2+j}{2j+\delta_k}-\binom{(k+\delta_k)/2+j+n/2}{2j+\delta_k}
     (-q)^{n/2}\biggr) a_1(E)^{2j+\delta_k} \equiv_{\ell^{\tilde s}} 0
    \]
for all $0 \leq j \leq \lfloor (\tilde s-1-\delta_k)/2\rfloor$ and $[(E,\phi)] \in \mathcal N(\bF_q)$.
After dividing this by $\# \Aut(E,\phi)$ and summing over all $[(E,\phi)] \in \mathcal N(\bF_q)$ it follows that 
\[
N(k+2)-N(k+2+n) \equiv_{\ell^{s}} 0. 
\]
Write $a_1(E)^{j} \equiv_{\ell^s} \sum_{t=1}^s b_{j,t}(E) \ell^{t-1}$  with $0 \leq b_{j,t} < \ell$, for any $[(E,\phi)] \in \mathcal U(\bF_q)$. Following the proof of Lemma~\ref{lem:U_periodic} we see that 
\[
\sum_{r=0}^{m_{\ell,\tilde s}-1} (f_{r,m_{\ell, \tilde s},k}- f_{r,m_{\ell, \tilde s},k+n}) \,a_1(E)^{2r+\delta_k}     \equiv_{\ell^{\tilde s}} \sum_{t=1}^{\tilde s} \ell^{t-1} \sum_{r=0}^{m_{\ell,t}-1} (f_{r,m_{\ell,t},k}-f_{r,m_{\ell,t},k+n}) \,b_{2r+\delta_k,t},
\equiv_{\ell^{\tilde s}} 0.
\]
After dividing this by $\# \Aut(E,\phi)$ and summing over all $[(E,\phi)] \in \mathcal N(\bF_q)$ it follows that 
\[
U(k+2)-U(k+2+n) \equiv_{\ell^{s}} 0.
\]
The statement in Theorem~\ref{thm:trace_periodicity} then follows in this case by Theorem~\ref{theorem-trace-Sk}.
The proof for $\ell=2$ is analogous, but uses Lemma~\ref{lem:U_periodic_even} instead of~\ref{lem:U_periodic}. 

If $\ell=2$ and $-\mathrm{Id}\in H$, then $-1 \in \Aut_{\F_q}(E,\phi)$ for any $(E,\phi) \in \mathcal Y_H(\bF_q)$. 
Twisting $(E,\phi) \in \mathcal Y_H(\bF_q)$ by the automorphism $-1$ gives an element  $(E^{(-1)},\phi) \in \mathcal Y_H(\bF_q)$ with $a_1(E^{(-1)})=-a_1(E)$. If we fix $t \neq 0$ then sending $E$ to $E^{(-1)}$ induces a bijection between the elements $[(E,\phi)] \in [\mathcal Y_H(\bF_q)]$ with $a_1(E)=t$ and the elements $[(E',\phi')] \in [\mathcal Y_H(\bF_q)]$ such that $a_1(E')=-t$. 
This gives an extra divisibility by~$2$ when going through the arguments above. 

Finally, in the case $\ell = p$, the arguments from Section~\ref{sec:ell=p} work also in characteristic~$2$ (respectively~$3$), with $s$ replaced by $s+\nu_2(H,q)$ or $s + \nu_2(H,q)-1$ if $-\mathrm{Id} \in H$ (respectively $s+\nu_3(H,q)$).
\end{proof}

\section{Proof of the main theorem for Drinfeld modular forms}\label{sec:ff-proof}
In this section we prove Theorem~\ref{thm:trace_periodicity_ff} for Drinfeld modular forms. The existence of a Frobenius map on~$A$ makes many statements easier, while types complicate matters.

\begin{notation} Throughout this section, we use the following notation.
    \begin{itemize}
    \item Fix $q$, a power of a prime~$p$. 
    \item Put $A = \mathcal O_C(C \setminus \{\infty\})$ for a smooth geometrically connected curve $C/\bF_q$, where $\infty$ is a point of degree~1. 
    \item Fix a compact open subgroup ${\mathcal K}$ of level~$\mathfrak n \trianglelefteq A$. 
     Fix non-zero prime ideals $\mathfrak p \nmid \mathfrak n$ and $\mathfrak l$ of~$A$ and an integer $s \geq 1$.
        \item Write $\tilde{s} := \lceil \log_p(s) \rceil$. Put $m_{\l,1} = |\l|-1$ if $p=2$ and if $(p,s) \neq (2,1)$ then write $m_{\l,s} := p^{\tilde{s}} (|\l|-1)/2$.
        \item Let $n \geq 1$ such that $\mathfrak p^n$ is principal. Denote by $\wp_n$ the unique monic generator of~$\mathfrak p^n$.
        \item Let
        \[
    \mathcal N(\bF_{\mathfrak p^n}) := \{ [(\phi,\lambda)] \in [\mathfrak M_{{\mathcal K},\mathfrak p}(\bF_{\mathfrak p^n})] \, | \, a_{\phi} \equiv_\l 0 \}, 
\qquad 
    \mathcal U(\bF_{\mathfrak p^n}) := [\mathfrak M_{{\mathcal K},\mathfrak p}(\bF_{\mathfrak p^n})] \setminus \mathcal N(\bF_{\mathfrak p^n}).
\]
        \item For any triple $(r,m,k) \in \mathbb Z^3$ with $k \geq 0$ and $m\geq 1$ and any $b \in \mathbb F_q^\times$, write
        \[
        g^{(b)}_{r,m,k} = \sum_{\substack{0 \leq j \leq \lfloor k/2 \rfloor \\ j \equiv r \pmod{m}}} \binom{k-j}{j} (-b\wp_n)^j.
        \]
\item 
Let $\delta_k := 0$ if $k$ is even and $\delta_k := 1$ if $k$ is odd.
    \end{itemize}
\end{notation}

\begin{lemma} \label{lem-exp}
    We have 
    \[
    \exp\bigl((A/\l^sA)^\times \bigr) = p^{\tilde{s}}(|\l|-1).
    \]
\end{lemma}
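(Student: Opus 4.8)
The plan is to analyse the abelian group $(A/\l^s A)^\times$ through its decomposition into $p$-primary and prime-to-$p$ parts, exploiting that $A$ has characteristic~$p$. Since $A$ is a Dedekind domain and $\l$ is maximal, the localisation $A_\l$ is a discrete valuation ring and $A/\l^s A \cong A_\l/\l^s A_\l$ is a local Artinian ring with maximal ideal $\mathfrak m := \l A_\l/\l^s A_\l$ and residue field $A/\l$ of cardinality $|\l|$. First I would record the short exact sequence
\[
1 \to (1+\mathfrak m) \to (A/\l^s A)^\times \to (A/\l)^\times \to 1,
\]
whose kernel $1+\mathfrak m$ has order $|\l|^{s-1}$ (a power of~$p$) and whose quotient $(A/\l)^\times$ is cyclic of order $|\l|-1$, coprime to~$p$ because $|\l|$ is a power of~$q$. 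As the group is abelian, it splits as the internal direct product of its $p$-primary part $1+\mathfrak m$ and the cyclic prime-to-$p$ factor, so its exponent equals $(|\l|-1)\cdot\exp(1+\mathfrak m)$.

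The key step is then to compute $\exp(1+\mathfrak m)$, where the characteristic-$p$ hypothesis does the work. For any $x \in \mathfrak m$ and any $t \geq 0$ one has $(1+x)^{p^t} = 1 + x^{p^t}$ by the Frobenius identity, so $(1+x)^{p^t}=1$ in $A/\l^s A$ exactly when $v_\l(x^{p^t}) = p^t\, v_\l(x) \geq s$. Taking $x$ to be a uniformiser, so that $v_\l(x)=1$, this holds if and only if $p^t \geq s$, i.e.\ if and only if $t \geq \tilde s = \lceil \log_p s\rceil$; hence $1+x$ has order exactly $p^{\tilde s}$. Conversely every $x \in \mathfrak m$ satisfies $v_\l(x)\geq 1$, so $p^{\tilde s}v_\l(x)\geq p^{\tilde s}\geq s$ and $(1+x)^{p^{\tilde s}}=1$. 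Therefore $\exp(1+\mathfrak m)=p^{\tilde s}$.

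Combining the two steps yields $\exp\bigl((A/\l^s A)^\times\bigr) = p^{\tilde s}(|\l|-1)$, as claimed. The argument is short and I do not anticipate a serious obstacle; the only points requiring mild care are the reduction $A/\l^s A \cong A_\l/\l^s A_\l$ and the observation that $|\l|-1$ is prime to~$p$, which together guarantee that the $p$-primary and prime-to-$p$ parts meet trivially, so that the exponent of the product is the product of the two exponents.
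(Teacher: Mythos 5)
Your proof is correct. It rests on the same two pillars as the paper's — the identification $A/\l^s A \cong A_\l/\l^s A_\l \cong \mathbb F_\l[\![\pi]\!]/(\pi^s)$ and the additivity of Frobenius in characteristic $p$ — but organizes them differently. The paper takes an arbitrary unit $f$, raises it to the power $|\l|-1$ so that $f^{|\l|-1} = 1 + a\pi$, and applies Frobenius to get $(f^{|\l|-1})^{p^{\tilde s}} = 1$; this gives the divisibility $\exp\bigl((A/\l^s A)^\times\bigr) \mid p^{\tilde s}(|\l|-1)$, and equality is then asserted by exhibiting the single element $\zeta + \pi$ (with $\zeta$ a lift of a generator of $\mathbb F_\l^\times$) and claiming that no proper divisor of $p^{\tilde s}(|\l|-1)$ annihilates it. You instead split the abelian group into its $p$-primary part $1+\mathfrak m$ and its prime-to-$p$ part, isomorphic to $(A/\l)^\times$, which reduces the whole lemma to the computation $\exp(1+\mathfrak m) = p^{\tilde s}$, verified on a uniformizer. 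What your decomposition buys is a more transparent lower bound: since the two factors have coprime exponents, the exponent of the product is automatically the product of the two exponents, so you never need to track the order of a mixed element such as $\zeta+\pi$ — precisely the step the paper leaves to the reader. The cost is a small amount of group-theoretic bookkeeping (the splitting into primary components), which the paper's element-by-element argument avoids.
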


\begin{proof}
    Let $f \in (A/\l^sA)^\times$. Localization at $\l$ gives a ring isomorphism $A/\l^sA \cong A_{\l}/\l^sA_{\l} \cong \mathbb F_{\l}[\![\pi]\!]/(\pi^s)$, where $\pi$ denotes a local uniformizer.
    Since $\bF_{\l}^\times$ is cyclic of order $|\l|-1$, we may write $f^{|\l|-1} = 1 + a\pi$ under this isomorphism. Since the Frobenius map is additive, we see that $(f^{|\l|-1})^{p^n} \equiv 1 \pmod{\pi^s}$ whenever $p^n \geq s$, which shows that $\exp((A/\l^sA)^\times)$ divides $p^{\tilde{s}}(|\l|-1)$. In fact equality holds: if $\zeta$ is a generator of $\bF_{\l}^\times$, then the element $(\zeta + \pi)^n \not \equiv_{\pi^s} 1$ for any proper divisor $n$ of $p^{\tilde{s}}(|\l|-1)$. 
\end{proof}

 Theorem~\ref{thm:dmf_traceform_general} yields the following expression for traces of Hecke operators mod $\l^s$. It is proved
 in the same way as Lemma~\ref{lem:trace_formula_modulo}.

\begin{lemma}\label{lem:trace_formula_modulo_ff}
For any maximal ideal $\l \trianglelefteq A$, $s \geq 1$ and any $k \geq s-1$, we have
\[
\Tr(\T_{\mathfrak p}^n \, | \, \S_{k+2,l}({\mathcal K})) \equiv_{\l^s} -N(k+2,l) - U(k+2,l),
\]
where
\begin{align*} 
N(k+2,l) &:= \sum_{[(\phi,\lambda)] \in \mathcal N(\bF_{\mathfrak p^n})} \sum_{j=0}^{\lfloor \frac{s-1-\delta_k}{2} \rfloor} \binom{(k+\delta_k)/2+j}{2j+\delta_k} (-b_{\phi}\wp_n)^{(k-\delta_k)/2-j} \frac{a_{\phi}^{2j+\delta_k}b_{\phi}^{l-k-1}}{\#\Aut_{\mathbb F_{\mathfrak p^n}}(\phi,\lambda)};\\
    U(k+2,l) &:= \sum_{[(\phi,\lambda)] \in \mathcal U(\bF_{\mathfrak p^n})} \sum_{r=0}^{m_{\l,s}-1} g^{(b_{\phi})}_{\lfloor k/2\rfloor - r,m_{\l,s},k} \frac{a_{\phi}^{2r+\delta_k} b_{\phi}^{l-k-1}}{\#\Aut_{\mathbb F_{\mathfrak p^n}}(\phi,\lambda)}.
\end{align*}
\end{lemma}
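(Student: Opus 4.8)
The plan is to follow the proof of Lemma~\ref{lem:trace_formula_modulo} essentially verbatim, starting from the trace formula of Theorem~\ref{thm:dmf_traceform_general} in place of Theorem~\ref{theorem-trace-Sk}. The first simplification specific to the function field setting is that no hypothesis on $\l$ is needed to invert the automorphism factors: by Remark~\ref{rmk:aut=-1} we have $\#\Aut_{\F_{\mathfrak p^n}}(\phi,\lambda) \equiv -1 \pmod{p}$, and since $A/\l^s$ is an $\F_p$-algebra this integer is a unit in $A/\l^s$ for every $[(\phi,\lambda)]$. Thus all denominators appearing below make sense mod $\l^s$, with no representability assumption.

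Next I would perform the same change of variables as in Lemma~\ref{lem:trace_formula_modulo}. Writing the inner sum of Theorem~\ref{thm:dmf_traceform_general} attached to a fixed $(\phi,\lambda)$ as $\sum_{j=0}^{\lfloor k/2\rfloor}\binom{k-j}{j}(-b_\phi\wp_n)^j a_\phi^{k-2j}$ and substituting $j \mapsto (k-\delta_k)/2 - j$ turns the exponent of $a_\phi$ into $2j+\delta_k$, producing the binomials $\binom{(k+\delta_k)/2+j}{2j+\delta_k}$ and the prefactor $b_\phi^{l-k-1}$. Splitting $[\mathfrak M_{\mathcal K,\mathfrak p}(\F_{\mathfrak p^n})]$ into $\mathcal N(\F_{\mathfrak p^n})$ and $\mathcal U(\F_{\mathfrak p^n})$ then separates the trace into two sums. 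On $\mathcal N$ we have $a_\phi \in \l$, so $a_\phi^{2j+\delta_k} \equiv 0 \pmod{\l^s}$ as soon as $2j+\delta_k \geq s$; the hypothesis $k \geq s-1$ guarantees that every surviving index $j \leq \lfloor (s-1-\delta_k)/2\rfloor$ lies in the range of the original sum, so the $\mathcal N$-contribution is exactly $N(k+2,l)$.

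On $\mathcal U$ the element $a_\phi$ is a unit mod $\l^s$, and here I would substitute Lemma~\ref{lem-exp} for the appeal to Lagrange's theorem in the elliptic proof: it gives $\exp\bigl((A/\l^s)^\times\bigr) = p^{\tilde{s}}(|\l|-1) = 2m_{\l,s}$ (the case $(p,s)=(2,1)$ being absorbed, since then $a_\phi^{2m_{\l,1}} = (a_\phi^{|\l|-1})^2 \equiv 1 \pmod{\l}$), whence $a_\phi^{2m_{\l,s}} \equiv 1 \pmod{\l^s}$. Grouping the indices $j$ according to their residue modulo $m_{\l,s}$ makes $a_\phi^{k-2j} \equiv a_\phi^{2r+\delta_k} \pmod{\l^s}$ constant within each class $j \equiv \lfloor k/2\rfloor - r$, and collecting the corresponding binomial terms weighted by $(-b_\phi\wp_n)^j$ yields exactly $g^{(b_\phi)}_{\lfloor k/2\rfloor - r, m_{\l,s}, k}$. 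Multiplying by $b_\phi^{l-k-1}/\#\Aut_{\F_{\mathfrak p^n}}(\phi,\lambda)$ and summing over $\mathcal U(\F_{\mathfrak p^n})$ produces $U(k+2,l)$, completing the identification.

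The proof carries no serious obstacle beyond bookkeeping, precisely because the two genuinely new inputs — invertibility of the automorphism factors and the order of the relevant unit group — are handed to us by Remark~\ref{rmk:aut=-1} and Lemma~\ref{lem-exp}. The only points demanding care are that the ``ground element'' $q$ of the elliptic case is here replaced by $b_\phi\wp_n$, which depends on the point $\phi$ through $b_\phi \in \F_q^\times$, so the generating-function data must be recorded per $\phi$ (hence the superscript in $g^{(b)}$), and that the type $l$ contributes the extra scalar $b_\phi^{l-k-1}$ that must be carried through the reindexing. Since neither interacts with the congruence manipulations, the translation of Lemma~\ref{lem:trace_formula_modulo} goes through with only these cosmetic changes.
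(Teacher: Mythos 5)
Your proposal is correct and takes exactly the paper's route: the paper's proof consists of the single remark that the lemma "is proved in the same way as Lemma~\ref{lem:trace_formula_modulo}," and you carry out precisely that translation, with Remark~\ref{rmk:aut=-1} replacing the representability/$\ell \geq 5$ argument for inverting automorphism orders and Lemma~\ref{lem-exp} replacing Lagrange's theorem in $(\mathbb Z/\ell^s\mathbb Z)^\times$. One pedantic correction: with level structure, $\#\Aut_{\mathbb F_{\mathfrak p^n}}(\phi,\lambda)$ is only a \emph{divisor} of the order of $\Aut_{\mathbb F_{\mathfrak p^n}}(\phi)$, so Remark~\ref{rmk:aut=-1} gives that it is coprime to $p$ rather than literally $\equiv -1 \pmod p$ — which is all your argument needs.
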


\subsection{The case of $\l \neq \mathfrak{p}$}

\begin{lemma}
    Assume $\mathfrak l \neq \mathfrak p$. For $k \geq s-1$, we have $N(k+2,l) \equiv_{\l^s} N(k+2+n_N,l)$ where
    \[
    n_N = \begin{cases}
        2m_{\mathfrak l,s} = p^{\tilde{s}}(|\l|-1) & \text{if } p \neq 2 \text{ and } \lambda \wp_n \text{ is a square mod }\l \text{ for all } \lambda \in \mathbb F_q^\times;\\
        2p^{\tilde{s}}(|\l|-1) & \text{otherwise.}
    \end{cases}
    \] 
\end{lemma}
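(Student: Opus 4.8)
The plan is to mirror the proof of Lemma~\ref{lem:N_periodic}, comparing the summands of $N(k+2,l)$ and $N(k+2+n_N,l)$ termwise over $[(\phi,\lambda)] \in \mathcal N(\bF_{\mathfrak p^n})$ and over $j$. First I would note that $n_N$ is even in both cases (since $|\l|-1$ is even when $p\neq 2$, while the extra factor~$2$ is present when $p=2$), so $\delta_k=\delta_{k+n_N}$ and the two sums run over the same range $0\le j\le\lfloor(s-1-\delta_k)/2\rfloor$. By Remark~\ref{rmk:aut=-1} each $\#\Aut_{\bF_{\mathfrak p^n}}(\phi,\lambda)$ is $\equiv -1\pmod p$, hence a unit in $A/\l^s A$, so it suffices to prove the congruence for each summand separately; this is where the Drinfeld setting is easier than the elliptic one, as no representability hypothesis is needed.

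Next I would record the effect of the shift $k\mapsto k+n_N$ on a single summand. Since $\l\neq\mathfrak p$ the generator $\wp_n$ is prime to $\l$, and $b_\phi\in\bF_q^\times$ is a unit; the shift therefore multiplies the factor $(-b_\phi\wp_n)^{(k-\delta_k)/2-j}b_\phi^{l-k-1}$ by the unit $u:=(-b_\phi\wp_n)^{n_N/2}b_\phi^{-n_N}$ and replaces the top entry of the binomial coefficient $\binom{(k+\delta_k)/2+j}{2j+\delta_k}$ by $(k+\delta_k)/2+j+n_N/2$. The problem thus reduces to showing, for each $j$, that $\binom{(k+\delta_k)/2+j}{2j+\delta_k}\equiv\binom{(k+\delta_k)/2+j+n_N/2}{2j+\delta_k}\,u\pmod{\l^s}$.

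The heart of the argument, and the step I expect to be the main obstacle, is showing $u\equiv 1\pmod{\l^s}$. For the factor $b_\phi^{-n_N}$ I would use that $\bF_q\hookrightarrow A\twoheadrightarrow\bF_\l$ forces $\bF_q\subseteq\bF_\l$, so that $(q-1)\mid(|\l|-1)\mid n_N$ and hence $b_\phi^{n_N}=1$ already in $\bF_q$. For the factor $(-b_\phi\wp_n)^{n_N/2}$ I would invoke Lemma~\ref{lem-exp}: in the ``otherwise'' case one has $n_N/2=p^{\tilde s}(|\l|-1)=\exp((A/\l^s A)^\times)$, so this factor is $1$ for every unit; in the remaining case ($p$ odd with every $\lambda\wp_n$ a square mod~$\l$) the element $-b_\phi\wp_n=(-b_\phi)\wp_n$ is itself a square mod~$\l$, hence, $p$ being odd, a square $w^2$ in $(A/\l^s A)^\times$ by Hensel's lemma, whence $(-b_\phi\wp_n)^{n_N/2}=w^{p^{\tilde s}(|\l|-1)}=w^{\exp}=1$.

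Finally I would handle the binomial coefficients. As $A$ has characteristic~$p$, the images of the integer binomials in $A$ depend only on their residues mod~$p$. Since $p^{\tilde s}\ge s>2j+\delta_k$ and $p^{\tilde s}\mid n_N/2$, applying Lemma~\ref{lem:lucas} with $\ell=p$, $s=1$ and iterating gives $\binom{(k+\delta_k)/2+j}{2j+\delta_k}\equiv\binom{(k+\delta_k)/2+j+n_N/2}{2j+\delta_k}\pmod p$, that is, equality in $A$. Combined with $u\equiv 1\pmod{\l^s}$ this yields the required termwise congruence mod~$\l^s$, where the remaining factor $a_\phi^{2j+\delta_k}\in\l^{2j+\delta_k}$ only improves the precision. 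Summing over $\mathcal N(\bF_{\mathfrak p^n})$ then completes the proof.
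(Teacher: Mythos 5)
Your proof is correct and follows essentially the same route as the paper's: invariance of $\delta_k$ and the $b_\phi$-powers from $\mathrm{lcm}(2,q-1)\mid n_N$, triviality of $(-b_\phi\wp_n)^{n_N/2}$ modulo $\l^s$ via Lemma~\ref{lem-exp} (with the square/Hensel argument making precise the paper's appeal to ``the assumptions''), and invariance of the binomial coefficients via Lemma~\ref{lem:lucas} in characteristic~$p$, using $2j+\delta_k\le s-1$ and $p^{\tilde s}\mid n_N/2$. The only nitpick is that Remark~\ref{rmk:aut=-1} gives $\#\Aut_{\bF_{\mathfrak p^n}}(\phi,\lambda)\equiv -1 \pmod p$ only for the automorphism group without level structure; with level structure one merely gets a divisor of such a number, which is still prime to $p$ and hence still a unit in $A/\l^sA$, so your argument is unaffected.
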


\begin{proof}
    Note that $\delta_k$ and $b_{\phi}^{-k}$ are invariant under $k \mapsto k + n_N$ since $\text{lcm}(2,q-1) \mid n_N$. By Lemma~\ref{lem-exp} and the assumptions,
    the same is true for $(-b_{\phi}\wp_n)^{k/2} \pmod{\l^s}$. It remains to show the periodicity of the binomial coefficients, which are computed in~$\bF_p$. For $s=1$, the sum in $N(k+2,l)$ is empty if $\delta_k = 1$, and the binomial coefficient is always 1 if $\delta_k = 0$.
    For $s \geq 2$, by Lemma~\ref{lem:lucas}, 
    the binomial coefficients occurring in the sum are invariant under $k \mapsto k + 2p^{1 + \lfloor \log_p( s-1 ) \rfloor}$. Since
    $\tilde{s} = \lceil \log_p(s) \rceil \geq 1 + \lfloor \log_p( s-1 ) \rfloor$, we find that the binomial coefficients are indeed invariant under $k \mapsto k + n_N$.
\end{proof}

\begin{lemma}\label{lem:series_rational_ff}
    Fix $m \geq 1$, $r \in \mathbb Z$, and $b \in \mathbb F_q^\times$. Then we have 
    \[
    \sum_{k \geq 0} g^{(b)}_{r,m,k} x^k = \frac{h(x)}{(1-x)^m - (b\wp_n x^2)^m}
    \]
    for some $h(x) \in A[x]$ of degree at most $2m-2$.
\end{lemma}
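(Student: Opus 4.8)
The plan is to mimic the proof of Lemma~\ref{lem:series_rational}, but using a single roots-of-unity filter rather than the two-variable bookkeeping there: here the selected congruence class $j \equiv r \pmod m$ does not involve $\lfloor k/2\rfloor$ and there is no parity constraint on~$k$, so the situation is actually simpler. First I would record the unfiltered identity. Expanding the geometric series,
\[
\frac{1}{1-x+b\wp_n x^2} = \sum_{n\geq 0} x^n (1-b\wp_n x)^n = \sum_{k\geq 0}\Bigl(\sum_{j=0}^{\lfloor k/2\rfloor}\binom{k-j}{j}(-b\wp_n)^j\Bigr)x^k,
\]
so that the coefficient of $x^k$ is exactly $g^{(b)}_{0,1,k}$. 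Replacing $b\wp_n$ by $\zeta^i b\wp_n$, for $\zeta$ a primitive $m$-th root of unity, multiplies the inner $j$-th term by $\zeta^{ij}$, and the orthogonality relation $\tfrac1m\sum_{i=1}^m \zeta^{i(j-r)} = [\,j\equiv r \!\!\pmod m\,]$ then isolates precisely the terms defining $g^{(b)}_{r,m,k}$. Hence
\[
\sum_{k\geq 0} g^{(b)}_{r,m,k}\,x^k = \frac1m\sum_{i=1}^m \zeta^{-ir}\,\frac{1}{1-x+\zeta^i b\wp_n x^2}.
\]

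Next I would bring the right-hand side over the common denominator $\prod_{i=1}^m(1-x+\zeta^i b\wp_n x^2)$. Writing each factor as $(1-x) - \zeta^i(-b\wp_n x^2)$ and applying Lemma~\ref{lem:rootofunity} with $A = 1-x$ and $B = -b\wp_n x^2$ yields
\[
\prod_{i=1}^m (1-x+\zeta^i b\wp_n x^2) = (1-x)^m - (-b\wp_n x^2)^m,
\]
which is the claimed denominator (the sign of the second term being $(-1)^m$: this is exactly $(1-x)^m-(b\wp_n x^2)^m$ when $m$ is even, and must be tracked with care when $m$ is odd). Multiplying the previous display through by this denominator exhibits the numerator as
\[
h(x) = \frac1m\sum_{i=1}^m \zeta^{-ir}\prod_{i'\neq i}(1-x+\zeta^{i'}b\wp_n x^2),
\]
a priori an element of $\overline{K}[x]$ of degree at most $2(m-1)=2m-2$, since it is a sum of products of $m-1$ factors each of degree~$2$, giving the asserted degree bound.

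Finally I would argue integrality, exactly as in Lemma~\ref{lem:series_rational}. Each $g^{(b)}_{r,m,k}$ lies in~$A$, and the denominator $d(x):=(1-x)^m-(-b\wp_n x^2)^m$ lies in $A[x]$ with constant term~$1$; hence the identity $h(x) = d(x)\sum_k g^{(b)}_{r,m,k}x^k$ shows $h\in A[\![x]\!]$, and combined with $h\in\overline{K}[x]$ from the previous step we conclude $h \in A[\![x]\!]\cap\overline{K}[x] = A[x]$. The only genuinely delicate point is the bookkeeping of signs in the filter: it is the factor $(-b\wp_n)^j$, rather than $(b\wp_n)^j$, that produces the $(-1)^m$ and so pins down the precise denominator. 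Everything else is a routine transcription of the elliptic argument to the characteristic-$p$ coefficient ring~$A$, the characteristic being harmless here because $\zeta$ is taken in an extension of~$\overline{K}$ and the $\tfrac1m$ is cleared before reducing to~$A$.
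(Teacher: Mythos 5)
Your overall strategy --- a roots-of-unity filter applied to the generating series $(1-x+b\wp_n x^2)^{-1}$, a common denominator computed via Lemma~\ref{lem:rootofunity}, a degree count, and an integrality argument --- is the same as the paper's. But there is a genuine gap in \emph{where} you carry it out. You work entirely in characteristic~$p$: your $\zeta$ is a primitive $m$-th root of unity ``in an extension of $\overline{K}$'' and your filter divides by $m$. When $p \mid m$, neither object exists: every extension of $\overline{K}$ has characteristic $p$, so $m = 0$ there (no $\tfrac1m$), and writing $m = p^v m_0$ with $p \nmid m_0$ one has $x^m - 1 = (x^{m_0}-1)^{p^v}$, so no element has multiplicative order $m$ at all. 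Your closing remark that ``the $\tfrac1m$ is cleared before reducing to $A$'' does not rescue this, because in your argument there is no reduction step: you substituted $b\wp_n \in A$ from the outset, so the characteristic-zero identity in which the $\tfrac1m$ could be cleared is never written down. This is not a corner case: the lemma is invoked in Lemma~\ref{lem:n_U_ff} with $m = m_{\l,s} = p^{\tilde{s}}(|\l|-1)/2$, which is divisible by $p$ as soon as $s \geq 2$. The paper's proof is designed precisely to avoid this: it runs the filter over $\mathbb Z[t]$ with $t$ a formal variable and $\zeta$ a \emph{complex} root of unity, obtains an identity of rational functions whose numerator and denominator both lie in $\mathbb Z[t][x]$ (the $\zeta$'s and the $\tfrac1m$ having cancelled in characteristic zero), and only then applies the ring map $\mathbb Z[t] \to A$, $t \mapsto b\wp_n$, which factors through reduction mod~$p$. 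Your argument is repaired by exactly this lift: carry out everything you wrote with $b\wp_n$ replaced by a formal variable $t$ over $\mathbb Z$, and specialize at the end.

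One point in your favour: your sign bookkeeping is more careful than the statement itself. The common denominator produced by the filter really is $(1-x)^m - (-b\wp_n x^2)^m$, which agrees with the displayed $(1-x)^m - (b\wp_n x^2)^m$ only for even $m$; for odd $m$ it is $(1-x)^m + (b\wp_n x^2)^m$. The paper's proof makes the same silent slip, and it is harmless downstream because Lemma~\ref{lem:n_U_ff} in effect analyzes the correct product $\prod_i (1-x-\alpha^{2i+\delta(b)}\wp_n x^2)$; still, you were right to flag the sign rather than wave it through.
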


\begin{proof}
    Replace $q$ with a formal variable $t$ in the proof of Lemma~\ref{lem:series_rational}. Starting from $h_0(t,x) = (1-x+tx^2)^{-1} \in \mathbb Z[t][\![x]\!]$ and for $\zeta$ a complex primitive $m$-th root of unity, we obtain the equality
    
    \[
    \frac{1}{m} \sum_{i=1}^m \zeta^{ir}h_0(\zeta^{-i}t,x) = \sum_{k \geq 0} \Bigg{(} \sum_{\substack{0 \leq j \leq k/2 \\ j \equiv r \pmod{m}}} \binom{k-j}{j} (-t)^j \Bigg{)}x^k = \frac{h(x)}{(1-x)^m - (tx^2)^m} 
    \]
    for some $h(x) \in \mathbb Z[t][x]$ of degree at most $2m-2$. Reducing both sides of the equation mod~$p$ and substituting $t = b\wp_n$ completes the proof.
\end{proof}

\begin{lemma}\label{lem:div_upgrade_ff}
    Let $f \in A[x]$ and $t \geq 0$. Suppose that $f$ divides $x^n - 1$ mod $\l$. Then $f^{p^t}$ divides $x^{p^tn}-1$ mod $\l^{p^t}$.
\end{lemma}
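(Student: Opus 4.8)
The plan is to exploit the structural feature that distinguishes this function-field setting from its characteristic-zero analogue (Lemma~\ref{lem:div_upgrade}): since $A$ has characteristic~$p$, the $p^t$-power map is a ring endomorphism of $A[x]$. Raising a divisibility relation to the power~$p^t$ therefore behaves perfectly, and in a single step passes from the modulus $\l$ to $\l^{p^t}$, rather than producing the error terms that forced the inductive, one-power-at-a-time argument of Lemmas~\ref{lem:(f-1)^ell} and~\ref{lem:div_upgrade}.

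First I would unwind the hypothesis into an honest equation over~$A[x]$: that $f$ divides $x^n-1$ modulo~$\l$ means there exist $h \in A[x]$ and $g \in \l A[x]$, i.e.\ a polynomial all of whose coefficients lie in~$\l$, with $x^n - 1 = f h + g$. Next I would apply the $p^t$-power Frobenius, which is additive and multiplicative in characteristic~$p$, to obtain
\[
(x^n-1)^{p^t} = f^{p^t} h^{p^t} + g^{p^t}.
\]
Two identifications then finish the argument. On the left, $(x^n-1)^{p^t} = x^{np^t} - 1$, using that $(-1)^{p^t} = -1$ in characteristic~$p$ (valid both for odd~$p$ and for $p=2$, where $-1 = 1$). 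For the remaining term, writing $g = \sum_i c_i x^i$ with $c_i \in \l$, Frobenius gives $g^{p^t} = \sum_i c_i^{p^t} x^{ip^t}$, and since each $c_i^{p^t}$ lies in~$\l^{p^t}$ we conclude $g^{p^t} \in \l^{p^t} A[x]$. Combining these yields $x^{np^t} - 1 \equiv f^{p^t} h^{p^t} \pmod{\l^{p^t}}$, which is exactly the assertion that $f^{p^t}$ divides $x^{np^t}-1$ modulo~$\l^{p^t}$.

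I do not expect a genuine obstacle here; indeed, the role of this lemma is precisely to show that the Frobenius makes the divisibility upgrade far cleaner than in characteristic zero. The only points needing minor care are the bookkeeping that the $p^t$-power map sends coefficients in~$\l$ into~$\l^{p^t}$ (a product of $p^t$ elements of~$\l$ lies in~$\l^{p^t}$) and the sign identity $(-1)^{p^t}=-1$; both are immediate.
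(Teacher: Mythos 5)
Your proof is correct and is exactly the paper's argument: the paper's entire proof reads ``This follows by applying Frobenius $t$ times,'' and your write-up simply spells out that one-line idea (applying the $p^t$-power Frobenius to $x^n-1 = fh+g$ and checking that coefficients in $\l$ land in $\l^{p^t}$ and that $(-1)^{p^t}=-1$). No discrepancy to report.
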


\begin{proof}
    This follows by applying Frobenius $t$ times.
\end{proof}

\begin{lemma}\label{lem:n_U_ff}
    Let $s \geq 1$. Then for any $r \in \mathbb Z$ and any $k \geq 0$, we have
    \[
    g^{(b)}_{\lfloor k/2 \rfloor - r,m_{\l,s},k} \equiv_{\l^{s}} g^{(b)}_{\lfloor (k+n_U)/2 \rfloor - r,m_{\l,s},k+n_U}
    \]
    for all $b \in \mathbb F_q^\times$, where
    \[
    n_U = \begin{cases}
        \text{lcm}(2,p^{\tilde{s}}(|\l|^2-1)) & \text{if } p=2;\\
        p^{\tilde{s}}(|\l|^2-1) & \text{if } p>2,\ \deg(\l) \equiv_2 0 \text{ and } \left( \frac{\wp_n}{\l} \right) = -1;\\
        p^{\tilde{s}+1}(|\l|^2-1)/2 & \text{if }p>2,\ \deg(\l) \equiv_2 0 \text{ and } \left( \frac{\wp_n}{\l} \right) = 1;\\
        p^{\tilde{s}+1}(|\l|^2-1) & \text{otherwise.}
    \end{cases}
    \]
\end{lemma}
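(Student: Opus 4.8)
The plan is to convert the asserted periodicity into a single divisibility property of the denominator produced by Lemma~\ref{lem:series_rational_ff}, and then to use the Frobenius on the characteristic-$p$ ring $A$ to pass from a congruence modulo $\l$ to one modulo $\l^s$ in one step.

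First I would record that in every case $n_U$ is even and $m_{\l,s}\mid n_U/2$; this is immediate from the definitions, using that $|\l|$ is odd for $p>2$ (so $(|\l|\pm1)/2\in\mathbb Z$) and treating $p=2$, $s=1$, where $m_{\l,1}=|\l|-1$, on its own. Since $n_U$ is even, $\lfloor(k+n_U)/2\rfloor-r=\lfloor k/2\rfloor-r+n_U/2\equiv\lfloor k/2\rfloor-r\pmod{m_{\l,s}}$, so both sides of the claimed congruence run over $j$ in the \emph{same} residue class $\rho:=\lfloor k/2\rfloor-r$ modulo $m_{\l,s}$. It therefore suffices to show that, for fixed $\rho$, the sequence $k\mapsto g^{(b)}_{\rho,m_{\l,s},k}$ is periodic modulo $\l^s$ with period $n_U$. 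By Lemma~\ref{lem:series_rational_ff} its generating series is $h(x)/d(x)$ with $d(x)=(1-x)^{m_{\l,s}}-(b\wp_n x^2)^{m_{\l,s}}$; the leading coefficient of $d$ is $\pm(b\wp_n)^{m_{\l,s}}$, a unit because $\l\neq\mathfrak p$, hence a non-zero-divisor, and $\deg h\le 2m_{\l,s}-2<\deg d$. Thus Lemma~\ref{lem:periodic_criterion} reduces the entire statement to the single divisibility $d(x)\mid x^{n_U}-1$ modulo $\l^s$.

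Next I would exploit that $A$ has characteristic $p$. Writing $m_{\l,s}=p^{e}m_0$ with $m_0$ prime to $p$ (so $m_0=(|\l|-1)/2$ and $e=\tilde s$ when $p>2$), the Frobenius identity $U^{p^e}-V^{p^e}=(U-V)^{p^e}$ gives $d(x)=D_0(x)^{p^{e}}$ with $D_0(x)=(1-x)^{m_0}-(b\wp_n x^2)^{m_0}$. By Lemma~\ref{lem:div_upgrade_ff} it is then enough to prove $D_0(x)\mid x^{n_0}-1\pmod{\l}$ for the value $n_0$ with $n_U=p^{e}n_0$: this upgrades at once to $D_0^{p^{e}}\mid x^{p^{e}n_0}-1\pmod{\l^{p^{e}}}$, and $p^{\tilde s}\ge s$ yields the divisibility modulo $\l^s$. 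This is the decisive gain over the classical Lemma~\ref{lem:denom_divisible}, where the modulus could be raised only one power of $\ell$ at a time through Lemma~\ref{lem:div_upgrade}.

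The hard part will be the analysis of $D_0$ over $\mathbb F_\l$, the characteristic-$p$ analogue of Lemma~\ref{lem:denom_divisible}. Since $m_0\mid|\l|-1$, the field $\mathbb F_\l$ contains the $m_0$-th roots of unity, and Lemma~\ref{lem:rootofunity} factors $D_0$ into quadratics $f_\sigma=1-x-\sigma x^2$ with $\sigma$ running over the coset $b\wp_n(\mathbb F_\l^\times)^2$. For each $f_\sigma$ I would read off three things: the discriminant $1+4\sigma$ decides whether the roots lie in $\mathbb F_\l$ or in $\mathbb F_{\l^2}$; the quadratic character of the norm $-\sigma^{-1}$ of the roots decides whether the $\mathbb F_{\l^2}$-roots are squares, i.e.\ whether they are $\tfrac{|\l|^2-1}{2}$-th roots of unity or only $(|\l|^2-1)$-th roots of unity; and a repeated root occurs at the unique $\sigma=-1/4$, present precisely when $-b\wp_n$ is a square modulo $\l$. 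These data are governed by $\left(\frac{\wp_n}{\l}\right)$ together with the parity of $\deg\l$ (the latter controlling whether every $b\in\mathbb F_q^\times$ is itself a square in $\mathbb F_\l^\times$), and they pin down $n_0$: in the separable case one obtains $D_0\mid x^{|\l|^2-1}-1$, whereas a repeated factor forces the use of $(x^{(|\l|^2-1)/2}-1)^p=x^{p(|\l|^2-1)/2}-1$ in characteristic $p$, which is exactly the origin of the extra factor $p$ appearing in $n_U$. The case $p=2$ (where $-1=1$, squaring is bijective on $\mathbb F_\l$, the relevant exponent in Lemma~\ref{lem-exp} is $|\l|-1$ rather than $(|\l|-1)p^{\tilde s}$, and one must take an lcm with $2$) has to be carried out separately, but along the same lines.
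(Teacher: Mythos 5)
Your plan coincides, step for step, with the paper's own proof: the reduction of the congruence to the single divisibility $d(x)\mid x^{n_U}-1\pmod{\l^s}$ via Lemmas~\ref{lem:series_rational_ff} and~\ref{lem:periodic_criterion} (with the unit leading coefficient coming from $\l\neq\mathfrak p$), the characteristic-$p$ identity $d=D_0^{p^{\tilde s}}$ combined with Lemma~\ref{lem:div_upgrade_ff} at $t=\tilde s$ and the inequality $p^{\tilde s}\geq s$, the factorization of $D_0$ into quadratics indexed by a coset of $(\bF_\l^\times)^2$ via Lemma~\ref{lem:rootofunity}, the closing observation that $n_U$ is even with $m_{\l,s}\mid n_U/2$, and a separate treatment of $p=2$ are exactly the steps the paper takes. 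Your norm criterion (the roots of an inert quadratic factor are squares in $\bF_{\l^2}^\times$ if and only if their norm is a square in $\bF_\l^\times$) is a mild streamlining of the computation the paper imports from Lemma~\ref{lem:denom_divisible}.

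The gap is the one step you leave as a sketch, namely that the quadratic-character data ``pin down $n_0$'': carried out, this step does not return the table in the statement. First, your three pieces of data collapse to one: the repeated-factor condition and the norm condition are both equivalent to $b\wp_n$ being a square in $\bF_\l^\times$ (up to your sign convention, $-b\wp_n$; the sign is immaterial below since $-1$ is a square mod $\l$ whenever $\deg(\l)$ is even), while roots of split factors lie in $\bF_\l^\times$ and are automatically squares in $\bF_{\l^2}^\times$ because $|\l|+1$ is even. Hence for a fixed $b$ the base period is $p(|\l|^2-1)/2$ when $b\wp_n$ is a square and $|\l|^2-1$ otherwise. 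Since the lemma demands one $n_U$ valid for \emph{all} $b\in\bF_q^\times$, and since for $p>2$ one has $\bF_q^\times\subseteq(\bF_\l^\times)^2$ precisely when $\deg(\l)$ is even (as $(|\l|-1)/(q-1)\equiv\deg(\l)\bmod 2$), the finer periods occur when $\deg(\l)$ is \emph{even}, governed by $\left(\frac{\wp_n}{\l}\right)$, whereas $\deg(\l)$ odd makes both cosets occur as $b$ varies and forces the least common multiple $p(|\l|^2-1)$, i.e.\ $n_U=p^{1+\tilde s}(|\l|^2-1)$. This is the opposite of the statement as printed: its parity conditions are swapped, and indeed the displayed divisibilities in the paper's own proof (with cases $\deg(\l)\equiv_2 0$) agree with the analysis above, not with the printed lemma or with Theorem~\ref{thm:trace_periodicity_ff}. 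Concretely, take $q=3$, $A=\bF_3[T]$, $\l=(T)$, $s=1$ (so $\tilde s=0$ and $m_{\l,1}=1$), $\wp_n=T+2$ (so $\deg(\l)$ is odd and $\left(\frac{\wp_n}{\l}\right)=-1$) and $b=2$: then $b\wp_n\equiv 1\pmod{T}$, the generating series is $1/(1-x+x^2)\equiv 1/(1+x)^2\pmod{T}$ with coefficients $(-1)^k(k+1)$, which are not periodic mod $T$ with the claimed period $p^{\tilde s}(|\l|^2-1)=8$; only $24=p(|\l|^2-1)$ works. So your mechanism is sound and reproduces the paper's argument, but what it proves is the parity-corrected lemma; as written, the proposal would not detect that the statement verbatim is out of reach.
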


\begin{proof} 
    Suppose first that $p \neq 2$. Write $m' = (|\l|-1)/2$ so that $m_{\l,s} = p^{\tilde{s}} m'$. Then $m'$ is coprime to $p$ and so $v_p(m_{\l,s}) = \tilde{s}$. For any $b \in \bF_q^\times$, we have $b^{m'} \in \{\pm 1\}$. Let $\delta(b) := (1-b^{m'})/2$. Let $\alpha \in A$ be any lift of a primitive $2m'$-th root of unity mod~$\l$, i.e., $\alpha \pmod{\l}$ generates $\bF_{\l}^\times$. By Lemma~\ref{lem:rootofunity}, we then have
    \[
    (1-x)^{m'} - (b\wp_nx^2)^{m'} \equiv_{\l} \prod_{i=1}^{m'} (1 - x - \alpha^{2i+\delta(b)} \wp_n x^2).
    \]
    The factors of this product are clearly pairwise coprime. We distinguish two cases.
    
    If $\deg(\l)$ is even, then $\delta(b) = 0$ for all $b$, and the product contains a square if and only if $\wp_n$ is a square mod~$\l$. In this case, each root of a factor $1-x-\alpha^{2i}\wp_nx^2$ is a square in $\bF_{\l^2}$, by the same argument given in the proof of Lemma~\ref{lem:denom_divisible}.
    
    If $\deg(\l)$ is odd, then both $\delta(b) = 0$ and $\delta(b) = 1$ occur. In one of the cases the product will contain a square.
    
    Since each irreducible factor in the product is of degree at most~2, we find that in $\bF_{\l}[x]$,
    \[
    (1-x)^{m'} - (b\wp_nx^2)^{m'} \text{ divides } \begin{cases}
        x^{|\l|^2-1} - 1 & \text{if } \deg(\l) \equiv_2 0 \text{ and } \left( \frac{\wp_n}{\l} \right) = -1; \\
        x^{p(|\l|^2-1)/2} -1 & \text{if } \deg(\l) \equiv_2 0 \text{ and } \left( \frac{\wp_n}{\l} \right) = 1; \\
        x^{p(|\l|^2-1)} - 1 &  \text{if } \deg(\l) \equiv_2 1.
    \end{cases}
    \]
    Applying Lemma~\ref{lem:div_upgrade_ff} with $t = \tilde{s}$ together with Lemmas~\ref{lem:series_rational_ff} and~\ref{lem:periodic_criterion} gives
    \[
    g^{(b)}_{r,m_{\l,s},k} \equiv_{\l^s} g^{(b)}_{r,m_{\l,s},k+n_U},
    \]
    for any $k \geq 0$, $b \in \mathbb F_q^\times$ and any $r$.  Since $n_U$ is even and $n_U/2 \equiv 0 \pmod{m_{\l,s}}$, this implies the statement of the lemma (when $p \neq 2$).

    If $p=2$ then $m_{\l,1} = |\l|-1$ and $m_{\l,s} = p^{\tilde{s}-1}m_{\l,1}$ if $s \geq 2$. By Lemma~\ref{lem:series_rational_ff}, the denominator we are interested in is (the $p^{\tilde{s}-1}$-th power of) $d(x) := (1-x)^{|\l|-1}-(b\wp_n x^2)^{|\l|-1}$.
    If $\alpha \in A$ is a lift of a primitive element mod $\l$, then $d(x) \equiv_{\l} \prod_i (1-x+\alpha^i \wp_n x^2)$, which is a product of pairwise coprime polynomials, none of which are square. Thus $d(x)$ divides $x^{|\l|^2-1}-1$ mod $\l$, which implies $g^{(b)}_{r,m_{\l,s},k} \equiv_{\l^s} g^{(b)}_{r,m_{\l,s},k+p^{\tilde{s}}(|\l|^2-1)}$ in the usual way.
    Since $p^{\tilde{s}}(|\l|^2-1)$ divides $n_U$ and $n_U/2 \equiv_{m_{\l,s}} 0$, we are done.
\end{proof}

\begin{corollary}\label{cor:n_U_ff}
    We have $U(k+2,l) \equiv_{\l^s} U(k+2+n_U,l)$.
\end{corollary}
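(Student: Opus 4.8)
The plan is to prove the congruence summand by summand: for each class $[(\phi,\lambda)] \in \mathcal U(\bF_{\mathfrak p^n})$ and each $0 \le r < m_{\l,s}$, I will show that the term in the sum defining $U(k+2+n_U,l)$ is congruent mod $\l^s$ to the term of $U(k+2,l)$ indexed by the same pair $([(\phi,\lambda)],r)$. The key input, Lemma~\ref{lem:n_U_ff}, already supplies the periodicity of the coefficients $g^{(b_\phi)}$ in the weight, so only the auxiliary factors $a_\phi^{2r+\delta_k}$, $b_\phi^{l-k-1}$, and $\#\Aut_{\mathbb F_{\mathfrak p^n}}(\phi,\lambda)$ need to be checked for invariance under $k \mapsto k+n_U$. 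In contrast to the number-field setting of Lemma~\ref{lem:U_periodic}, no decomposition into $\l$-adic digits is required here, because Lemma~\ref{lem:n_U_ff} yields periodicity modulo the full power $\l^s$ — a consequence of the Frobenius upgrade in Lemma~\ref{lem:div_upgrade_ff}.

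First I would record that $n_U$ is even in every case of its definition (for $p>2$ because $|\l|$ is odd, so $|\l|^2-1$ is even; for $p=2$ by construction). Hence $\delta_{k+n_U} = \delta_k$ and $\lfloor (k+n_U)/2\rfloor = \lfloor k/2\rfloor + n_U/2$, so the factor $a_\phi^{2r+\delta_k}$ is literally unchanged, while Lemma~\ref{lem:n_U_ff} gives
\[
g^{(b_\phi)}_{\lfloor k/2\rfloor - r,\, m_{\l,s},\, k} \equiv_{\l^s} g^{(b_\phi)}_{\lfloor (k+n_U)/2\rfloor - r,\, m_{\l,s},\, k+n_U}
\]
for every $b_\phi \in \mathbb F_q^\times$ and every $r$. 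This matches the coefficient factors of the two sums term by term.

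It remains to handle the type twist $b_\phi^{l-k-1}$, which acquires an extra factor $b_\phi^{-n_U}$ under $k \mapsto k+n_U$, and the automorphism factor. Since $\mathbb F_q^\times$ is a subgroup of $(A/\l)^\times$, whose order is $|\l|-1$, we have $(q-1)\mid |\l|-1$ and therefore $(q-1)\mid |\l|^2-1$; this divides $n_U$ in every case except the halved one $n_U = p^{1+\tilde s}(|\l|^2-1)/2$, where $p>2$ forces $|\l|$ odd, so that $(|\l|^2-1)/2 = (|\l|-1)(|\l|+1)/2$ remains divisible by $q-1$. Thus $b_\phi^{n_U}=1$ and the type factor is invariant. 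Finally, $\#\Aut_{\mathbb F_{\mathfrak p^n}}(\phi,\lambda)$ divides the order of $\Aut_{\mathbb F_{\mathfrak p^n}}(\phi)$, which is prime to $p$ by Remark~\ref{rmk:aut=-1} and hence a unit mod $\l^s$, and it is the same on both sides. Assembling these observations gives the termwise congruence, and summing over $[(\phi,\lambda)]$ and $r$ yields $U(k+2,l) \equiv_{\l^s} U(k+2+n_U,l)$. I expect the only genuine obstacle to be the divisibility $(q-1)\mid n_U$ in the halved case; everything else is a direct comparison of the factors.
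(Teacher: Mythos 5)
Your proposal is correct and follows essentially the same route as the paper: evenness of $n_U$ gives $\delta_{k+n_U}=\delta_k$, divisibility $(q-1)\mid n_U$ gives $b_\phi^{n_U}=1$, and Lemma~\ref{lem:n_U_ff} supplies the periodicity of the coefficients $g^{(b_\phi)}$, from which the congruence follows termwise via Lemma~\ref{lem:trace_formula_modulo_ff}. The only difference is that you spell out the verification of $(q-1)\mid n_U$ in the halved case $p^{1+\tilde s}(|\l|^2-1)/2$, which the paper leaves implicit.
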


\begin{proof}
    Since $n_U$ is even, we have $\delta_k = \delta_{k+n_U}$. Moreover, $b_{\phi}^{n_U} = 1$ since $n_U$ is divisible by $q-1$. Hence the statement follows immediately from Lemmas~\ref{lem:trace_formula_modulo_ff} and~\ref{lem:n_U_ff}.
\end{proof}

We now conclude the periodicity of the traces.

\begin{proof}[Proof of Theorem~\ref{thm:trace_periodicity_ff} if $\l \neq \mathfrak p$]
    The period of the trace is $L := \text{lcm}(n_N,n_U)$. If $p=2$, then $L = 2p^{\tilde{s}}(|\l|^2-1) = p^{\tilde{s}+1}(|\l|^2-1)$. If $p \neq 2$, note that if $\wp_n$ is a square mod~$\l$ and $\deg(\l)$ is even, then $\lambda \wp_n$ is a square mod~$\l$ for all $\lambda \in \bF_q^\times$. Hence $n_N \mid n_U$ in all cases, so $L = n_U$.
\end{proof}

\subsection{The case of $\l = \mathfrak p$}
Suppose now that $\l = \mathfrak p$. Then we have the following periodicity of the traces.

\begin{theorem}
    For all $k \geq k_0$, we have
    \[
    \Tr(\T_{\mathfrak p}^n \, | \, \S_{k+2,l}({\mathcal K})) \equiv_{\mathfrak p^s} \Tr(\T_{\mathfrak p}^n \, | \, \S_{k+2+p^{\tilde{s}}(|\l|-1),l}({\mathcal K})),
    \]
    where $k_0 = 2s-1$ if $n = 1$ and $k_0 = s$ if $n \geq 2$.
\end{theorem}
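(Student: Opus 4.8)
The plan is to mirror the elliptic $\l=p$ argument of Section~\ref{sec:ell=p}, with the role of $a=v_\ell(q)$ there played by $n=v_{\mathfrak p}(\wp_n)$ here. Concretely, I would invoke Lemma~\ref{lem:trace_formula_modulo_ff} to write $\Tr(\T_{\mathfrak p}^n \mid \S_{k+2,l}(\mathcal K)) \equiv_{\mathfrak p^s} -N(k+2,l) - U(k+2,l)$ for $k \geq s-1$, and then show separately that $N(k+2,l)$ vanishes for $k \geq k_0$ and that $U(k+2,l)$ is periodic with period $m := p^{\tilde s}(|\l|-1)$. The key simplification, absent when $\l \neq \mathfrak p$, is that $\l = \mathfrak p$ forces $v_{\mathfrak p}(b_\phi\wp_n) = n \geq 1$ (recall $b_\phi \in \mathbb F_q^\times$ is a unit mod $\mathfrak p$), so $(-b_\phi\wp_n)^j$ supplies exactly the $\mathfrak p$-adic valuation needed for Lemma~\ref{lem:lucas} to close the argument directly, and no analogue of the rationality/regrouping machinery (Lemmas~\ref{lem:series_rational_ff}--\ref{lem:n_U_ff}) is required. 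Throughout, $\#\Aut_{\bF_{\mathfrak p^n}}(\phi,\lambda)$ is a unit mod $\mathfrak p$ by Remark~\ref{rmk:aut=-1}.

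For the vanishing of $N$, fix $[(\phi,\lambda)] \in \mathcal N(\bF_{\mathfrak p^n})$, so that $v_{\mathfrak p}(a_\phi) \geq 1$. Each summand of $N(k+2,l)$ (as displayed in Lemma~\ref{lem:trace_formula_modulo_ff}) then has $\mathfrak p$-adic valuation at least $n\bigl((k-\delta_k)/2 - j\bigr) + (2j+\delta_k)$, the two contributions coming from the power of $\wp_n$ and the power of $a_\phi$. This lower bound is $\geq k$ when $n \geq 2$ and $\geq (k+\delta_k)/2$ when $n = 1$; in either case it is $\geq s$ exactly once $k \geq k_0$ (with $k_0 = s$, resp.\ $k_0 = 2s-1$), giving $N(k+2,l) \equiv_{\mathfrak p^s} 0$ and reproducing the weight bounds in the statement.

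For the periodicity of $U$, I would establish, for every $b \in \mathbb F_q^\times$ and every residue $\rho$, the congruence $g^{(b)}_{\rho, m_{\l,s}, k} \equiv_{\mathfrak p^s} g^{(b)}_{\rho, m_{\l,s}, k+m}$. Since $v_{\mathfrak p}\bigl((b\wp_n)^j\bigr) = nj$, every term of $g^{(b)}_{\rho,m_{\l,s},k}$ with $nj \geq s$ vanishes mod $\mathfrak p^s$, and the surviving terms satisfy $j \leq nj < s \leq p^{\tilde s}$, whence $p^{1+\lfloor \log_p j\rfloor} \mid p^{\tilde s} \mid m$. As $m$ is even with $m/2 = m_{\l,s}$, the shift $k \mapsto k+m$ preserves both $\delta_k$ and the residue class of the summation index, so the relevant difference is $\sum_j \bigl(\binom{k+m-j}{j} - \binom{k-j}{j}\bigr)(-b\wp_n)^j$ over the surviving $j$; by Lemma~\ref{lem:lucas} each binomial difference is $\equiv 0 \pmod p$, hence literally zero in the characteristic-$p$ ring $A$. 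Multiplying by $a_\phi^{2r+\delta_k} b_\phi^{l-k-1}/\#\Aut_{\bF_{\mathfrak p^n}}(\phi,\lambda)$, summing over $0 \leq r < m_{\l,s}$ and over $\mathcal U(\bF_{\mathfrak p^n})$, and using $b_\phi^{m} = 1$ (as $q-1 \mid |\l|-1 \mid m$) together with $\delta_{k+m} = \delta_k$, yields $U(k+2,l) \equiv_{\mathfrak p^s} U(k+2+m,l)$ term by term (the exponent $2r+\delta_k$ of $a_\phi$ is unchanged, so no periodicity of $a_\phi$-powers is even needed).

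Combining the two parts via Lemma~\ref{lem:trace_formula_modulo_ff} gives the theorem for $k \geq k_0$. The parity input ``$m$ even'' used above holds whenever $p$ is odd or $s \geq 2$ (so that $\tilde s \geq 1$); the one remaining case $(p,s) = (2,1)$ I would dispatch directly, since mod $\mathfrak p$ the trace formula of Theorem~\ref{thm:dmf_traceform_general} collapses to its $j=0$ term (because $\wp_n \equiv_{\mathfrak p} 0$), giving $\Tr \equiv_{\mathfrak p} -\sum_{[(\phi,\lambda)]} a_\phi^{k} b_\phi^{l-k-1}/\#\Aut_{\bF_{\mathfrak p^n}}(\phi,\lambda)$, whose periodicity with period $m = |\l|-1$ is immediate from $a_\phi^{k+m}=a_\phi^k$ in $\bF_\l$ for $k \geq 1$ (as $a_\phi^{|\l|-1}\in\{0,1\}$) and $b_\phi^{m}=1$. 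I expect the main obstacle to be the bookkeeping in the $U$ step: matching the surviving index sets for $k$ and $k+m$ and verifying that $p^{1+\lfloor \log_p j\rfloor} \mid m$ for every contributing $j$. This is precisely the point where the choice $\tilde s = \lceil \log_p s\rceil$ (rather than $s-1$, as in the elliptic case) is dictated, and where the valuation supplied by $\wp_n \equiv_{\mathfrak p} 0$ is indispensable.
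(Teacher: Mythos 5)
Your proposal is correct and follows essentially the same route as the paper's proof: the same decomposition via Lemma~\ref{lem:trace_formula_modulo_ff}, the same valuation count showing $N(k+2,l)\equiv_{\mathfrak p^s}0$ for $k\geq k_0$, and the same periodicity argument for $U(k+2,l)$ obtained by truncating $g^{(b)}_{r,m,k}$ to the terms with $nj<s$ and applying Lemma~\ref{lem:lucas} modulo~$p$ (the paper phrases this as invariance under $k\mapsto k+p^{\tilde s}$ and then takes the multiple $p^{\tilde s}(|\l|-1)$ to fix the parity, residue class, and $b_\phi$-powers, exactly as you do in one step). Your separate treatment of $(p,s)=(2,1)$, where the trace collapses to its $j=0$ term, also matches the paper's handling of the case $p^s=2$.
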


\begin{proof}
    Clearly $N(k+2,l) \equiv_{\mathfrak p^s} 0$ for all $k \geq 2s-1$ if $n=1$ and $k \geq s$ if $n \geq 2$.

    We now establish the period of $U(k+2,l)$. For all $k \geq 2\lfloor (s-1)/n \rfloor$ and $b \in \mathbb F_q^\times$, we have
    \[
    g^{(b)}_{r,m,k} = \sum_{\substack{0 \leq j \leq \lfloor k/2 \rfloor \\ j \equiv r \pmod{m}}} \binom{k-j}{j} (-b\wp_n)^j \equiv_{\mathfrak p^s} \sum_{\substack{0 \leq j \leq s-1 \\ j \equiv r \pmod{m}}} \binom{k-j}{j} (-b\wp_n)^j.
    \]
    By Lemma~\ref{lem:lucas}, 
    all binomial coefficients which occur are invariant under $k \mapsto k + p^{\tilde{s}}$, which implies that 
    \[
    g^{(b)}_{r,m,k} \equiv_{\mathfrak p^s} g^{(b)}_{r,m,k+p^{\tilde s}}
    \]
    for all $k \geq 2\lfloor (s-1)/n \rfloor$, $m \geq 1$, $b \in \mathbb F_q^\times$ and $r \in \mathbb Z$.

    Now let $N = p^{\tilde{s}}(|\l|-1)$ and $b \in \mathbb F_q^\times$. If $p^s \neq 2$ then $N = 2m_{\mathfrak p,s}$, and by the above,
    \[
    g^{(b)}_{\lfloor k/2 \rfloor - r,m_{\mathfrak p,s},k} \equiv_{\l^s} g^{(b)}_{\lfloor (k+N)/2 \rfloor - r,m_{\mathfrak p,s},k+N}.
    \]
    Moreover, since $q-1$ divides $N$ we have $b^N = 1$ and since $N$ is even we have $\delta_{k+N} = \delta_k$ for all $k$. Hence it follows from Lemma~\ref{lem:trace_formula_modulo_ff} that $U(k+2,l) \equiv_{\mathfrak p^s} U(k+2+N,l)$. Finally if $p^s = 2$ then one sees directly that
    \[
    U(k+2,l) \equiv_{\l} \sum_{[(\phi,\lambda)] \in \mathcal U(\mathbb F_{\mathfrak p^n})} \frac{a_{\phi}^{k}b_{\phi}^{l-k-1}}{\# \Aut_{\mathbb F_{\mathfrak p^n}}(\phi,\lambda)},
    \]
    which is also invariant under $k \mapsto k + |\l|-1$.
\end{proof}

\subsection{Traces modulo \texorpdfstring{$\infty$}{infty}}
Let $\mathcal O_\infty = \{x \in K \ | \ v_\infty(x) \geq 0\}$ be the local ring of $\infty$-integral elements in $K$. Fix a generator $\pi_\infty$ of its maximal ideal and recall that we assume $\mathcal O_\infty / (\pi_\infty) = \mathbb F_q$.

Define
\[
\Tr_\infty(\T^n_{\mathfrak p} \, | \, \S_{k+2,l}({\mathcal K})) := \frac{1}{(-\wp_n)^{\lceil k/2 \rceil}} \Tr(\T^n_{\mathfrak p} \, | \, \S_{k+2,l}({\mathcal K})).
\]
Then $\Tr_\infty(\T^n_{\mathfrak p} \, | \, \S_{k+2,l}({\mathcal K})) \in \mathcal O_\infty$; this follows from Theorem~\ref{thm:dmf_traceform_general} and the Riemann hypothesis for Drinfeld modules over finite fields, which says that $-2v_\infty(a_{\phi}) \leq -v_\infty(\wp_n)$, see \cite[Theorem~5.1]{gek_finite}. If $A = \bF_q[T]$, then $v_\infty = -\deg_T$ and the strong Ramanujan bound \cite[Conjecture 4.3]{devries_traces} is equivalent to the statement
\[
\Tr_\infty(\T^n_{\mathfrak p} \, | \, \S_{k+2,l}(\mathrm{GL}_2(A))) \equiv 0 \pmod{\pi_\infty^{n\deg(\mathfrak p)(q-1)/2}} \qquad \text{for all } k,l \in \mathbb Z.
\]
Working in $\mathcal O_\infty$ instead of $A$, we can argue as before to show that $\Tr_\infty$ is periodic in the weight modulo powers of $\pi_\infty$. We sketch the argument here.
Define
\begin{align*}
\mathcal N_\infty(\bF_{\mathfrak p^n}) &= \left\{ [(\phi,\lambda)] \in [\mathfrak M_{{\mathcal K},\mathfrak p}( \bF_{\mathfrak p^n} )] \, \bigg{|} \, -v_\infty(a_{\phi}) < \frac{-v_\infty(\wp_n)}{2} \right \};\\
\mathcal U_\infty(\bF_{\mathfrak p^n}) &= [\mathfrak M_{{\mathcal K},\mathfrak p}( \bF_{\mathfrak p^n} )] \setminus \mathcal N_\infty(\bF_{\mathfrak p^n}).
\end{align*}
Then we have, for all $k \geq s-1$,
\[
\Tr_\infty(\T_{\mathfrak p}^n \, | \, \S_{k+2,l}({\mathcal K})) \equiv_{\pi_\infty^s} -N_\infty(k+2,l) - U_\infty(k+2,l),
\]
where 
\begin{align*}
    N_\infty(k+2,l) &:= \sum_{[(\phi,\lambda)] \in \mathcal N_\infty(\bF_{\mathfrak p^n})} \sum_{j=0}^{\lfloor \frac{s-1-\delta_k}{2}\rfloor} \binom{(k+\delta_k)/2+j}{2j+\delta_k} \left( \frac{a_{\phi}^{2j+\delta_k}}{-\wp_n^{2j+\delta_k}}\right)\frac{b_{\phi}^{l-j-(k+\delta_k)/2-1}}{\# \Aut_{\mathbb F_{\mathfrak p^n}}(\phi,\lambda)};\\
    U_\infty(k+2,l) &:= \sum_{[(\phi,\lambda)] \in \mathcal U_\infty(\bF_{\mathfrak p^n})} \sum_{r=0}^{m_{\infty,s}-1} h^{(b_{\phi})}_{\lfloor k/2 \rfloor - r,m_{\infty,s},k} \left( \frac{a_{\phi}^{2j+\delta_k}}{-\wp_n^{2j+\delta_k}}\right) \frac{b_{\phi}^{l-k-1}}{\# \Aut_{\mathbb F_{\mathfrak p^n}}(\phi,\lambda)}.
\end{align*}

Here we write $m_{\infty,s} := p^{\tilde{s}}(q-1)/2$ (unless $p^s = 2$, in which case $m_{\infty,s} := q-1$), and
\[
h^{(b)}_{r,m,k} := \sum_{\substack{0 \leq j \leq \lfloor k/2 \rfloor \\ j \equiv r \pmod{m}}} \binom{k-j}{j} b^j.
\]

Using the same arguments as before, we obtain:

\begin{proposition} For all $k \geq s-1$, 
    \[
    \Tr_\infty(\T_{\mathfrak p}^n \, | \, \S_{k+2,l}(\mathcal K)) \equiv_{\pi_\infty^s} 
    \Tr_\infty(\T_{\mathfrak p}^n \, | \, \S_{k+2+p^{1+\tilde{s}}(q^2-1),l}(\mathcal K)).
    \]
\end{proposition}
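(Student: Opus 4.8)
The plan is to mirror the proof in the case $\l \neq \mathfrak p$, replacing the $\l$-adic valuation by the $\infty$-adic one and the residue field $\bF_{\l}$ by the residue field $\mathbb F_q$ of~$\infty$. The starting point is the decomposition $\Tr_\infty \equiv_{\pi_\infty^s} N_\infty(k+2,l) + U_\infty(k+2,l)$ recorded above, which is the $\infty$-adic analogue of Lemma~\ref{lem:trace_formula_modulo_ff}: it follows from Theorem~\ref{thm:dmf_traceform_general} and Lemma~\ref{lemma-hformula}, after using the Riemann hypothesis for Drinfeld modules over finite fields \cite[Theorem~5.1]{gek_finite} to split $[\mathfrak M_{{\mathcal K},\mathfrak p}(\bF_{\mathfrak p^n})]$ into the two regimes $\mathcal N_\infty$ and $\mathcal U_\infty$. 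The observation driving the split is that on $\mathcal U_\infty$ the Riemann bound is sharp, so $a_{\phi}^2/\wp_n$ is an $\infty$-unit, whereas on $\mathcal N_\infty$ the inequality is strict, so $a_{\phi}^2/\wp_n$ lies in the maximal ideal $(\pi_\infty)$. It then remains to prove that $N_\infty$ and $U_\infty$ are each periodic modulo $\pi_\infty^s$ with period $p^{1+\tilde s}(q^2-1)$.

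For $N_\infty$, since $a_{\phi}^2/\wp_n \in (\pi_\infty)$ on $\mathcal N_\infty$, the powers appearing in the inner sum vanish modulo $\pi_\infty^s$ once the index exceeds roughly~$s$, so the sum is effectively finite, exactly as in Lemma~\ref{lem:N_periodic} and the $\l=\mathfrak p$ argument. Each surviving term is a product of a binomial coefficient, a power of $b_{\phi} \in \mathbb F_q^\times$, and a power of $\wp_n$ (a unit after the $(-\wp_n)^{-\lceil k/2\rceil}$-normalisation). Under $k \mapsto k + p^{1+\tilde s}(q^2-1)$ the binomial coefficients are invariant by Lemma~\ref{lem:lucas} (since $2p^{\tilde s} \mid p^{1+\tilde s}(q^2-1)$), the $b_{\phi}$-powers are invariant because $q-1 \mid p^{1+\tilde s}(q^2-1)$, and the $\wp_n$-powers are invariant because $\exp\bigl((\mathcal O_\infty/\pi_\infty^s)^\times\bigr) = p^{\tilde s}(q-1)$ by the $\mathbb F_q$-analogue of Lemma~\ref{lem-exp}; the period is even, so $\delta_k$ is preserved as well. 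Hence $N_\infty(k+2,l) \equiv_{\pi_\infty^s} N_\infty(k+2+p^{1+\tilde s}(q^2-1),l)$.

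For $U_\infty$ I would follow Lemma~\ref{lem:n_U_ff} and Corollary~\ref{cor:n_U_ff}. First I would record the generating function of the auxiliary sums $h^{(b)}_{r,m,k}$ (the $\infty$-adic counterpart of Lemma~\ref{lem:series_rational_ff}), namely $\sum_k h^{(b)}_{r,m,k} x^k = \tilde h(x)/\bigl((1-x)^m - (bx^2)^m\bigr)$ with $\tilde h \in A[x]$. Taking $m = m_{\infty,s} = p^{\tilde s}(q-1)/2$, I would factor the denominator modulo $\pi_\infty$ over the residue field $\mathbb F_q$ using Lemma~\ref{lem:rootofunity} into pairwise-coprime factors of degree at most~$2$ whose roots lie in $\mathbb F_{q^2}$, conclude that it divides $x^{p^{\tilde s}(q^2-1)}-1$ modulo $\pi_\infty$, upgrade this to a divisibility of $x^{p^{1+\tilde s}(q^2-1)}-1$ modulo $\pi_\infty^s$ via Lemma~\ref{lem:div_upgrade_ff}, and finally invoke Lemma~\ref{lem:periodic_criterion} to obtain $h^{(b)}_{r,m,k} \equiv_{\pi_\infty^s} h^{(b)}_{r,m,k+p^{1+\tilde s}(q^2-1)}$ for every $b \in \mathbb F_q^\times$. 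Decomposing the unit $a_{\phi}^2/\wp_n$ into its $\mathbb F_q$-digits exactly as in Lemma~\ref{lem:U_periodic} then yields the periodicity of $U_\infty$, and combining with the previous paragraph finishes the proof.

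The main obstacle will be the $U_\infty$ step, and specifically the uniformity over~$b$: as $b = b_{\phi}$ ranges over $\mathbb F_q^\times$ both square and non-square values occur, so unlike in Lemma~\ref{lem:n_U_ff} I cannot pass to the smaller period of a favourable Legendre-symbol case and must instead take the common worst-case period. This is precisely what forces the factor $p^{1+\tilde s}$ rather than $p^{\tilde s}$ in $p^{1+\tilde s}(q^2-1)$. A secondary, bookkeeping-level point is to verify carefully that the $(-\wp_n)^{-\lceil k/2\rceil}$-normalisation turns the relevant Frobenius quantities into genuine elements of $\mathcal O_\infty$, using the Riemann hypothesis, so that reductions modulo $\pi_\infty^s$ make sense and the regimes $\mathcal N_\infty$, $\mathcal U_\infty$ behave as the $\mathcal N$, $\mathcal U$ regimes do in the number-field case.
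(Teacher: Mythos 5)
Your overall plan coincides with the paper's own proof, which, after setting up the decomposition $\Tr_\infty \equiv_{\pi_\infty^s} N_\infty(k+2,l)+U_\infty(k+2,l)$, consists precisely of the instruction ``use the same arguments as before'': Lucas (Lemma~\ref{lem:lucas}) and exponent divisibilities for $N_\infty$, and the generating-function argument (Lemmas~\ref{lem:series_rational_ff}, \ref{lem:rootofunity}, \ref{lem:div_upgrade_ff}, \ref{lem:periodic_criterion}) for $U_\infty$, with uniformity in $b\in\bF_q^\times$ dictating the worst-case period. Two inaccuracies in your $N_\infty$ paragraph are harmless: after the $(-\wp_n)^{-\lceil k/2\rceil}$-normalisation the factors involving $a_\phi$ and $\wp_n$ do not depend on $k$ at all beyond its parity, so nothing like Lemma~\ref{lem-exp} is needed for them (and on $\mathcal N_\infty$ these factors are non-units, so that appeal would be misplaced anyway); also, for the $b_\phi$-powers in $N_\infty$ one needs $(q-1)\mid n/2$ rather than $(q-1)\mid n$, which does hold for $n=p^{1+\tilde{s}}(q^2-1)$.

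The genuine problem is in your $U_\infty$ divisibility chain, which as written fails. For $p$ odd, write $D_b(x)=(1-x)^{m_{\infty,s}}-(bx^2)^{m_{\infty,s}}$; in characteristic $p$ this equals $d_b^{p^{\tilde{s}}}$ with $d_b=(1-x)^{(q-1)/2}-(bx^2)^{(q-1)/2}$, and it is $d_b$, not $D_b$, that Lemma~\ref{lem:rootofunity} factors into pairwise coprime polynomials of degree at most~$2$. Your intermediate claim that $D_b$ divides $x^{p^{\tilde{s}}(q^2-1)}-1=(x^{q^2-1}-1)^{p^{\tilde{s}}}$ modulo $\pi_\infty$ is, by comparing multiplicities in $\bF_q[x]$, equivalent to $d_b\mid x^{q^2-1}-1$, i.e.\ to $d_b$ being squarefree; this fails for exactly one of the two quadratic-residue classes of $b$, for which $d_b$ contains the square $(1-x/2)^2$, and you cannot exclude such $b_\phi$. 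Moreover, even granting that claim, applying Lemma~\ref{lem:div_upgrade_ff} as you propose gives $D_b^{p^t}\mid x^{p^{t+\tilde{s}}(q^2-1)}-1$ modulo $\pi_\infty^{p^t}$, and reaching the modulus $\pi_\infty^{s}$ forces $t=\tilde{s}$, i.e.\ a period $p^{2\tilde{s}}(q^2-1)$ rather than the claimed $p^{1+\tilde{s}}(q^2-1)$. The correct chain --- which your closing paragraph describes in words but which never enters the argument --- places the extra factor of $p$ at the residue level: every irreducible factor of $d_b$ has multiplicity at most $2\leq p$ and roots in $\bF_{q^2}^\times$, so $d_b\mid(x^{q^2-1}-1)^p=x^{p(q^2-1)}-1$ modulo $\pi_\infty$ uniformly in $b$, and then $D_b=d_b^{p^{\tilde{s}}}$ divides $x^{p^{1+\tilde{s}}(q^2-1)}-1$, either via Lemma~\ref{lem:div_upgrade_ff} with $t=\tilde{s}$ (modulo $\pi_\infty^{p^{\tilde{s}}}$, hence modulo $\pi_\infty^{s}$), or more simply exactly in $\bF_q[x]\subset\mathcal O_\infty[x]$: unlike at a finite prime $\l$, the denominator here has constant coefficients (no $\wp_n$ occurs in it), so no upgrade lemma is needed at all. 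With that substitution your argument goes through and matches the paper.
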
 

In particular, this reduces the verification of the strong Ramanujan bound for each $\mathfrak p^n$ to a finite computation:

\begin{corollary}
    Fix a maximal ideal $\mathfrak p \trianglelefteq \bF_q[T]$ and an integer $n \geq 1$. Let $s = n\deg(\mathfrak p)(q-1)/2$ and $\tilde{s} = \lceil \log_p(s)\rceil$. Then the strong Ramanujan bound holds for $\T_{\mathfrak p}^n$ if it holds on the spaces $\S_{k+2,l}$ with $1 \leq l \leq q-1$ and $0 \leq k < p^{1 + \tilde{s}}(q^2-1) + s$.
\end{corollary}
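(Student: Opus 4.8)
The plan is to combine the two periodicities at our disposal---periodicity in the type~$l$ and periodicity in the weight~$k$---to reduce the strong Ramanujan bound for $\T_{\mathfrak p}^n$ to the finitely many spaces listed.

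Recall that the strong Ramanujan bound asserts that $\Tr_\infty(\T_{\mathfrak p}^n \mid \S_{k+2,l}(\mathrm{GL}_2(A))) \equiv 0 \pmod{\pi_\infty^s}$ for all $k \geq 0$ and all $l$, where $s = n\deg(\mathfrak p)(q-1)/2$. First I would reduce in the type: by the definition of Drinfeld modular forms, one has $\S_{k+2,l} = \S_{k+2,l'}$ whenever $l \equiv l' \pmod{q-1}$, and this identification is Hecke-equivariant for the normalised operator~$\T_{\mathfrak p}$ (cf.\ item~1 of the renormalisation discussion). Hence it suffices to verify the bound for $l$ ranging over the representatives $1 \leq l \leq q-1$.

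Next I would reduce in the weight using the preceding Proposition, which gives, for each fixed~$l$, $\Tr_\infty(\T_{\mathfrak p}^n \mid \S_{k+2,l}) \equiv_{\pi_\infty^s} \Tr_\infty(\T_{\mathfrak p}^n \mid \S_{k+2+P,l})$ for every $k \geq s-1$, with $P := p^{1+\tilde{s}}(q^2-1)$. Thus the residue of $\Tr_\infty$ modulo $\pi_\infty^s$ is periodic of period~$P$ on the range $k \geq s-1$, so its vanishing there is equivalent to its vanishing on a single full period, say $s-1 \leq k < s-1+P$. The remaining weights $0 \leq k < s-1$, where the Proposition does not apply, must be checked individually. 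The union of these two blocks of weights is contained in $0 \leq k < P+s$, yielding the claimed finite set of spaces $\S_{k+2,l}$ with $1 \leq l \leq q-1$.

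This is a purely formal bookkeeping reduction, so no analytic difficulty arises; the only point that needs care is that the weight periodicity of the Proposition is valid only for $k \geq s-1$, so the small-weight block $0 \leq k < s-1$ cannot be reduced periodically and must be incorporated into the finite check. Once this is observed, confirming that the non-periodic block together with one full period fits inside $0 \leq k < P+s$ is immediate.
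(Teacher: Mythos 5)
Your proof is correct and follows the same route the paper intends: the paper states this corollary as an immediate consequence of the preceding proposition, using exactly your two reductions (type-periodicity modulo $q-1$ via the Hecke-equivariant identification $\S_{k,l}=\S_{k,l'}$, and weight-periodicity with period $p^{1+\tilde{s}}(q^2-1)$ for $k \geq s-1$, plus a direct check of the small weights $k < s-1$). Your observation that the non-periodic block and one full period fit inside $0 \leq k < p^{1+\tilde{s}}(q^2-1)+s$ is precisely the bookkeeping that justifies the stated range.
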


\section{Examples}\label{sec:examples}

\subsection{Elliptic modular forms of level $1$ for small $\ell^s$}
\label{seq-congruence}
For any $i \geq 1$ put $h_i(x)=x^{k-i} \, \prod_{j=1}^i(x-j)=x^k+\sum_{j=1}^i c_{i,j} x^{k-j} \in \mathbb Z[x]$ for some $c_{i,j} \in \mathbb Z$. For any integer $x_0$ and prime $\ell$, we see that 
\[
\prod_{j=1}^i (x-j) \equiv_{\ell^{t_i}} 0 \;\; \text{where} \;\; t_i= v_{\ell}(i!).
\]
Let $N\geq 1$ and let $H$ be a subgroup of $\mathrm{GL}_2(\mathbb Z/N\mathbb Z)$. It follows that if either $H$ is representable or $\ell \geq 5$ that 
\begin{equation} \label{eq-faculty}
\sum_{[(E,\phi)] \in [\mathcal{Y}_{H}(\bF_q)]} \frac{h_i(a_1(E))}{\# \Aut_{\bF_q}(E,\phi)}\equiv_{\ell^{t_i}} [a_1^k] +\sum_{j=1}^i c_{i,j} \, [a_1^{k-j}] \equiv_{\ell^{t_i}} 0.
\end{equation}
Following the proof in Section~\ref{sec-notrepresentable}, we see that if $H$ is not representable and $\ell=2$ or $3$, then Equation~\eqref{eq-faculty} holds but with $t_i$ replaced by $t_i+\nu_\ell(H,q)$ and that if $\ell=2$ and $-\mathrm{Id}\in H$ then $t_i$ can instead be replaced by $t_i-1+\nu_2(H,q)$.

For fixed $i$ and $\ell$, knowing $[a_1^j]$ modulo $\ell^{t_i}$ for all $j \leq i$, we can use Equation~\eqref{eq-faculty} to compute $[a_1^k]$ modulo $\ell^{t_i}$ for all $k$. 

For $\Gamma=\Gamma(1)$, $\mathrm{eis}_{\Gamma(1),k}(q)=1$ for all $q$ and even $k \geq 0$. We can use the fact that $S_{k}(\Gamma(1))=\{0\}$ for all $k<12$ together with Theorem~\ref{theorem-trace-Sk} to compute $[a_1^{k}]$ for all $q$ and all $k <10$.
We find that for any $q$ we have,  
\begin{align*}
[a_1^0]&=q, \\
[a_1^2]&=q^2-1 \\
[a_1^4]&=2q^3-3q-1\\
[a_1^6]&=5q^4-9q^2-5q-1 \\
[a_1^8]&=14q^5-28q^3-20q^2-7q-1.
\end{align*}

Using the results of Section~\ref{seq-congruence} together with \cite[Theorem~10.1]{silverman}, we find that 
 \[
 [a_1^{10+2i}] \equiv_{\ell^{r_{\ell}(p)}} 870\cdot [a_1^{8+2i}]+63273\cdot[a_1^{6+2i}]+723680\cdot[a_1^{4+2i}]+1026576\cdot[a_1^{2+2i}]
 \]
for any $q=p^r$ and $i \geq 0$, where $r_7(p)=1,r_5(p)=2,r_3(p)=3$ for any prime~$p$, and where $r_2(2)=6$ and $r_2(p)=7$ for any $p \neq 2$ (and $r_{\ell}(p) = 0$ for any $\ell \geq 11$).
Using this we can recursively compute $[a_1^k]$ modulo $\ell^{r_{\ell}(p)}$ for any $k \geq 0$. 

 \begin{example}\label{ex:elliptic_wt28}
 Using the results above we computed the following examples: 
\begin{align*}
\Tr(F_q \, | \, S[\Gamma(1),28]) &\equiv_{5^2} 15q^{14}+15q^{13}+22q^{12}+19q^{10}+10q^9+19q^8+22q^7+7q^6+21q^5\\ &+10q^4+4q^3+10q^2+5q,\\
\Tr(F_q \, | \, S[\Gamma(1),28]) &\equiv_{3^3} 9q^{12}+18q^{10}+26q^9+21q^7+20q^6+6q^5+18q^4+3q^3+18q^2,\\
\Tr(F_q \, | \, S[\Gamma(1),28]) &\equiv_{2^{r_2(p)}} 92q^{14}+24q^{13}+26q^{12}+3q^{11}+6q^{10}+96q^9+7q^8+74q^7+108q^6\\&+24q^5+31q^4+37q^3+116q^2,
\end{align*} 
for any $q=p^r$.
Note that by virtue of Theorem~\ref{thm:trace_periodicity}, these congruences also hold in weights $28+kn$ for all $k \geq 1$, for a suitable~$n$. Thus, doing a similar computation for a finite number of weights gives polynomial expressions for traces modulo $\ell^s$ in all weights, for $\ell^s \in \{5^2,3^3,2^{r_2(p)}\}$.

For results with $\ell \leq 7$, $s=1$ and $\ell \neq p$, see  
\cite[Theorem~2a]{CFW}.
  \end{example}

\begin{example} For $\ell=11$ we are just outside the range mentioned above, and the issue will be that $a_1(E)^{\ell-1}\equiv_{\ell}0$ for any $[(E,\phi)] \in \mathcal N(\mathbb F_q)$.
    For any $q=p^r$ we find that 
    \[
    \Tr(F_q \, | \, S[\Gamma(1),12]) \equiv_{11} \# \mathcal N(\mathbb F_q)+9q^6+9q^4+2q^3+9q^2+q+10.
    \]
    Note that for any prime $p \geq 5$ we have that 
    \[
    \# \mathcal N(\mathbb F_p)=\frac{1}{2} \left(H(-4p)+\sum_{i=1}^{\lfloor 2\sqrt{p}/11\rfloor} H \left((11i)^2-4p \right)\right),
    \]
    where $H(\Delta)$ denotes the Kronecker class number of $\Delta$, see \cite[Theorem 4.6]{Schoof}. 
\end{example}

\subsection{Congruence equations over function fields}
\begin{example}
    When $A = \mathbb F_q[T]$, $\mathcal{K} = \mathrm{GL}_2(\hat{A})$, and $\l$ is a prime of degree 1, the traces of Hecke operators mod~$\l$ are known: for $\alpha \in \bF_q$ and $\mathfrak p \neq (T-\alpha)$ with monic generator~$\wp$, we have \cite[Cor.~3.3]{joshi-petrov}
    \[
    \Tr(\T_{\mathfrak p}^n \, | \, \S_{k,l}) \equiv \wp(\alpha)^{n(l-1)} \cdot \dim \S_{k,l} \pmod{(T-\alpha)}.
    \]
    Since $\dim \S_{k,l} = \lfloor \frac{k + (q-1-l)(q+1)}{q^2-1} \rfloor$ if $1 \leq l \leq q-1$ and $k \equiv_{q-1} 2l$, we see that the trace is periodic mod $\l$ with period $p(q^2-1)$. In particular, the period from Theorem~\ref{thm:trace_periodicity_ff} is optimal in this case.
\end{example}

\begin{example}
    In the setting of the previous example, the Drinfeld cusp form of lowest weight which does not possess an $A$-expansion is $gh^2 \in \S_{3q+1,2}(\mathrm{GL}_2(\hat{A}))$. One can use this to show that the elements $[c_{k,l}]$ from Theorem~\ref{thm:dmf_traceform_general} lie in $\bF_p \subset \bF_q[T]$ for $k < 3q-1$. 
    
    In particular, if $q=3$ and $\l$ is a prime of degree~1, we have $a^{k+6} \equiv_{\l^2} a^{k}$ for all $k \geq 2$ and all $a \in A$. It then follows from Theorem~\ref{thm:dmf_traceform_general} that for all $k$ and~$l$, $\Tr(\T_{\mathfrak p}^n \, | \, \S_{k+2,l}(\mathrm{GL}_2(\bF_3[T])))\pmod{\l^2}$ is a polynomial in $\wp^n$ over~$\bF_3$. In particular, this trace depends only on $\wp^n \pmod{\l^2}$, analogously to Example~\ref{ex:elliptic_wt28}.
    
    Consider for instance the case $\l = (T)$, so we consider traces of Hecke operators modulo~$T^2$. Assume $k$ is even (as otherwise $\S_{k,l} = 0$). Let $l \in \{0,1\}$ and write $d_{k,l} := \dim \S_{k,l} = \lfloor \frac{k+4l}{8} \rfloor$. Then we have for any $\mathfrak p$ with monic generator $\wp \neq T$ and any $n \geq 1$,
    \[
    \Tr(\T_{\mathfrak p}^n \, | \, \S_{k,l}) \equiv \wp^{n(1-l)} \left(a_{k,l} + (d_{k,l} - a_{k,l})\wp^{2n}\right) \pmod{T^2},
    \]
    where
    \[
    a_{k,l} = \begin{cases}
        -l -\left\lfloor \frac{k-4l}{24}\right\rfloor & \text{if } d_{k,l} \equiv_3 0;\\
        (l-1)k + l -\left\lfloor \frac{k-4l}{24}\right\rfloor & \text{if } d_{k,l} \equiv_3 1;\\
        (-1)^{l+1}k + l -\left\lfloor \frac{k-4l}{24}\right\rfloor & \text{if } d_{k,l} \equiv_3 2.
    \end{cases}
    \]
    Using Theorem~\ref{thm:trace_periodicity_ff}, this could be verified with a finite number of computations.
\end{example}

\subsection{Hecke polynomials}
Fix a subgroup $H \subseteq \mathrm{GL}_2(\mathbb Z/N\mathbb Z)$ and a prime $p \geq 5$ which does not divide~$N$. As before, write $S_k(H) = \bigoplus_{i=1}^d S_k(\Gamma_i)$, where the $\Gamma_i$ are the congruence subgroups such that $\mathcal Y_H(\mathbb C) \cong \coprod_{i=1}^d \Gamma_i \backslash \mathcal{H}$. Define for any $k \geq 0$,
\[
\mathfrak f_{k+2}(x) := \det(1-T(p)x \ | \ S_{k+2}(H)) \in \mathbb Z[x].
\]

\begin{proposition}\label{prop:pols}
    Suppose $H$ is representable. 
    Then $\f_{k}(x) \equiv_p \f_{k+p-1}(x)$ for all $k \geq 3$.
\end{proposition}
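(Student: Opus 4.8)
The plan is to pass from the characteristic polynomials to the power sums of the $T(p)$-eigenvalues, to deduce periodicity of those power sums directly from Theorem~\ref{thm:trace_periodicity}, and only then to upgrade this to periodicity of the polynomials themselves. Writing $S_k(H) = \bigoplus_i S_k(\Gamma_i)$ and letting $a_p(f_i)$ run over the $T(p)$-eigenvalues, we have $\mathfrak{f}_k(x) = \prod_i(1 - a_p(f_i)x)$, so the relevant power sums are $t_n := \sum_i a_p(f_i)^n = \Tr(T(p)^n \mid S_k(H))$. By Remark~\ref{rmk:frob_vs_hecke} one has $\Tr(T(p)^n \mid S_k(H)) \equiv_{p^{k-1}} \Tr(F_{p^n} \mid S[H,k])$, hence $t_n \equiv_p \Tr(F_{p^n} \mid S[H,k])$ for $k \geq 3$. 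Applying Theorem~\ref{thm:trace_periodicity} with $\ell = p$, $s = 1$ and $q = p^n$ (so the relevant parameter is $k-2 \geq 1 \geq k_0$, the period equals $p^{s-1}(p-1) = p-1$, and $\epsilon_{k-2} = 0$ since $k \geq 3$), I obtain $\Tr(F_{p^n} \mid S[H,k]) \equiv_p \Tr(F_{p^n} \mid S[H,k+p-1])$ for every $n \geq 1$. Hence all power sums are periodic: $t_n \equiv_p t_n'$ for all $n$, where $t_n'$ is the power sum in weight $k+p-1$.

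Next I would translate this into information about the polynomials via the logarithmic derivative. Over $\mathbb{F}_p$ one has the identity $x\,\mathfrak{f}_k'(x)/\mathfrak{f}_k(x) = -\sum_{n \geq 1} t_n x^n$ (which requires no division and so is valid mod~$p$), and likewise in weight $k+p-1$. Since the right-hand sides agree mod~$p$, the logarithmic derivatives of $\mathfrak{f}_k$ and $\mathfrak{f}_{k+p-1}$ coincide mod~$p$; as a power series with $h(0)=1$ and $h'=0$ in characteristic $p$ is a series in $x^p$, this yields $\mathfrak{f}_k \equiv_p \mathfrak{f}_{k+p-1}\cdot g(x^p)$ for some rational $g$ with $g(0)=1$. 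This is exactly as far as the power sums can take us: they determine the characteristic polynomial only up to a factor that is a function of $x^p$, i.e.\ they cannot detect eigenvalue multiplicities modulo~$p$.

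The main obstacle is therefore to show that $g = 1$, and for this I would step outside the power-sum framework and use multiplication by the Hasse invariant. For $p \geq 5$, $p \nmid N$ and $H$ representable, the Hasse invariant $A$ is a mod~$p$ form of weight $p-1$ with $q$-expansion $1$, and multiplication by $A$ gives an injection $\overline{S}_k(H) \hookrightarrow \overline{S}_{k+p-1}(H)$ of mod~$p$ cusp forms that is equivariant for $T(p)$ (which acts as $U(p)$ on $q$-expansions in weight $\geq 2$, hence commutes with the identity on $q$-expansions induced by $A$). Consequently $\det(1 - T(p)x \mid \overline{S}_k(H))$ divides $\det(1 - T(p)x \mid \overline{S}_{k+p-1}(H))$, and I expect the quotient to be trivial because $T(p)=U(p)$ acts nilpotently on the cokernel of $A$: the reciprocal characteristic polynomial is insensitive to the nilpotent (non-ordinary) part, while multiplication by $A$ is an isomorphism on the ordinary parts, whose dimensions are constant in each residue class modulo $p-1$.

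Establishing this nilpotency — equivalently, that the ordinary parts in weights $k$ and $k+p-1$ are isomorphic as $U(p)$-modules — is the crux of the argument and the point where the mod~$p$ theory of $U(p)$, rather than Theorem~\ref{thm:trace_periodicity}, does the essential work. The power-sum computation of the first two paragraphs then serves as a consistency check that also explains why the period is precisely $p-1$ (in agreement with the $s=1$ case of the Gouv\^ea--Mazur congruence recalled in Remark~\ref{rmk:padic}); it pins down everything except the $p$-power ambiguity $g(x^p)$, which the Hasse-invariant argument removes.
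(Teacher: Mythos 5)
Your first two paragraphs are sound: Theorem~\ref{thm:trace_periodicity} with $\ell=p$, $s=1$, $q=p^n$ does give periodicity of all the power sums $\Tr(T(p)^n\,|\,S_k(H)) \bmod p$, and you are right (and commendably honest) that in characteristic $p$ the power sums determine $\mathfrak f_k$ only up to a factor $g(x^p)$, so they cannot prove the proposition by themselves. But this means the entire burden falls on your third paragraph, and that is precisely where the argument stops being a proof. The claim that $U(p)$ acts nilpotently on $\coker\bigl(A\colon \overline{S}_k(H) \to \overline{S}_{k+p-1}(H)\bigr)$ is never established: you justify it by asserting that the ordinary subspaces in weights $k$ and $k+p-1$ have the same dimension, but that assertion is Hida's control theorem --- which is exactly the statement the paper recovers as Corollary~\ref{cor:GM} \emph{from} Proposition~\ref{prop:pols}. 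Inside this paper your argument is therefore circular, and even as standalone mathematics it imports the deep result it is meant to help reprove. The gap is fillable without Hida, by the filtration theory of mod~$p$ forms: from $V(p)U(p)f = f - \theta^{p-1}f$ on $q$-expansions, $w(\theta f) \leq w(f)+p+1$ and $w(V(p)g)=p\,w(g)$, one gets $w(U(p)f) \leq p + (w(f)-1)/p$, which for $k \geq 3$ forces $w(U(p)f) < k+p-1$; since filtrations in this weight are congruent to $k$ modulo $p-1$, this gives $w(U(p)f)\leq k$, i.e.\ $U(p)\,\overline{S}_{k+p-1}(H) \subseteq A\cdot \overline{S}_k(H)$, so $U(p)$ is actually zero on the cokernel. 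None of this is in your write-up, and ``I expect the quotient to be trivial'' is not a substitute for it.

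For comparison, the paper's proof sidesteps the multiplicity problem entirely by working with the full reciprocal characteristic polynomial rather than its traces: $\det(1-F_px\,|\,S[H,k+2])$ is written as an Euler product over the closed points $(E,\phi)$ of $\mathcal X_H \otimes \mathbb F_p$, and since $\pi_E\bar{\pi}_E = p^n \equiv_p 0$, each local factor collapses modulo $p$ to $(1-a_1(E)^kx^n)^{-1}$ (respectively $(1-(\pm1)^kx^n)^{-1}$ at the cusps), which is periodic in $k$ with period $p-1$ by Fermat's little theorem since $a_1(E) \in \mathbb Z$. This yields the congruence of polynomials in one step, with no Hasse invariant, no ordinary projector, and no filtration theory --- the structural point you missed is that the trace formula refines to a product formula over points, and it is that refinement, not Theorem~\ref{thm:trace_periodicity}, which sees multiplicities.
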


\begin{proof}
If $\mathcal{X}_H$ is a scheme, and $k \geq 1$, then the trace formula can equivalently be written as
\[
\det(1-F_px \, | \, S[H,k+2]) = \prod_{n \geq 1} \prod_{(E,\phi)/\mathbb F_{p^n}} \det(1-F_{p^n}x^n \, | \, \Sym^k(H^1(E \otimes \overline{\bF}_q, \mathbb Q_{\ell'})))^{-1},
\]
where the product is over the closed points $(E,\phi) \in |\mathcal{X}_H \otimes \mathbb F_p|$ of degree~$n$. If $(E,\phi)$ is a cusp, then 
\[
\det(1-F_{p^n}x^n \, | \, \Sym^k(H^1(E \otimes \overline{\bF}_q, \mathbb Q_{\ell'})))^{-1} = (1 - (\pm 1)^{k} x^n)^{-1},
\]
depending on whether $E$ is of split or non-split multiplicative type. In either case, since $p$ is odd, this factor is invariant under $k \mapsto k + p-1$.

If $(E,\phi) \in |\mathcal{Y}_H \otimes \mathbb F_p|$, we have
\[
\det(1-F_{p^n}x^n \, | \, \Sym^k(H^1(E\otimes \overline{\bF}_q, \mathbb Q_{\ell'})))^{-1} = \prod_{i=0}^k (1- \pi_E^i \bar{\pi}_E^{k-i}x^n)^{-1}, 
\]
where $X^2 - a_1(E) X + p^n = (X-\pi_E)(X-\bar{\pi}_E)$. In particular, $\pi_E \bar{\pi}_E = p^n$ and hence
\begin{align*}
\det(1-F_{p^n}x^n \, | \, \Sym^k(H^1(E\otimes \overline{\bF}_q, \mathbb Q_{\ell'})))^{-1} &\equiv_{p} (1-(\pi_E^k + \bar{\pi}_E^k)x^n)^{-1} \\
&\equiv_{p} (1 - (\pi_E + \bar{\pi}_E)^kx^n)^{-1} = (1-a_1(E)^kx^n)^{-1}.
\end{align*}
Since $a_1(E) \in \mathbb Z$, this is invariant mod $p$ under $k \mapsto k + p-1$, provided that $k \geq 1$.

Finally,
\[
\det(1-F_px \, | \, S[H,k+2]) = \prod_{i=1}^d (1 - a_p(f_i)x + p^{k-1}x^2) \equiv_p \det(1-T(p)x \, | \, S_{k+2}(H)). \qedhere
\]
\end{proof}

\begin{remark}
    If $H$ is not representable, it is in some cases still possible to deduce the conclusion of the proposition by applying Newton's identities to the traces of $T(p)$ on the spaces $W_k(H) = \tl{S}_{k+p-1}(H)/E_{p-1}\tl{S}_k(H)$, where $\tl{S}_k(H)$ is the reduction modulo~$p$ of $S_k(H)$ and $E_{p-1}$ denotes the Eisenstein series of weight~$p-1$, which satisfies $E_{p-1} \equiv_p 1$. Since Newton's identities involve denominators, one needs to assume that $p > \dim_{\mathbb F_p} W_k(H)$; this holds for instance if $H$ corresponds to $\Gamma_1(N)$ with $N \leq 3$
    (and $\Gamma_1(N)$ is representable for $N \geq 4$, see \cite[Cor.~2.7.3]{KatzMazur}).
\end{remark}

The same proof works for Drinfeld modular forms, which gives the following result. Note that $\mathcal K$ is representable if it is admissible in the sense of~\cite{bockle}.

\begin{proposition}
    Suppose $\mathcal K$ is representable. Then 
    \[
    \det(1-\T_{\mathfrak p}x \, | \, \S_{k,l}(\mathcal{K})) \equiv_{\mathfrak p} \det(1-\T_{\mathfrak p}x \, | \, \S_{k+|\mathfrak p|-1,l}(\mathcal K))
    \]
    for any $k \geq 3$ and $l \in \mathbb Z$. 
\end{proposition}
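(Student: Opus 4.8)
The plan is to transcribe the Euler-product argument from the proof of Proposition~\ref{prop:pols} into the Drinfeld setting, with the congruence mod~$\mathfrak p$ playing the role of the congruence mod~$p$ there. Representability of $\mathcal K$ guarantees that $\mathfrak M_{\mathcal K,\mathfrak p}$ is a scheme, so that $\#\Aut_{\mathbb F_{\mathfrak p^n}}(\phi,\lambda)=1$ and its $\mathbb F_{\mathfrak p^n}$-points organize into Galois orbits, i.e.\ closed points. First I would pass from the renormalised trace formula \eqref{eq:traceformula_dmf_renormalised} to a product expansion of the characteristic polynomial. Writing
\[
\det(1-\T_{\mathfrak p}x \mid \S_{k+2,l}(\mathcal K)) = \exp\bigl(-\textstyle\sum_{n\ge 1}\Tr(\T_{\mathfrak p}^n\mid \S_{k+2,l})\,x^n/n\bigr),
\]
substituting \eqref{eq:traceformula_dmf_renormalised}, and grouping $\mathbb F_{\mathfrak p^n}$-points into orbits yields
\[
\det(1-\T_{\mathfrak p}x \mid \S_{k+2,l}(\mathcal K)) = \prod_{x_0 \in |\mathfrak M_{\mathcal K,\mathfrak p}|}\ \prod_{i=0}^{k}\bigl(1 - \pi_{x_0}^{i}\bar\pi_{x_0}^{k-i}\,b_{x_0}^{l-k-1}\,x^{\deg x_0}\bigr)^{-1},
\]
the product running over closed points, where $\pi_{x_0},\bar\pi_{x_0}$ are the Frobenius roots at $x_0$ and $b_{x_0}\in\mathbb F_q^\times$ satisfies $\pi_{x_0}\bar\pi_{x_0}=b_{x_0}\wp_{\deg x_0}$. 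Whenever a closed point of degree $d$ exists, $\mathfrak p^{d}$ is principal by \cite[Cor.~5.2]{gek_finite}, so $\wp_{\deg x_0}$ is defined. Two features simplify matters relative to the elliptic case: there is no separate cuspidal/Eisenstein term in the Drinfeld trace formula, and no Frobenius-versus-Hecke discrepancy to resolve since $\T'_{\mathfrak p^n}=(\T'_{\mathfrak p})^n$ (cf.\ Remark~\ref{rmk:frob_vs_hecke}).

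Next I would reduce a single local factor modulo~$\mathfrak p$. Since $\wp_{\deg x_0}$ generates $\mathfrak p^{\deg x_0}\subseteq\mathfrak p$, we have $\pi_{x_0}\bar\pi_{x_0}\equiv 0 \pmod{\mathfrak p}$. The coefficients of $\prod_{i=0}^k(1-\pi_{x_0}^i\bar\pi_{x_0}^{k-i}t)$ are symmetric in $\pi_{x_0},\bar\pi_{x_0}$, hence lie in $A$, and for $j\ge 2$ every coefficient of $t^j$ is divisible by $\pi_{x_0}\bar\pi_{x_0}$ and so vanishes mod~$\mathfrak p$ (using $k\ge1$). The surviving linear coefficient is $h_k(\pi_{x_0},\bar\pi_{x_0})\equiv \pi_{x_0}^k+\bar\pi_{x_0}^k\equiv (\pi_{x_0}+\bar\pi_{x_0})^k = a_{x_0}^k \pmod{\mathfrak p}$, the last step again by vanishing of the mixed binomial terms. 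Thus each local factor collapses to $(1 - a_{x_0}^k\,b_{x_0}^{l-k-1}\,x^{\deg x_0})^{-1}\pmod{\mathfrak p}$, with $a_{x_0}\in A$ and $b_{x_0}\in\mathbb F_q^\times$.

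The final step is to verify that each reduced factor is invariant mod~$\mathfrak p$ under $k\mapsto k+|\mathfrak p|-1$, i.e.\ that $a_{x_0}^{k}b_{x_0}^{l-k-1}\equiv a_{x_0}^{k+|\mathfrak p|-1}b_{x_0}^{l-k-|\mathfrak p|}\pmod{\mathfrak p}$. The $b$-part is exact: since $|\mathfrak p|-1=q^{\deg\mathfrak p}-1$ is a multiple of $q-1$ and $b_{x_0}\in\mathbb F_q^\times$, we have $b_{x_0}^{|\mathfrak p|-1}=1$. For the $a$-part, either $a_{x_0}\equiv 0\pmod{\mathfrak p}$, in which case both sides vanish as $k\ge1$, or $a_{x_0}\bmod\mathfrak p\in\mathbb F_{\mathfrak p}^\times$ and Fermat gives $a_{x_0}^{|\mathfrak p|-1}\equiv1\pmod{\mathfrak p}$. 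Hence the whole product, and therefore the characteristic polynomial, is invariant mod~$\mathfrak p$ under $k\mapsto k+|\mathfrak p|-1$; reindexing so that the weight $w=k+2\ge 3$ gives the stated congruence.

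I expect the only genuine work to be the bookkeeping in the first step: correctly tracking the type twist $b_\phi^{l-k-1}$ as points base-change along a Galois orbit (where the base of the twist is replaced by $b_{x_0}^m$ over $\mathbb F_{\mathfrak p^{dm}}$ while the exponent $l-k-1$ stays fixed), and checking that these assemble into the displayed Euler product. Once the product is in hand the reduction and periodicity are immediate and run exactly parallel to the elliptic argument.
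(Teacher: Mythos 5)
Your overall strategy --- an Euler product over closed points of $\mathfrak M_{\mathcal K,\mathfrak p}$, collapse of each local factor modulo $\mathfrak p$ using $\pi_{x_0}\bar\pi_{x_0}=b_{x_0}\wp_{\deg x_0}\equiv_{\mathfrak p}0$, then Fermat for the $a$-part and $b_{x_0}^{q-1}=1$ for the type twist --- is exactly the paper's, which transposes the proof of Proposition~\ref{prop:pols} to the Drinfeld setting. However, your very first step has a genuine gap: the identity
\[
\det(1-\T_{\mathfrak p}x \mid \S_{k+2,l}(\mathcal K)) = \exp\Bigl(-\sum_{n\geq 1}\Tr(\T_{\mathfrak p}^n\mid \S_{k+2,l}(\mathcal K))\,x^n/n\Bigr)
\]
is not available here. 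Drinfeld modular forms, their Hecke operators, and the values of the trace formula all live in characteristic~$p$, so the division by $n$ is undefined whenever $p\mid n$; and no reformulation can repair this, because in characteristic $p$ the traces of all powers of an operator do not determine its characteristic polynomial (the $p\times p$ identity and zero matrices have identical power traces, as do the identity and any unipotent operator). This is precisely the difficulty the paper itself flags in the remark following Proposition~\ref{prop:pols}, where the denominators in Newton's identities force an extra hypothesis when passing between traces and characteristic polynomials modulo~$p$. Since your proposal takes only the trace formula \eqref{eq:traceformula_dmf_renormalised} as input, it cannot, even in principle, reach a statement about $\det(1-\T_{\mathfrak p}x)$.

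The correct route --- and what lies behind the paper's assertion that ``the same proof works'' --- is that in B\"ockle's Eichler--Shimura theory the space $\S_{k+2,l}(\mathcal K)$ is realized as the cohomology of a crystal on the Drinfeld modular curve, and the B\"ockle--Pink/Anderson trace formula for crystals yields the $L$-function identity directly at the level of characteristic polynomials; under representability it reads
\[
\det(1-\T_{\mathfrak p}x \mid \S_{k+2,l}(\mathcal K)) = \prod_{x_0 \in |\mathfrak M_{\mathcal K,\mathfrak p}|}\ \prod_{i=0}^{k}\bigl(1-\pi_{x_0}^{i}\bar\pi_{x_0}^{k-i}b_{x_0}^{l-k-1}x^{\deg x_0}\bigr)^{-1},
\]
and the pointwise trace formula \eqref{eq:traceformula_dmf} quoted in the paper is extracted from that theory, not the other way around. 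Once this identity is taken as input, the remainder of your argument --- the reduction of each local factor to $(1-a_{x_0}^k b_{x_0}^{l-k-1}x^{\deg x_0})^{-1}$ modulo $\mathfrak p$, the bookkeeping of the type twist along Galois orbits, the use of $(q-1)\mid(|\mathfrak p|-1)$, and the observation that both sides vanish when $a_{x_0}\equiv_{\mathfrak p}0$ because $k\geq 1$ --- is correct and agrees with the paper's proof.
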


Define a \emph{$p$-adic slope} of $T(p)$ to be the normalized $p$-adic valuation of a root of $\f_{k}(x) \in \mathbb Z_p[x]$, and denote by $d_p(k,\alpha)$ its multiplicity. Similarly, for Drinfeld modular forms, denote by $d_{\mathfrak p}(k,l,\alpha)$ the multiplicity of the $\mathfrak p$-adic slope $\alpha$ on $\S_{k,l}(\mathcal{K})$. Retaining the representability assumption on the level, we deduce a dimension periodicity for slope~0.

\begin{corollary}\label{cor:GM}
    If $k \geq 3$, then $d(k,0) = d(k+p-1,0)$ and $d_{\mathfrak p}(k,l,0) = d_{\mathfrak p}(k+|\mathfrak p|-1,l,0)$.
\end{corollary}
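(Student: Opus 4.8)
The plan is to reduce the statement to the two congruences of characteristic (Hecke) polynomials just established, by showing that the slope-$0$ multiplicity of such a polynomial is simply the degree of its reduction. First I would record the elementary fact underlying everything: if $f(x) \in \mathbb{Z}_p[x]$ has unit constant term and factors as $f(x) = \prod_i (1 - \lambda_i x)$ with all $\lambda_i$ integral over $\mathbb{Z}_p$, then the number of roots of $f$ of valuation $0$ — equivalently, the number of indices $i$ with $v_p(\lambda_i) = 0$ — equals the degree of the reduction $f \bmod p \in \mathbb{F}_p[x]$. Indeed, grouping the factors according to $v_p(\lambda_i)$, every factor with $v_p(\lambda_i) > 0$ reduces to $1$, whereas each of the $d_p(k,0)$ factors with $v_p(\lambda_i) = 0$ reduces to a genuine linear polynomial with nonzero leading coefficient over the residue field; since the residue field is a domain, the product has degree exactly $d_p(k,0)$.

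I would then apply this to $\f_k(x) = \det(1 - T(p)x \mid S_k(H)) = \prod_i (1 - a_p(f_i)x)$. The eigenvalues $a_p(f_i)$ are algebraic integers, hence integral after fixing an embedding $\overline{\mathbb{Q}} \hookrightarrow \overline{\mathbb{Q}}_p$, and the constant term of $\f_k$ is $1$, so the fact above gives $d_p(k,0) = \deg(\f_k \bmod p)$. The first assertion is now immediate: Proposition~\ref{prop:pols} gives $\f_k(x) \equiv_p \f_{k+p-1}(x)$ for $k \geq 3$, and congruent polynomials have reductions of equal degree, so $d_p(k,0) = \deg(\f_k \bmod p) = \deg(\f_{k+p-1} \bmod p) = d_p(k+p-1,0)$.

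The Drinfeld statement I would prove identically. Here the relevant base ring is the completion $A_{\mathfrak p}$, with residue field $\mathbb{F}_{\mathfrak p}$, and the $\T_{\mathfrak p}$-eigenvalues are again integral, so the same argument yields $d_{\mathfrak p}(k,l,0) = \deg(\det(1 - \T_{\mathfrak p}x \mid \S_{k,l}(\mathcal{K})) \bmod \mathfrak p)$; the Drinfeld analogue of Proposition~\ref{prop:pols} then gives the claimed periodicity for $k \geq 3$.

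I do not expect a genuine obstacle here, as the content lies entirely in the bookkeeping. The one point demanding care is the convention: one must confirm that the eigenvalues (equivalently, the reciprocal roots of the Hecke polynomial) are integral, so that no factor acquires negative valuation and the degree of the reduction counts precisely the unit-slope roots. It is also worth remarking explicitly why only the slope $0$ is accessible: a congruence modulo the maximal ideal detects only the degree of the reduction, i.e. the slope-$0$ multiplicity, whereas the higher-slope multiplicities would require congruences modulo higher powers of $p$ (resp.\ $\mathfrak p$), exactly as in the Gouv\^ea--Mazur picture of Remark~\ref{rmk:padic}.
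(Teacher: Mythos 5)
Your proposal is correct and follows essentially the same route as the paper: both reduce the corollary to Proposition~\ref{prop:pols} (and its Drinfeld analogue) via the identification of the slope-$0$ multiplicity with the degree of the characteristic polynomial modulo $p$ (resp.\ $\mathfrak p$). The only difference is cosmetic: the paper cites the theory of Newton polygons for this identification, while you prove it directly by factoring $\f_k(x)=\prod_i(1-\lambda_i x)$ and checking integrality of the eigenvalues.
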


\begin{proof}
By the theory of Newton polygons, $d_p(k,0)$ equals the degree of $\f_k(x) \pmod{p}$, and $d_{\mathfrak p}(k,l,0)$ equals the degree of $\det(1-\T_{\mathfrak p}x \, | \, \S_{k,l}(\mathcal K)) \pmod{\mathfrak p}$.
\end{proof}

In particular, with the exception of $k=2$, this recovers Hida's result for elliptic modular forms that the Gouv\^ea--Mazur conjecture holds for slope 0.

\begingroup
\sloppy
 \printbibliography
\endgroup

\end{document}